
\documentclass[final,leqno,onefignum,onetabnum]{siamltex1213}

\usepackage{showkeys}
\usepackage{amssymb,amsmath}
\usepackage{algorithm}
\usepackage{algorithmic}

\newtheorem{remark}{\bf  Remark}[section]
\newtheorem{assumption}{\bf  Assumption}

% Math Sets
\newcommand{\bbE}{{\mathbb{E}}}
\newcommand{\bbR}{{\mathbb{R}}}
\newcommand{\bbN}{{\mathbb{N}}}
\newcommand{\bbJ}{{\mathbb{J}}}
\newcommand{\bbP}{{\mathbb{P}}}

\newcommand{\cB}{\mathcal{B}}

\newcommand{\cF}{\mathcal{F}}

\newcommand{\cN}{\mathcal{N}}

\newcommand{\cR}{\mathcal{R}}

\newcommand{\cQ}{\mathcal{Q}}
\newcommand{\cS}{\mathcal{S}}

\newcommand{\argmax}{\operatornamewithlimits{argmax}}
\newcommand{\argmin}{\operatornamewithlimits{argmin}}

\newcommand{\bsnull}{{\boldsymbol 0}}

\newcommand{\bsgamma}{{\boldsymbol{\gamma}}}

\newcommand{\bstau}{\boldsymbol{\tau}}

\newcommand{\bsnu}{{\boldsymbol{\nu}}}
\newcommand{\bsmu}{{\boldsymbol{\mu}}}

\newcommand{\bsl}{\boldsymbol{l}}

\newcommand{\bse}{{\boldsymbol{e}}}

\newcommand{\bsk}{{\boldsymbol{k}}}

\newcommand{\bsy}{{\boldsymbol{y}}}

\newcommand{\bszero}{{\boldsymbol{0}}}

\newcommand{\beq}{\begin{equation}}
\newcommand{\eeq}{\end{equation}}
\newcommand{\ba}{\begin{array}}
\newcommand{\ea}{\end{array}}

\title{Sparse Quadrature for High-Dimensional Integration with Gaussian Measure \thanks{This work is supported by DARPA's EQUiPS program under contract number W911NF-15-2-0121.}} 

\author{ Peng Chen \thanks{Institute for Computational Engineering \& Sciences, The University of Texas at Austin, Stop C0200, Austin, TX 78712 (\email{peng@ices.utexas.edu}).} }
\begin{document}
\maketitle
\slugger{sisc}{xxxx}{xx}{x}{x--x}%slugger should be set to mms, siap, sicomp, sicon, sidma, sima, simax, sinum, siopt, sisc, or sirev

\begin{abstract}
In this work we analyze the dimension-independent convergence property of an abstract sparse quadrature scheme for numerical integration of functions of high-dimensional parameters with Gaussian measure.  
Under certain assumptions of the exactness and the boundedness of univariate quadrature rules as well as 
the regularity of the parametric functions with respect to the  parameters, we obtain the convergence rate $O(N^{-s})$, where $N$ is the number of indices, and $s$ is independent of the number of the parameter dimensions. Moreover, we propose both an a-priori and an a-posteriori schemes for the construction of a practical sparse quadrature rule and perform numerical experiments to demonstrate their dimension-independent convergence rates.
%The convergence is shown to be much faster than that of Monte Carlo quadrature errors for the test functions with sufficient regularity with respect to the parameters.

\end{abstract}

\begin{keywords}
uncertainty quantification, high-dimensional integration, curse of dimensionality, convergence analysis, Gaussian measure, sparse grids, a-priori construction, a-posteriori construction
\end{keywords}

\begin{AMS}
65C20, 65D30, 65D32, 65N12, 65N15, 65N21
\end{AMS}

\pagestyle{myheadings}
\thispagestyle{plain}
\markboth{Sparse Quadrature for High-dimensional Integration with Gaussian Measure}{P. Chen}

\section{Introduction}
In the mathematical modelling of a physical system, uncertainties may arise from various sources of the system input, such as material properties, initial/boundary conditions, and computational geometries. These uncertainies lead to the discrepancy between experimental/observational data and the output of mathematical models in many computational science and engineering fields. How to propagate the uncertainties through the mathematical models and how to calibrate them with given data are known as uncertainty quantification (UQ) problems \cite{ghanem2003stochastic, le2010introduction, xiu2010numerical, smith2013uncertainty}. One of the central tasks of UQ is to compute the integral of some quantity of interest related to the solution with respect to the probability law of the uncertain input. When the uncertain input are approximated by many or a countably infinite number of random variables or parameters, e.g., by Karhunen--Lo\`eve expansion \cite{Schwab2006Karhunen}, one faces high/infinite-dimensional integration problems. 
Since the integral with respect to the parameters can not be computed analytically in general, numerical integration based on certain quadrature rules has to be employed. However, it is of great challenge to perform high-dimensional numerical integration as the computational complexity grows exponentially fast with respect to the number of the parameter dimensions for most deterministic quadratures, which is widely known as ``curse of dimensionality". On the other hand, probabilistic quadrature rules, in particular the Monte Carlo \cite{caflisch1998monte}, are best known to break the curse of dimensionality. However, the convergence of these quadrature rules are often very slow, e.g., the convergence rate of Monte Carlo quadrature is $O(M^{-1/2})$ with $M$ samples, even for functions smoothly depending on low-dimensional parameters. 

Recent years have seen a great development of a sparse quadrature -- numerical integration based on sparse grids  \cite{gerstner1998numerical, gerstner2003dimension, xiu2005high, bungartz2004sparse, nobile2008sparse, babuvska2010stochastic, Schillings2013, beck2014quasi, chen2015new} -- to efficiently deal with high-dimensional integration problems. The curse of dimensionality is shown to be alleviated and/or broken by adaptive allocation of the quadrature points in different dimensions by ample numerical evidence \cite{gerstner1998numerical, gerstner2003dimension, griebel2010dimension, Schillings2013, nobile2014convergence, chen2015sparse, chen2016sparse}, which is also observed for interpolation problems by the same or similar dimension-adaptive algorithms \cite{nobile2008anisotropic, ma2009adaptive, chkifa2014high, chkifa2015breaking}. The dimension-independent convergence rate of the sparse quadrature for infinite-dimensional integration with respect to uniformly distributed parameters was proved in \cite{Schillings2013, Schillings2014}, which is based on 
the dimension-independent convergence of Legendre/Tayor polynomial chaos approximation of stochastic problems in \cite{cohen2010convergence, cohen2011analytic, chkifa2015breaking}. Different approximation methods of the stochastic problems with (lognormal) Gaussian random parameters have been studied in \cite{li2007probabilistic, lin2009efficient, gittelson2010stochastic, schillings2011efficient, charrier2012strong, chen2013weightedrbm, ernst2014stochastic, graham2015quasi, kuo2015multilevel, nobile2016adaptive}. More recently, a dimension-independent convergence rate of the polynomial chaos (based on Hermite polynormials) approximation for an elliptic problem with lognormal coefficients is obtained in \cite{hoang2014n}, whose convergence rate is improved in \cite{bachmayr2017sparse}. A convergence result based on \cite{bachmayr2017sparse} is obtained in \cite{ernst2016convergence} for a sparse collocation method.

In this work, we show the dimension-independent convergence rate of an abstract sparse quadrature scheme for infinite-dimensional integration problems with i.i.d. standard Gaussian distributed parameters. The result holds under certain assumptions of the exactness and the boundedness of univariate quadrature rules, and certain regularity assumptions of the parametric functions with respect to the parameters. In particular, only weighted finitely many derivatives are required to exist as in \cite{bachmayr2017sparse}, compared to an analytic regularity requirement for the result with uniform distribution in \cite{Schillings2013}. 
Two examples are provided to illustrate the regularity assumptions, including an infinite-dimensional nonlinear parametric function, and an elliptic PDE with nonlinear parametric lognormal coefficients. The key of the proof relies on three results: 1). the exactness and the boundedness of the sparse quadrature in arbitrary number of dimensions; 2). the bound of the sparse quadrature error by a weighted sum of the Hermite coefficients; 3). the summability of a weighted sequence of the coefficients arising from the regularity assumptions of the parametric function. Based on the proof, we propose a-priori construction of the sparse quadrature, whose error is guaranteed to converge with a dimension-independent convergence rate with respect to the number of indices. We also present a goal-oriented a-posteriori construction of the sparse quadrature, which turns out to be more accurate for the test examples. Both the a-priori and the a-posteriori construction schemes are
built on several univariate quadrature rules, including the non-nested Gauss--Hermite quadrature rule \cite{gil2007numerical}, the nested transformed Gauss--Kronrod--Patterson (or Gauss--Patterson) quadrature rule \cite{gerstner1998numerical}, and the nested Genz--Keister quadrature rule \cite{genz1996fully}. We will investigate and compare the convergence properties of the construction schemes with different quadrature rules in high dimensions. Numerical experiments on the sparse quadrature for a nonlinear parametric function and an elliptic parametric PDE 
are performed to demonstrate the dimension-independent convergence rate, and to compare the a-priori and the a-posteriori construction schemes with different quadrature rules.

The rest of the paper is organized as follows. In Section \ref{sec:ASQ} we present the sparse quadrature.  Several univariate quadrature rules are introduced in hierarchical representation in Section \ref{subsec:UnivQuad}, followed by the presentation of tensorization of these rules in Section \ref{subsec:TensQuad} and of the sparse quadrature in Section \ref{subsec:ASQ}. Section \ref{sec:ConvAnal} is devoted to a convergence analysis of the sparse quadrature, with a dimension-independent convergence rate obtained in the main theorem in Section \ref{subsec:DimenIndeCon} and two examples shown to satisfy the regularity assumptions in Section \ref{subsec:Examples}. In Section \ref{sec:construction} we introduce an a-priori scheme (in Section \ref{sec:aprioricons}) and an a-posteriori scheme (in Section \ref{sec:aposteriori}) for the construction of the sparse quadrature. We present two sets of numerical experiments in Section \ref{sec:Numerics}, one is on the sparse quadrature for numerical integration of a infinite-dimensional parametric function in Section \ref{subsec:function} and the other for numerical integration of two quantities of interest related to the solution of an elliptic parametric PDE in Section \ref{subsec:PDE}. 
In the last Section \ref{sec:Conclusion} we conclude with some further research perspectives.

\section{Sparse quadrature with Gaussian measure}
In this section, we present a sparse quadrature for numerical  integration of a function of high/infinite-dimensional parameters with Gaussian measure. At first, we formulate a hierarchical representation of a univariate quadrature with three different quadrature rules. Then a tensor-product quadrature is constructed by tensorization of the univariate quadrature. The sparse quadrature is then defined by a sum of the tensorized univariate quadrature in an admissible index set. 

\label{sec:ASQ}
\subsection{Univariate quadrature}\label{subsec:UnivQuad} 
Let $f: \bbR \to \cS$ be a univariate function of a random variable with standard Gaussian (or normal) distribution $N(0,1)$, which takes values in some Banach space $\cS$. Let $I$ denote an \emph{integral operator} defined as
\beq
I(f) = \int_{\bbR} f(y) d\gamma(y),
\eeq
where $\gamma(y)$ is a Gaussian measure with the probability density function $\rho(y)$ given by
\beq
\rho(y) = \frac{1}{\sqrt{2\pi}} e^{-y^2/2}.
\eeq
We introduce a sequence of \emph{quadrature operators} $\{\cQ_l\}_{l\geq 0}$ indexed by \emph{level} $l \in \bbN$, defined as
\beq\label{eq:UnivQuad}
\cQ_l(f) = \sum_{k = 0}^{m_l-1} w_k^l f(y_k^l), \quad l \geq 0,
\eeq 
where $y_k^l \in \bbR$ and $w_k^l \in \bbR$, $k = 0, \dots, m_l-1$,  represent quadrature points and weights; $m_l$ is the number of the quadrature points at level $l$, which satisfies 
$
m_0 = 1 \text{ and } m_l < m_{l+1}.
$
We consider two classical choices of $m_l$ \cite{genz1996fully, klimke2006uncertainty, babuvska2010stochastic} -- adding one point or doubling the number of points from level $l$ to $l+1$, i.e., 
$
m_{l+1} = l + 1 \text{ or } m_l = 2^{l+1}-1.
$
Let $\{\triangle_l\}_{l \geq 0}$ denote a set of \emph{difference quadrature operators}, which are defined as 
\beq\label{eq:DiffOper}
\triangle_l = \cQ_l - \cQ_{l-1}, \quad l \geq 0\;,
\eeq
where we set $\cQ_{-1} = 0$ by convention, i.e., $\cQ_{-1} (f) = 0$. Then we obtain a hierarchical representation of $\cQ_l$ through a telescopic sum of $\triangle_i$, $i = 0, \dots, l$, i.e., 
\beq
\cQ_l = \sum_{i = 0}^l \triangle_i\;.
\eeq

As for the quadrature points and weights in \eqref{eq:UnivQuad} as well as the specific number of points in each level, 
we consider the following ones.

\begin{enumerate}
\item \textbf{Gauss--Hermite (GH) quadrature.} A Gauss quadrature is used for the approximation of the integral with the density $\rho$ as the weight function \cite{gil2007numerical}, 
%i.e.
%\beq
%\int_{\bbR} f(y) \rho(y) dy \approx \sum_{k=1}^{m_l}  f(y_k^l) w_k^l\;,
%\eeq
where $y_0^0 = 0$ and $w_0^0 = 1$ for $l = 0$, and for $l \geq 1$, $y_k^l$, $k = 0, \dots, m_l-1$, are the roots of the orthonormal (with respect to $\rho$) Hermite polynomial $H_n$ for $n = m_l$, where
\beq\label{eq:Hermite}
H_{n}(y) = \frac{(-1)^{n}}{\sqrt{n!}} \frac{\rho^{(n)}(y)}{\rho(y)}, \quad n \geq 0 \;,  
\eeq
and the weights $w_k^l$, $k = 0, 1, \dots, m_l-1$, are given by 
\beq\label{eq:HermiteWeight}
w_k^l = \frac{ 1}{m_l^2(H_{m_l-1}(y_k^l))^2}\;.
\eeq
Note that this quadrature rule is provided for the weight function $\rho(y)$ instead of $e^{-y^2}$ in the classical formula \cite[\S 5.3]{gil2007numerical}. It is exact with $m_l$ points for polynomials of degree up to $2m_l - 1$, the maximum possible exactness. However, the quadrature points are not nested in the sense that $\{y^l_k\}$ are not included in $\{y^{l'}_k\}$ for $l' > l$ (except for $l = 0$ and $m_{l'}$ odd which share the point $y = 0$), so that we need to evaluate the function at all the quadrature points at each level $l$. 
As for the number of points $m_l$ at each level $l$, we consider $m_l = l+1$ (denoted as GH1) and $m_l  = 2^{l+1}-1$ (GH2).

\item \textbf{Transformed Gauss--Kronrod--Patteron (tGKP) quadrature.} In \cite{kronrod1965nodes}, Kronrod 
 presented a method to add $m+1$ points to a $m$-point Gauss--Legendre quadrature rule for integration with constant weight and showed its optimality in integrating polynomials with such nested construction. Patterson \cite{patterson1968optimum} extended this construction iteratively and obtained a nested quadrature rule with $m_l = 2^{l+1}-1$ points at level $l$ (denoted as GKP). 
Then for integration with more general weight, e.g., normal weight $\rho$ in our problem, we can make a change of variables, e.g., by the following map 
 \beq
x = F_\rho(y)\;,
\eeq
where $F_\rho$ is the cumulative distribution function given by $F_\rho(y) = \int_{-\infty}^y \rho(y)dy$,  so that $dx = \rho(y)dy$ and the integration with weight $\rho$ can be transformed as 
\beq
\int_\bbR f(y)\rho(y)dy = \int_{0}^1 f(F_\rho^{-1}(x)) dx \approx \sum_{k = 0}^{m_l-1} f(F_\rho^{-1}(x_k^l)) w_k^l\;. 
\eeq
where $F_\rho^{-1}$ is the inverse of $F_\rho$, $x^l_k$ and $w_k^l$ are the GKP points and weights at level $l$. This transformed GKP (tGKP) has been used, e.g., in \cite{gerstner1998numerical}.

\item \textbf{Genz--Keister (GK) quadrature}: In \cite{genz1996fully}, Genz and Keister extended the GKP construction for uniform distribution to that for normal distribution. However, the construction does not follow that of GKP since the quadrature points obtained by Kronrod's method in level $l = 2$ are not real valued, thus they can not be used as quadrature points. Instead, Genz and Keister showed that, among several extensions, $1, 2, 6, 10, 16$ points can be added, resulting in $m_l = 1, 3, 9, 19, 35$ points at level $l = 0, 1, 2, 3, 4$. Further extension to higher levels is limited by the construction error due to ill-conditioned matrix equations, see details in \cite{genz1996fully}. 
\end{enumerate}

\subsection{Tensor-product quadrature}
\label{subsec:TensQuad}
For a given function $f: Y \to \cS$, where $Y = \bbR^J$, $J \in \bbN$ for finite dimensions or $J = \infty$ for infinite dimensions, we consider the product measure space $(Y, \cB(Y), \bsgamma)$ as in \cite{bachmayr2017sparse} where $\cB(Y)$ is the $\Sigma$-algebra generated by the Borel cylinders and $\bsgamma$ is a tensorized Gaussian probability measure. The task is to compute the integral  
\beq\label{eq:MultiIntegral}
I(f) = \int_{Y} f(\bsy) d\bsgamma(\bsy)\;.
\eeq 
%where $\bsgamma = : Y \to \bbR$ represents a multivariate probability density function given by 
%\beq 
%\bsrho(\bsy) = \bigotimes_{j \geq 1} \rho_j(y_j), \text{ where } \rho_j(y_j) = \frac{1}{\sqrt{2\pi}} e^{-y_j^2/2}\;.
%\eeq  
%Here and in what follows we use $j \geq 1$ as $1 \geq j \leq J$ for shot. Note that $J$ is allowed to be infinite with $\rho(\bsy) d\bsy$ understood as the tensorized Gaussian measure, see \cite{gittelson2010stochastic, bachmayr2017sparse}.
In order to approximate \eqref{eq:MultiIntegral}, we define a tensor-product quadrature as follows.
By $\cF$ we denote a multi-index set of indices $\bsnu = (\nu_1, \dots, \nu_J) $, which is defined as  
\beq
\cF = \{\bsnu \in \bbN^J: |\bsnu|_1 < \infty\},
\eeq
where  $|\bsnu|_1 =  \nu_1 + \cdots + \nu_J$. Note that each $\bsnu \in \cF$ is finitely supported and we denote its finite support set as 
\beq
\bbJ_\bsnu = \{j \in \bbN: \nu_j \neq 0\}.
\eeq
Given $\bsnu \in \cF$, we define a multivariate quadrature operator $\cQ_\bsnu$ as tensorization of the univariate quadrature operators on the tensor-product grids $G_\bsnu =\{\bsy^\bsnu_\bsk: k_{j} = 0, \dots, m_{\nu_{j}} - 1, j \in \bbJ_\bsnu\}$, i.e., 
\beq\label{eq:TensorQuad}
\cQ_\bsnu (f) = \bigotimes_{j \in \bbJ_\bsnu} \cQ_{\nu_j}(f) \equiv \sum_{k_{j_1}=0}^{m_{\nu_{j_1}}-1} \cdots \sum_{k_{j_d}=0}^{m_{\nu_{j_d}}-1} w^{\nu_{j_1}}_{k_{j_1}} \cdots w^{\nu_{j_d}}_{k_{j_d}} f\left(y^{\nu_{j_1}}_{k_{j_1}}, \dots, y^{\nu_{j_d}}_{k_{j_d}}\right)\;,
\eeq
where we suppose $\bbJ_\bsnu$ is explicitly given as $\bbJ_\bsnu = \{j_1, \dots, j_d\}$ for some $d\in \bbN$, and we set $y_j = 0$ for all $j \not \in \bbJ_\bsnu$ and omit their appearance in the arguments of $f$ by slight abuse of notation.  \emph{A full tensor-product quadrature} for approximation of \eqref{eq:MultiIntegral} is defined as $\cQ_{\bsnu}(f)$ for $\bsnu = \bsl$, i.e., $\nu_j = l$ for each $j = 1, \dots, J$ at given $l \in \bbN$. However, the total computational cost of $(m_l)^J$ function evaluations grows exponentially with respect to the dimension $J$, known as \emph{curse of dimensionality}, rendering this quadrature rule computationally prohibitive for large $J$, especially when evaluation of $f$ is expensive.

\subsection{Sparse quadrature}
\label{subsec:ASQ}
In order to alleviate the curse of dimensionality, we turn to a \emph{sparse quadrature}, which breaks the restriction of taking $\nu_j = l$ in each dimension and allows free choice of $\bsnu\in \cF$. For each $\bsnu \in \cF$ with support $\bbJ_{\bsnu}$ in $d$ dimensions, we define a multivariate difference quadrature operator as 
\beq\label{eq:TensorDiff}
\triangle_\bsnu (f) = \bigotimes_{j \in \bbJ_\bsnu} \triangle_{\nu_j} (f) \equiv \bigotimes_{j \in \bbJ_\bsnu} (\cQ_{\nu_j} - \cQ_{\nu_j-1}) (f)\;,
\eeq
which can be computed through  \eqref{eq:TensorQuad} with $2^d$ terms. If the quadrature points are nested, this computation only involves  $\prod_{j \in \bbJ_\bsnu} m_{\nu_j}$ times of evaluation of the function $f$. Otherwise, the number becomes $\prod_{j \in \bbJ_\bsnu}(m_{\nu_{j}}+m_{\nu_{j}-1})$. Both cost becomes feasible for small $d$.
By $\Lambda$ we denote an \emph{admissible} index set \cite{gerstner2003dimension}, also called \emph{downward closed} or \emph{monotonic} index set \cite{chkifa2014high, Schillings2013}, which is defined such that
\beq
\text{for any } \bsnu \in \cF, \text{ if } \bsnu \in \Lambda, \text{ then } \bsmu \in \Lambda \text{ for all } \bsmu \preceq\bsnu \; (i.e., \mu_j \leq \nu_j, \forall j \geq 1)\;.
\eeq
Then we can define a \emph{sparse quadrature operator} on the grids $G_{\Lambda} = \cup_{\bsnu \in \Lambda} G_\bsnu$ as
\beq\label{eq:QuadLambdaIntegral}
\cQ_{\Lambda} (f) = \sum_{\bsnu \in \Lambda} \triangle_\bsnu(f)\;.
\eeq
Note that both the full tensor-product quadrature and the Smolyak quadrature \cite{smolyak1963quadrature, gerstner1998numerical} can be represented as the sparse quadrature with $\Lambda := \{\bsnu \in \cF, |\bsnu|_\infty \leq l\}$ for the former,  where $|\bsnu|_\infty := \max_{j\geq 1} \nu_j$, and $\Lambda := \{\bsnu \in \cF, |\bsnu|_1 \leq l\}$ for the latter. 
%The Smolyak quadrature needs much fewer function evaluations with index set $\Lambda$ of much smaller cardinality than the full tensor-product quadrature. 
A more general sparse quadrature is an anisotropic sparse quadrature in \cite{gerstner2003dimension, nobile2008anisotropic}, where the maximum level of the index $\nu_j$ is allowed to vary for different $j$. The index set $\Lambda$ and the corresponding quadrature points $G_{\Lambda}$ for the full tensor-product quadrature, the isotropic Smolyak sparse quadrature, and the anisotropic sparse quadrature are shown for GK with $l = 4$ in Fig. \ref{fig:sparsegrid} in two dimensions, from which we can observe large reduction of the points successively.

\begin{figure}[!htb]
\begin{center}
\includegraphics[scale=0.25]{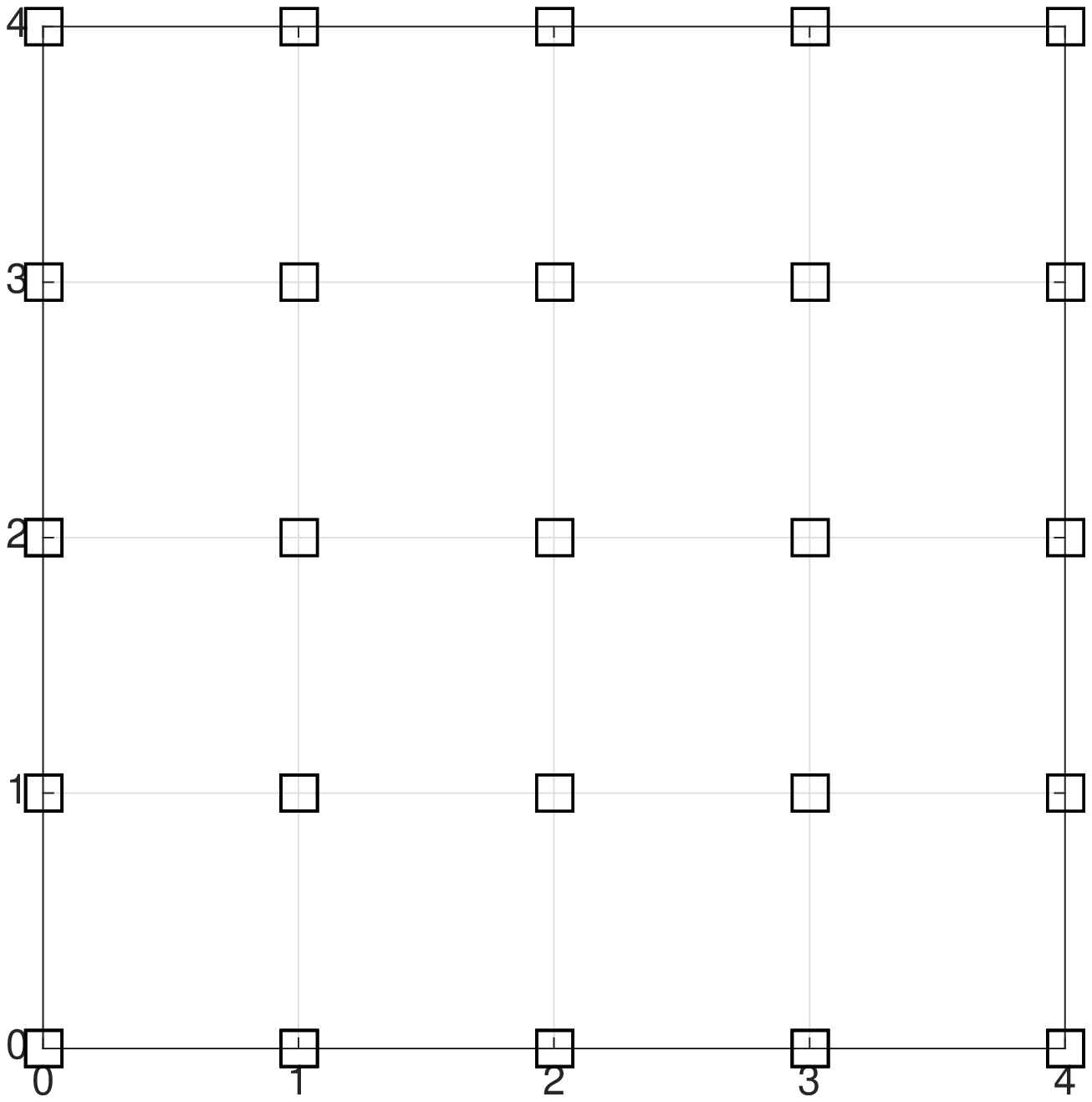}\hspace*{1cm}
\includegraphics[scale=0.25]{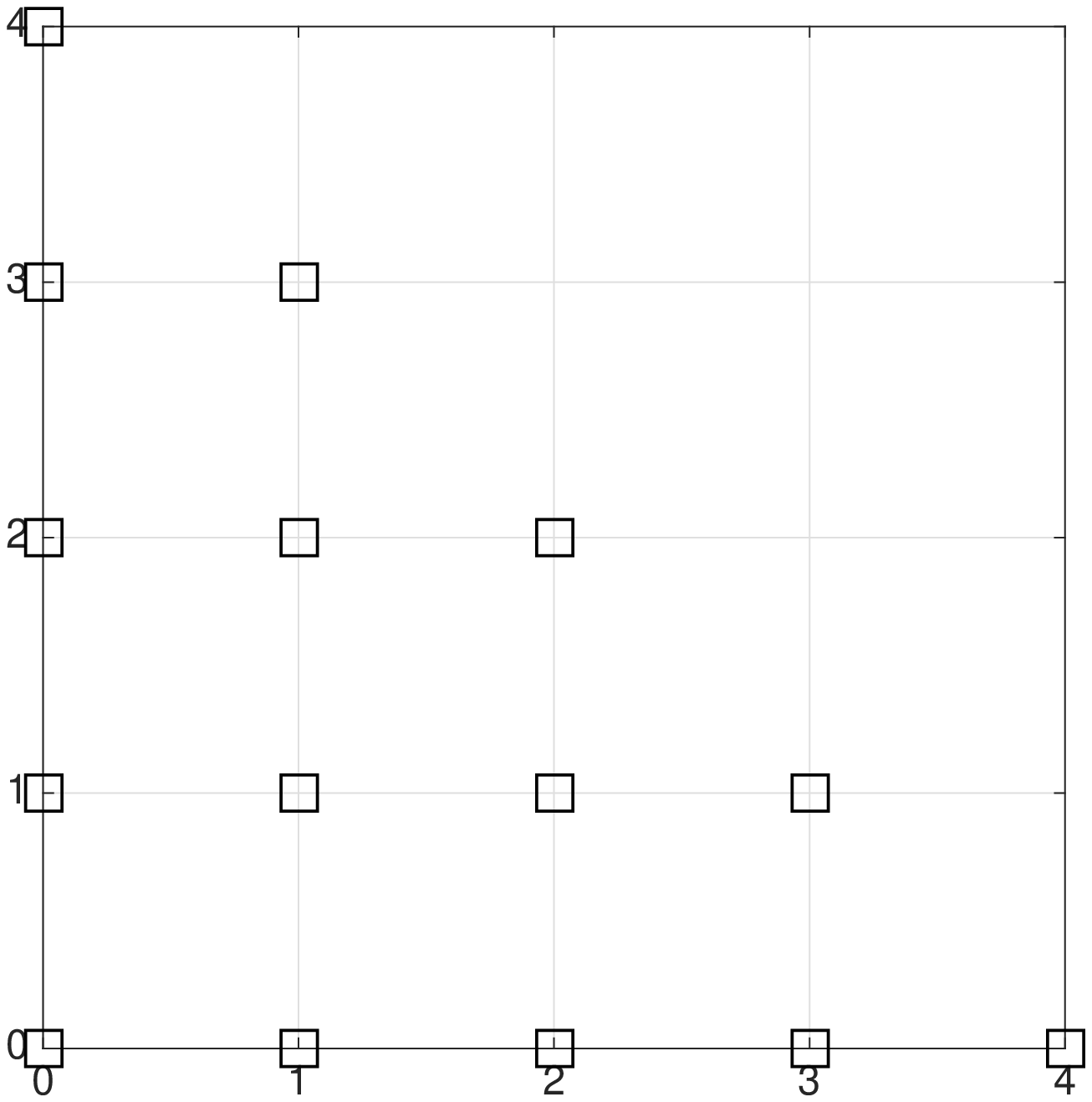}\hspace*{1cm}
\includegraphics[scale=0.25]{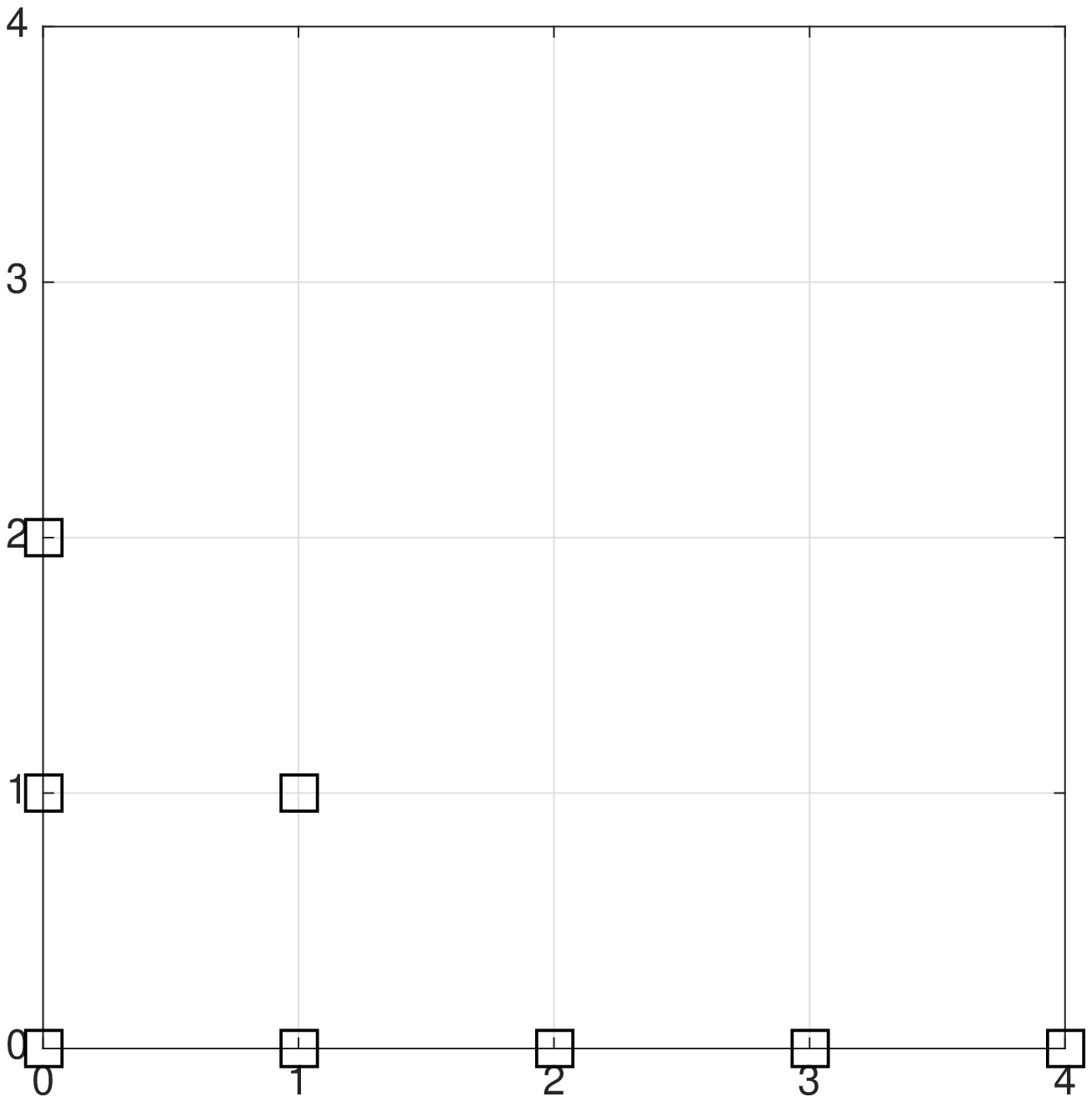}\\[8pt]

\includegraphics[scale=0.25]{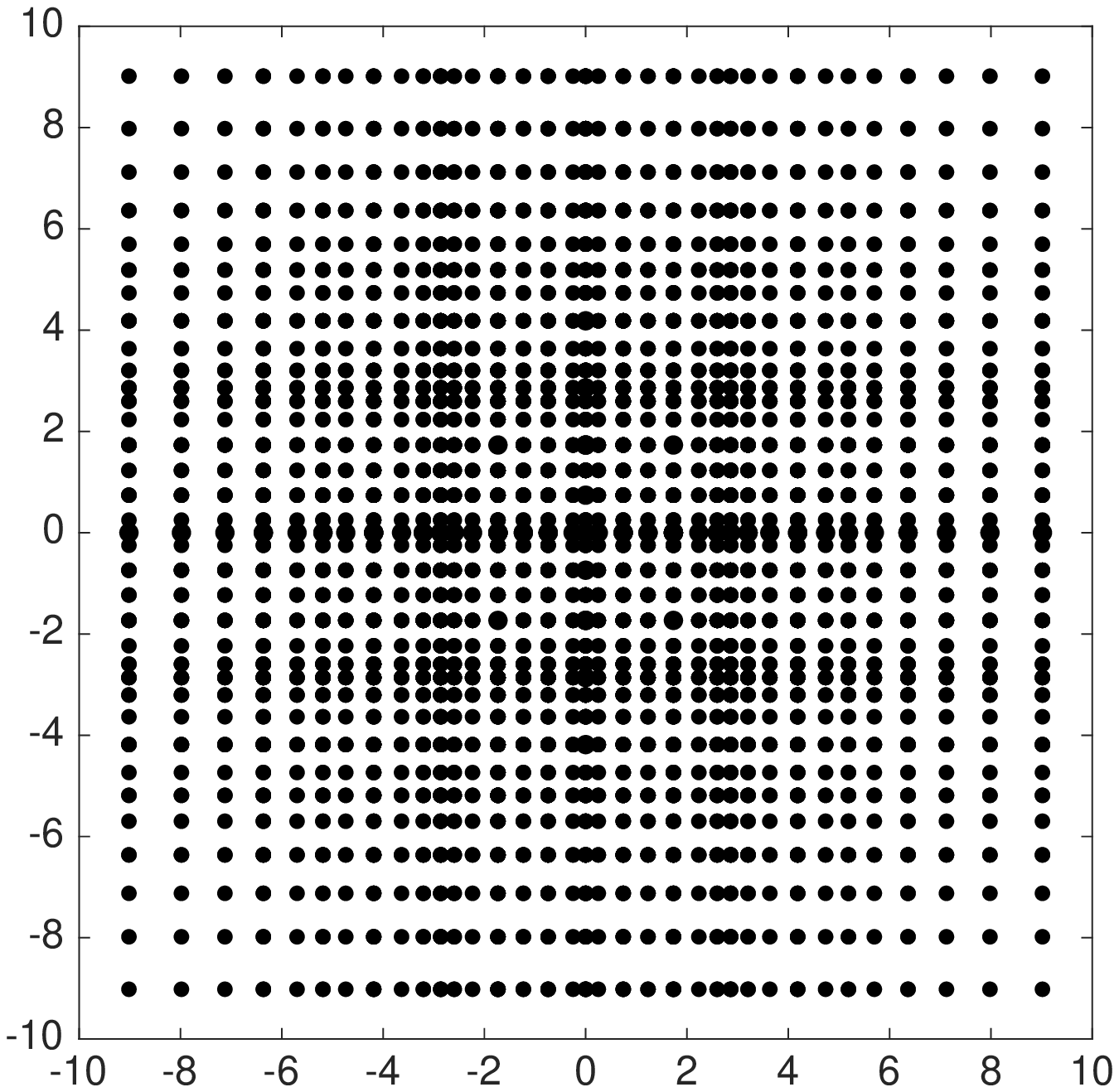}\hspace*{.9cm}
\includegraphics[scale=0.25]{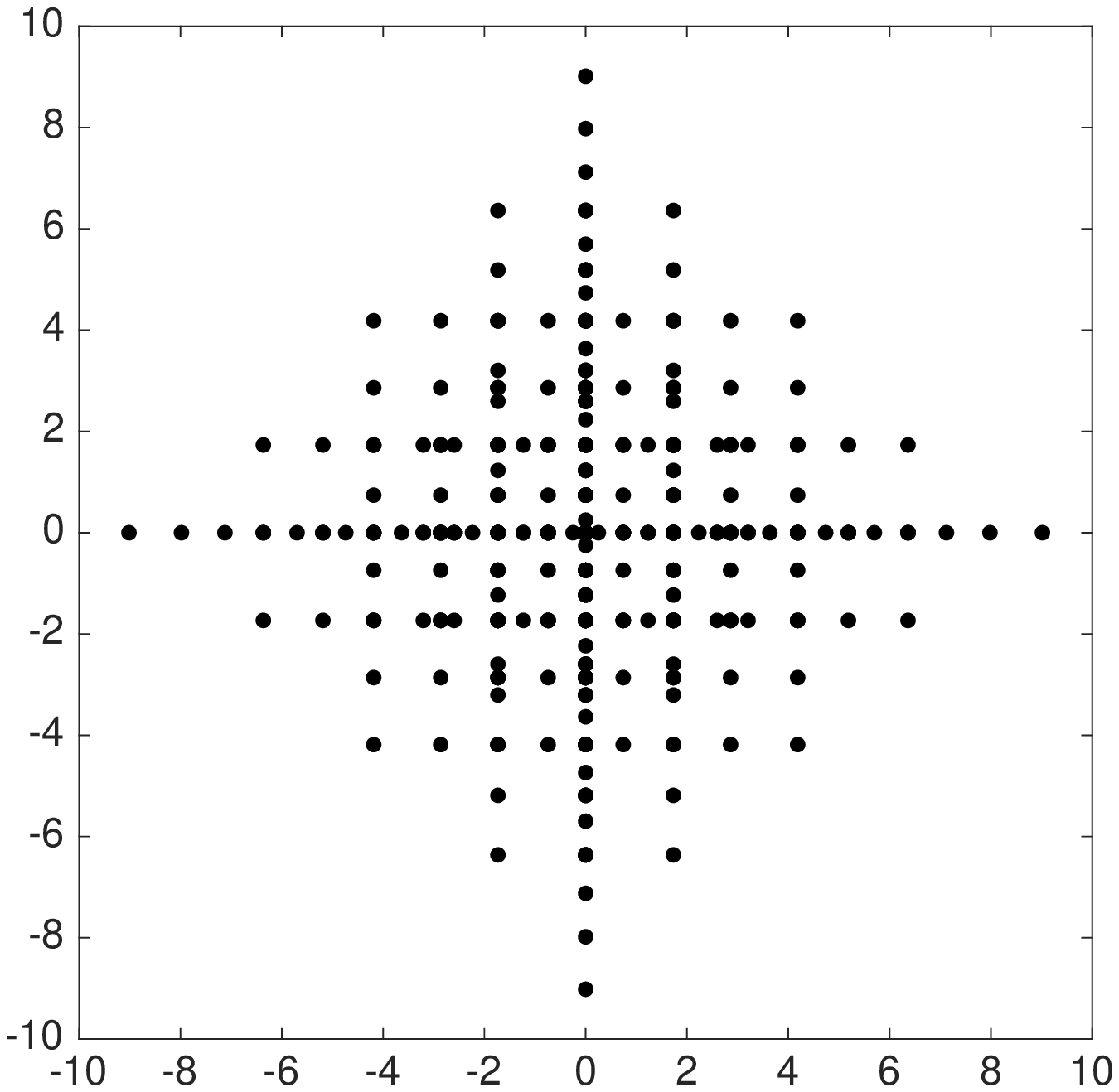}\hspace*{.9cm}
\includegraphics[scale=0.25]{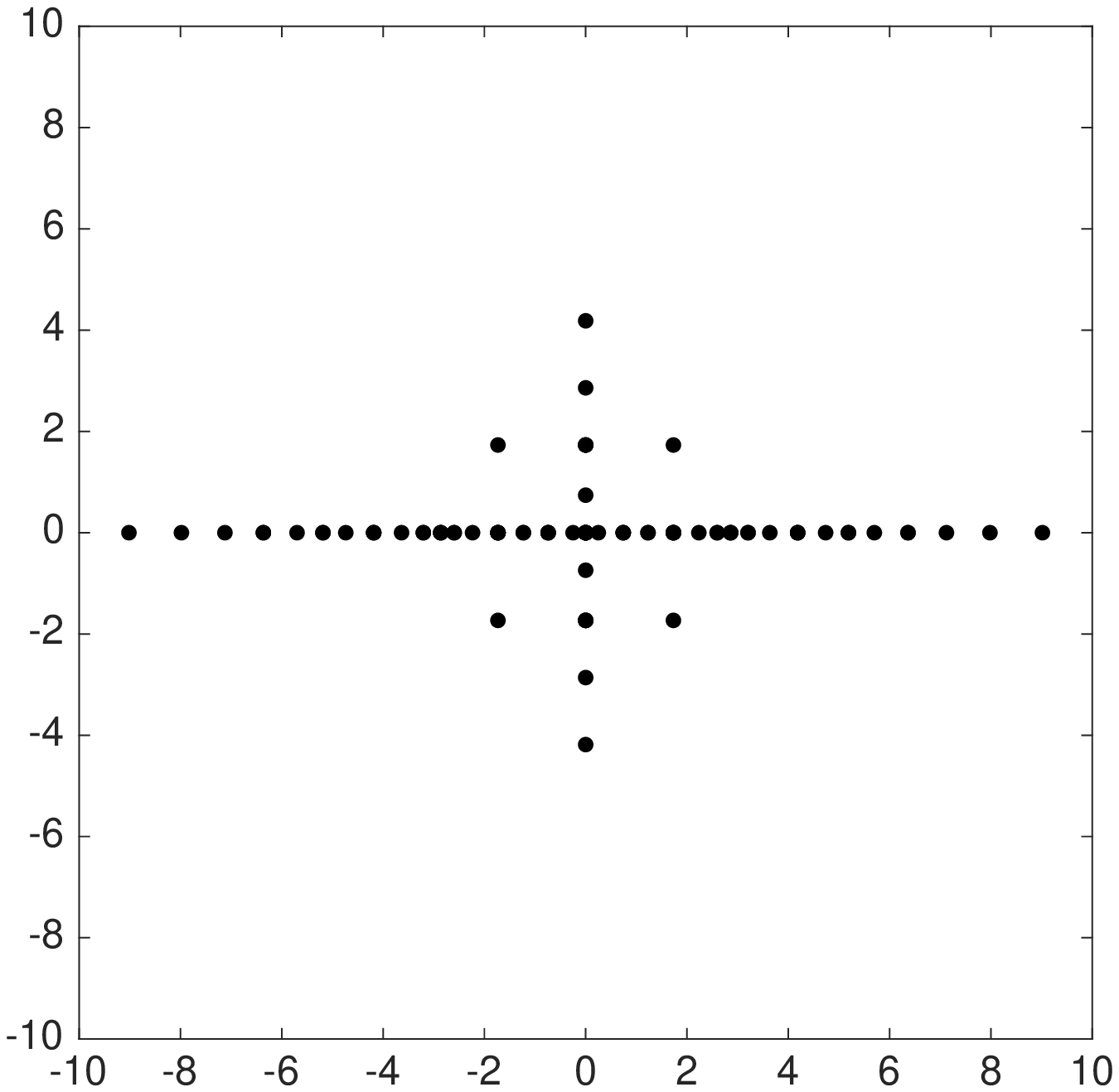}
\end{center}
\caption{The admissible index sets (top) and the corresponding GK quadrature points (bottom). Left: tensor-product grids; middle: isotropic Smolyak sparse grids; right: anisotropic sparse grids.}\label{fig:sparsegrid}
\end{figure}

\section{Convergence analysis}
\label{sec:ConvAnal}
Let $N$ be the cardinality of an admissible index set $\Lambda$, which we denote as $\Lambda_N$ to reflect its cardinality. In this section we provide sufficient conditions for the existence of a sparse quadrature $\cQ_{\Lambda_N}$ whose quadrature error $||I(f) - \cQ_{\Lambda_N}(f)||_\cS$ does not depend on the dimension $J$, thus breaking the curse of dimensionality. Moreover, we analyze the convergence rate of this error with respect to $N$ under certain assumptions on the regularity of the function $f$ with respect to $\bsy$. We provide two specific examples for which such assumptions are illustrated.

\subsection{Convergence analysis}
\label{subsec:DimenIndeCon}
 
In general, we consider the function $f$ to have finite second moment,  i.e., 
\beq
||f||_{L^2_\bsgamma(Y, \cS)} = \left(\int_{Y}||f(\bsy)||^2_\cS d\bsgamma(\bsy)\right)^{1/2} < \infty\;.
\eeq 
In this situation, $f$ admits a polynomial expansion on the Hermite series \cite{bachmayr2017sparse}, i.e.
\beq\label{eq:HermiteExpansion}
f(\bsy) = \sum_{\bsnu \in \cF} f_{\bsnu} H_{\bsnu}(\bsy)\;,
\eeq
where the multivariate Hermite polynomials $H_\bsnu(\bsy)$ and the coefficient $f_{\bsnu}$ read
\beq
H_\bsnu(\bsy) = \prod_{j \geq 1} H_{\nu_j}(y_j), \text{ and } f_\bsnu = \int_{Y} f(\bsy)H_{\bsnu}(\bsy)d\bsgamma(\bsy) \;.
\eeq
Here and in what follows we consider $J=\infty$ ($J \in \bbN$ is a special case where $y_j = 0$ for $j > J$). The univariate Hermite polynomials $\{H_n\}_{n\geq 0}$, as given in \eqref{eq:Hermite}, are orthonormal. Due to this orthonormality, we have the Parseval's identity
\beq
||f||_{L^2_\bsgamma(Y,\cS)}^2 = \sum_{\bsnu \in \cF} ||f_\bsnu||_\cS^2\;,
\eeq
i.e., $\{||f_\bsnu||_\cS\}_{\bsnu \in \cF} \in \ell^2(\cF)$,  a sufficient and necessary condition for $f \in L^2_\bsgamma(Y,\cS)$.

\begin{assumption}\label{ass:Quadrature}
We make the following assumptions on the properties of the univariate quadrature operators $\{\cQ_l\}_{l\geq 0}$:
\begin{enumerate}
\item[A.1] The quadrature at level $l$ is exact for all the functions $f \in \bbP_l \otimes \cS$, where $\bbP_l = \text{span}\{y^i: i = 0, \dots, l\}$, i.e.
\beq
I(f) = \cQ_l(f) \quad \forall f \in \bbP_l \otimes \cS\;.
\eeq
In particular, $I(H_n) = \cQ_l(H_n)$ for Hermite polynomials $H_n$, $n = 0, \dots, l$.

\item[A.2] The quadrature $\cQ_l(H_n)$ for $H_n$ with $n > l$ is bounded by $2$, i.e.
\beq 
|\cQ_l(H_n)| < 2, \quad \forall l \geq 0\;.
\eeq
%\text{ and } \forall n > l
\end{enumerate}
\end{assumption}
Both the Gauss--Hermite (GH) quadrature and the Genz--Keister (GK) quadrature satisfy assumption A.1 for $m_l \geq l+1$, see \cite{gil2007numerical} and \cite{genz1996fully}, while it does not hold for the transformed Gauss--Kronrod--Patterson (tGKP) quadrature. As for assumption $A.2$, we can verify it for the GH quadrature in the following lemma.

\begin{lemma}\label{prop:HermiteBound}
For the Gauss--Hermite quadrature with $m_l = l + 1$ quadrature points at any level $l \geq 0$, see Sec. \ref{subsec:UnivQuad}, we have the bound
\begin{equation}
|\cQ_l(H_n)| < 2, \quad \forall n \geq 0\;,
\end{equation}
for the orthonormal Hermite polynomials $H_n$, $n \geq 0$, defined in \eqref{eq:Hermite}.
\end{lemma}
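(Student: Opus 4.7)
The plan is to split according to whether the Gauss--Hermite rule is exact on $H_n$ or not, and then bound the remaining case uniformly by a pointwise estimate on the orthonormal Hermite polynomials. For $n \le 2m_l - 1 = 2l+1$, assumption A.1 (which holds for GH with $m_l = l+1$) gives $\cQ_l(H_n) = I(H_n) = \delta_{n,0}$ by orthonormality of $\{H_n\}$ with respect to $\rho$, so $|\cQ_l(H_n)| \le 1 < 2$ trivially. The substantive case is $n \ge 2l+2$, but the argument I give below is in fact uniform in $n \ge 0$.

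The main estimate combines three ingredients: positivity of the Gauss--Hermite weights $w_k^l > 0$, the identity $\sum_k w_k^l = \cQ_l(1) = I(1) = 1$, and a Cramer-type pointwise bound $|H_n(y)| \le c_0\, e^{y^2/4}$ with an absolute constant $c_0 \le \sqrt{2}$, valid for every $n \ge 0$ and $y \in \bbR$. Chaining these yields
\begin{equation*}
|\cQ_l(H_n)| \;\le\; \sum_{k=0}^{l} w_k^l\, |H_n(y_k^l)| \;\le\; c_0 \sum_{k=0}^{l} w_k^l\, e^{(y_k^l)^2/4} \;=\; c_0\, \cQ_l\bigl(e^{y^2/4}\bigr),
\end{equation*}
so the problem reduces to showing $\cQ_l(e^{y^2/4}) < \sqrt{2}$ uniformly in $l$.

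For this reduction I would invoke the classical Gauss quadrature remainder formula $I(f) - \cQ_l(f) = \frac{(l+1)!}{(2l+2)!}\, f^{(2l+2)}(\xi)$ for some $\xi \in \bbR$, applied with $f(y) = e^{y^2/4}$. The Taylor expansion $f(y) = \sum_{k \ge 0} y^{2k}/(4^k k!)$ has only nonnegative coefficients at even powers of $y$; differentiating an even number of times preserves this, so each $f^{(2m)}$ is a convergent power series in $y^2$ with nonnegative coefficients, whence $f^{(2l+2)}(\xi) > 0$ for every $\xi \in \bbR$. Consequently $\cQ_l(e^{y^2/4}) < I(e^{y^2/4}) = (2\pi)^{-1/2}\int_{\bbR} e^{-y^2/4}\,dy = \sqrt{2}$, and chaining the inequalities gives $|\cQ_l(H_n)| < c_0 \sqrt{2} \le 2$ as claimed. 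The main obstacle I foresee is rigorously pinning down the Cramer constant $c_0$ in the normalization \eqref{eq:Hermite} so that $c_0 \sqrt{2} \le 2$: while $|H_n(y)|\, e^{-y^2/4}$ is classically uniformly bounded in $n$ (via Plancherel--Rotach asymptotics or the Cramer--Charlier inequality, often expressed as $\rho(y)^{1/2} |H_n(y)| \le (2\pi)^{-1/4}$), the sharp constant depends on conventions and must be tracked carefully through the probabilists' orthonormal scaling to verify $c_0 \le \sqrt{2}$.
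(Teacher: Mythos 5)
Your proposal is correct and follows essentially the same route as the paper: positivity of the Gauss--Hermite weights, the Cram\'er-type bound $|H_n(y)| < c\, e^{y^2/4}$, and the Markoff/Gauss remainder formula together with nonnegativity of the even derivatives of $e^{y^2/4}$ (which you justify directly from the power series, where the paper cites a lemma of Nevai) to get $\cQ_l(e^{y^2/4}) \le \sqrt{2}$. The one loose end you flag is resolved exactly as in the paper: after the rescaling $H_n(x) = (2^{n/2}\sqrt{n!})^{-1}\tilde{H}_n(x/\sqrt{2})$, the classical Cram\'er constant $c \approx 1.086435 < \sqrt{2}$, so $|\cQ_l(H_n)| \le \sqrt{2}\,c \approx 1.536 < 2$.
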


The proof is based on the Cram\'er inequality, e.g., in \cite{abramowitz1966handbook}, that is made aware from \cite[Lemma 14]{ernst2016convergence}, and the Markoff's theorem, e.g., in \cite{szeg1939orthogonal}.
\begin{proof}
For the (physicists') orthogonal Hermite polynomials $\tilde{H}_n$, $n = 0, 1, \dots $, defined as \cite[Chap. 22, p. 776]{abramowitz1966handbook}
\beq
\tilde{H}_n(x) = (-1)^n e^{x^2} \frac{d^n}{dx^n} e^{-x^2}, \text{ with } \int_{-\infty}^\infty \tilde{H}_n(x) \tilde{H}_m(x) e^{-x^2} dx = \sqrt{\pi} 2^n n! \delta_{nm}\;,
\eeq
we have the Cram\'er inequality \cite[Chap. 22, p. 787]{abramowitz1966handbook}
\beq
|\tilde{H}_n(x)| < c 2^{n/2} \sqrt{n!} e^{x^2/2}, \text{ with } c  \approx 1.086435\;.
\eeq
Consequently, with proper rescaling for the (probabilists') orthonormal Hermite polynormials defined in \eqref{eq:Hermite}, i.e., $H_n(x) = (2^{n/2}\sqrt{n!})^{-1} \tilde{H}_n(x/\sqrt{2})$, we have 
\beq\label{eq:Hnfn}
|H_n(x)| < c e^{x^2/4}\;. 
\eeq
For the smooth function $f(x) = e^{x^2/4}$, by Markoff's theorem \cite[Chap. 16, p. 378]{szeg1939orthogonal} (note there $n = m_l =  l + 1$ for our $l$ here) there exists $\xi \in \bbR$ s.t.
\beq\label{eq:fxQlf}
\int_{\bbR} f(x) \rho(x)dx = \cQ_l(f) + \frac{f^{(2l+2)}(\xi)}{(2l+2)!}k_{l+1}^{-2}\;,
\eeq
where $k_{l+1}$ is the highest coefficient of the Hermite polynomial $H_{l+1}(x)$. As any even order derivative of $f$ is non-negative (see \cite[Lemma 4]{nevai1980mean}), from \eqref{eq:fxQlf} we have 
\beq
\cQ_l(f) \leq \int_{\bbR} f(x) \rho(x)dx = \frac{1}{\sqrt{2\pi}} \int_{\bbR} e^{x^2/4} e^{-x^2/2} dx = \sqrt{2}\;.
\eeq
Hence, we obtain 
\beq
|\cQ_l(H_n)| \leq \cQ_l(|H_n|) \leq c \cQ_l(f) \leq \sqrt{2} c \approx 1.536451 < 2\;,
\eeq
where the first inequality is due to the positivity of the quadrature weights \eqref{eq:HermiteWeight}, and the second one is due to the bound \eqref{eq:Hnfn}.
\end{proof}

As for the GK quadrature and the tGKP quadrature, no theoretical result is known to us for assumption A.2. 
%for the common choice $y_0^0 = 0$ and $w_0^0 = 1$ when $l = 0$, we have \cite[Chap. 22]{abramowitz1966handbook}
%\beq
%\cQ_0(H_n) = H_n(0) = \left\{
%\begin{array}{cc}
%0, & \quad n = 2m + 1, \text{}\\
%(-1)^m \frac{(2m)!}{2^m m !\sqrt{(2m)!}}, & \quad n = 2m, 
%\end{array}
%\right.
%\eeq
%where for $n = 2m$, we have $H_0(0) = 1$ when $m = 0$, and the bound  
%\beq
%|H_{2m}(0)| = \frac{\sqrt{(2m)!}}{(2m)!!} = \frac{\sqrt{2m\times  (2m-1)}}{2m} \times \cdots \times \frac{\sqrt{2 \times 1}}{2} < 1\;,
%\eeq
%when $m \geq 1$, which verifies that
%$|\cQ_0(H_n)| < 2$ for any $n \in \bbN$. For $l > 0$, 
%A.2 can be proved for GH, see Proposition \ref{prop:HermiteBound} and its proof in the Appendix.
Numerically, we compute $\cQ_l(H_n)$ by all the three types of quadrature rules with all possible levels $l$ and degrees of Hermite polynomial $n$ upto machine precision. The results show that $A.2$ holds in all cases with a sharper bound $|\cQ_l(H_n)| \leq 1$. The left of Fig. \ref{fig:QuadAccu} displays the numerical value $|\cQ_l(H_n)|$ for the three quadrature rules with $l = 3$ and $n = 0, \dots, 150$ (the polynomial degree $n$ can not be larger due to machine precision); the right of Fig. \ref{fig:QuadAccu} shows $|\cQ_l(H_n)| \leq 1$ by the GH2 (GH with $m_l = 2^{l+1}-1$) quadrature at $l = 0, 1, 2, 3, 4, 5$ and $n = 0, \dots, 150$. Moreover, from the left figure we can also see that GH2 (with $m_3 = 15$ points) is exact (with machine precision) for $I(H_n)$ for $n = 0, \dots, 29$, and GK (with $m_3= 19$ points) is exact for $n = 0, \dots, 29$, which satisfy assumption A.1.

\vspace*{0.2cm}
\begin{figure}[htb!]
\begin{center}
\includegraphics[scale=0.33]{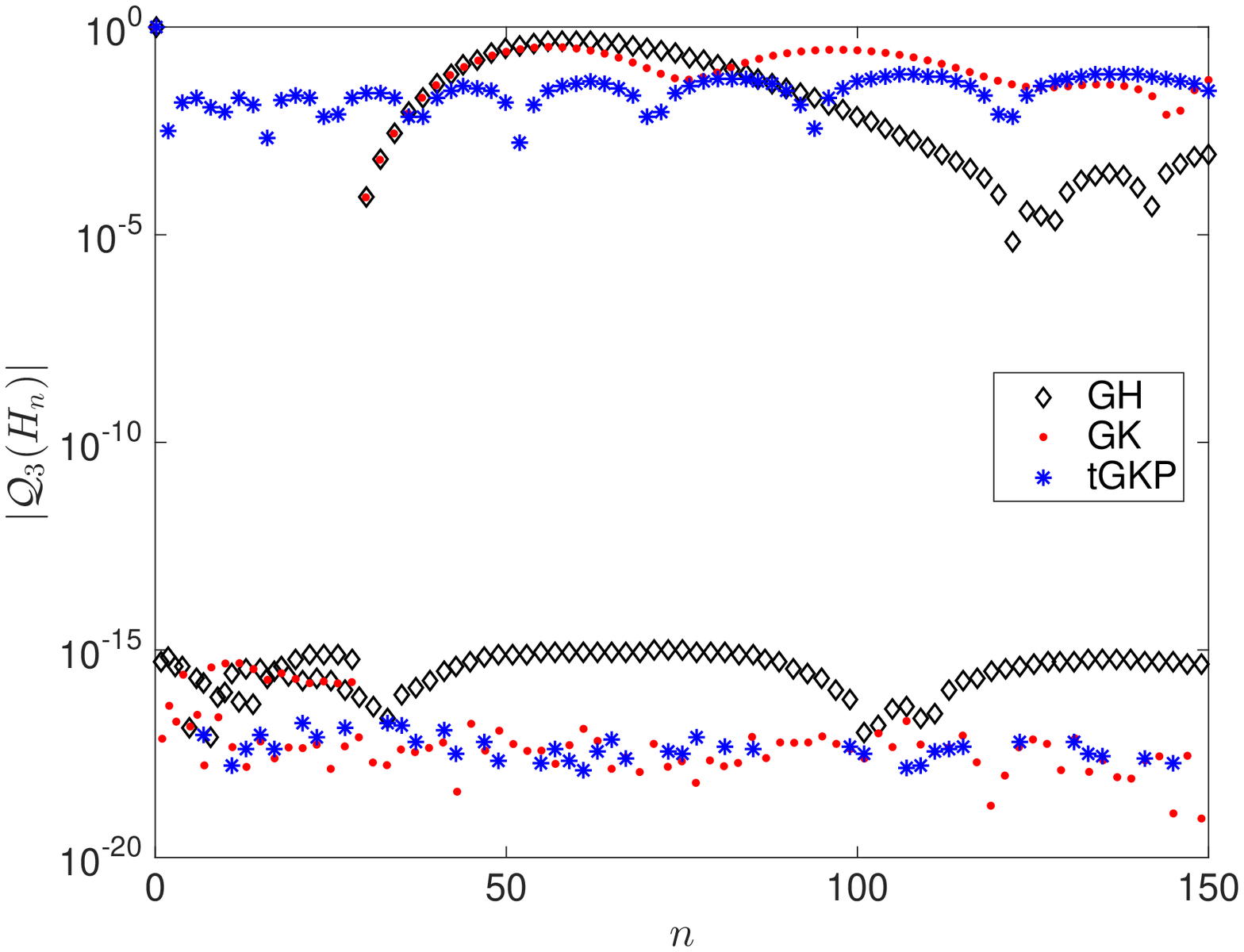}
\includegraphics[scale=0.33]{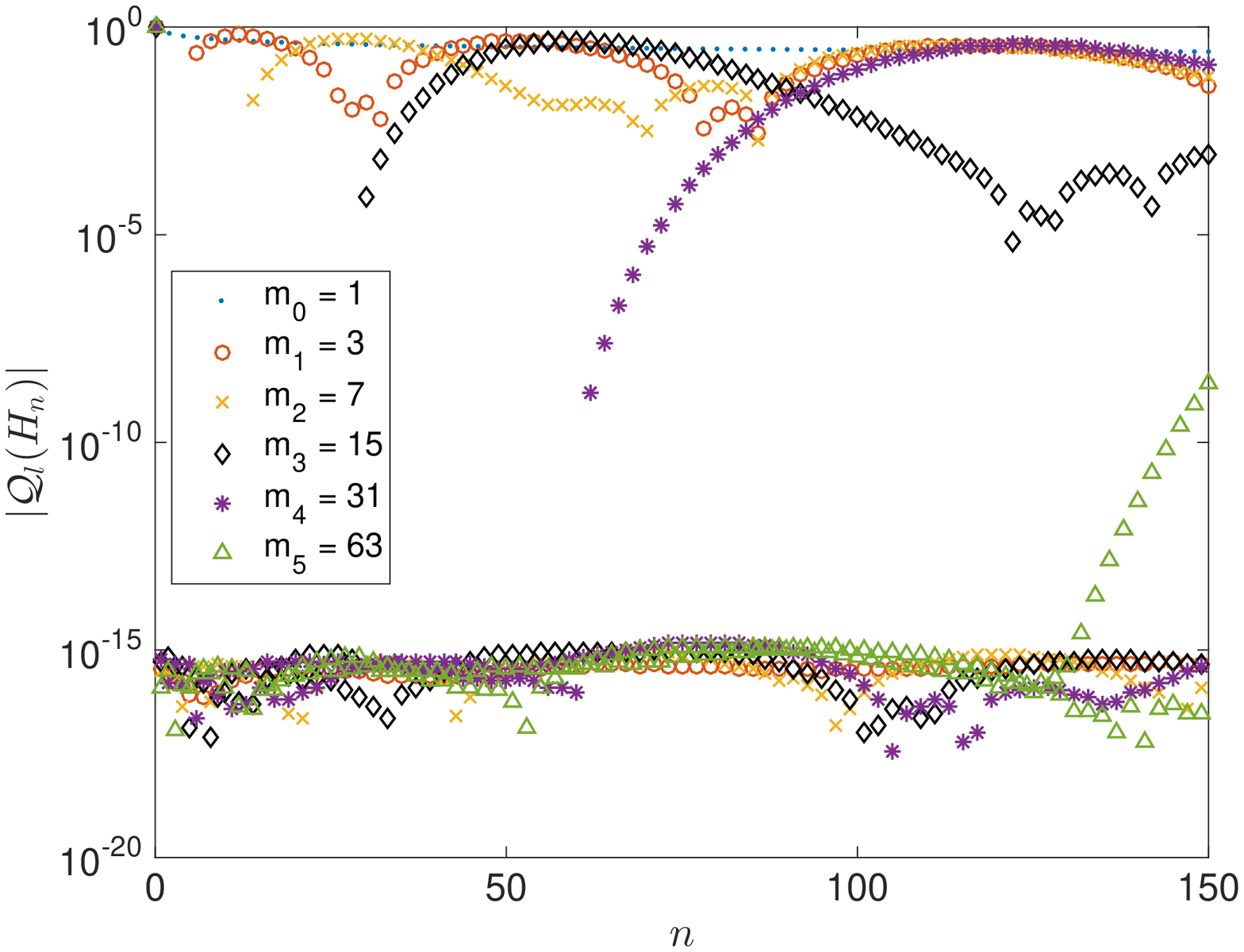}
\end{center}
\caption{Left: the numerical values $|\cQ_3(H_n)|$ by GH2, GK, and tGKP; right: the numerical values $|\cQ_l(H_n)|$ by GH2 with $m_l = 2^{l+1}-1$ points for $l = 0, 1, 2, 3, 4, 5$.}
\label{fig:QuadAccu}
\end{figure}

Assumption \ref{ass:Quadrature} implies the exactness and the boundedness of the sparse quadrature $\cQ_{\Lambda}$ in multiple dimensions as presented in the following lemma. Similar results have been obtained on the exactness of the sparse quadrature for integration with respect to uniform measure, see, e.g., \cite{back2011stochastic, Schillings2013}.
\begin{lemma}
Under Assumption \ref{ass:Quadrature}, for any admissible index set $\Lambda \subset \cF$, we have 
\beq\label{eq:ExactLambda}
I(f) = \cQ_\Lambda(f), \quad \forall f \in \bbP_\Lambda \otimes \cS\;,
\eeq 
where $\bbP_{\Lambda} = \text{span}\{\prod_{j\geq 1}y_j^{\nu_j}, \bsnu \in \Lambda\}$. In particular, as $H_\bsnu \in \bbP_\Lambda$, we have
\beq\label{eq:QuadLambda}
I(H_\bsnu) = \cQ_\Lambda(H_\bsnu) , \quad \forall \bsnu \in \Lambda\;.
\eeq
Moreover, for any $\bsnu \in \cF \setminus \bsnull$, we have 
\beq\label{eq:QuadBound}
|\cQ_{\Lambda \cap \cR_\bsnu} (H_{\bsnu})| \leq \prod_{j \in \bbJ_\bsnu} (1+\nu_j)^3\;.
\eeq 
where the index set $\cR_\bsnu := \{\bsmu \in \cF: \bsmu \preceq \bsnu\}$, and $\bbJ_\bsnu$ is the support set of $\bsnu$.
\end{lemma}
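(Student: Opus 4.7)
The plan is to build everything around a single factorization and vanishing identity for $\triangle_\bsmu(H_\bsnu)$. Using the tensor structure $H_\bsnu(\bsy) = \prod_j H_{\nu_j}(y_j)$ together with the conventions $H_0 \equiv 1$, $\triangle_0 \equiv \cQ_0$, and $y_j = 0$ for $j \notin \bbJ_\bsmu$, the multivariate difference factorizes coordinatewise as $\triangle_\bsmu(H_\bsnu) = \prod_j \triangle_{\mu_j}(H_{\nu_j})$, with only indices in $\bbJ_\bsmu \cup \bbJ_\bsnu$ contributing nontrivial factors. The key consequence of A.1 is that whenever $\mu_j > \nu_j$ we have $H_{\nu_j} \in \bbP_{\mu_j - 1}$, so both $\cQ_{\mu_j}(H_{\nu_j})$ and $\cQ_{\mu_j - 1}(H_{\nu_j})$ equal $I(H_{\nu_j})$ and their difference vanishes. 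Therefore $\triangle_\bsmu(H_\bsnu) = 0$ unless $\bsmu \preceq \bsnu$.

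To establish \eqref{eq:QuadLambda}, fix $\bsnu \in \Lambda$. Downward-closedness gives $\cR_\bsnu \subset \Lambda$, and by the vanishing above the series $\cQ_\Lambda(H_\bsnu) = \sum_{\bsmu \in \Lambda} \triangle_\bsmu(H_\bsnu)$ collapses to the sum over $\bsmu \preceq \bsnu$. This product telescopes:
$$\cQ_\Lambda(H_\bsnu) = \prod_{j \in \bbJ_\bsnu} \sum_{\mu_j = 0}^{\nu_j} \triangle_{\mu_j}(H_{\nu_j}) = \prod_{j \in \bbJ_\bsnu} \cQ_{\nu_j}(H_{\nu_j}) = \prod_{j \in \bbJ_\bsnu} I(H_{\nu_j}) = I(H_\bsnu),$$
the penultimate step using A.1 since $H_{\nu_j} \in \bbP_{\nu_j}$. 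Identity \eqref{eq:ExactLambda} then follows by linearity: each $\prod_j y_j^{\nu_j}$ with $\bsnu \in \Lambda$ expands as a finite linear combination of $H_\bsmu$ with $\bsmu \preceq \bsnu$, all in $\Lambda$, so $\bbP_\Lambda \otimes \cS = \mathrm{span}\{H_\bsmu \otimes s : \bsmu \in \Lambda,\, s \in \cS\}$, and $I = \cQ_\Lambda$ on generators by what we just proved.

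For the bound \eqref{eq:QuadBound}, note $\Lambda \cap \cR_\bsnu \subset \cR_\bsnu = \{\bsmu \preceq \bsnu\}$, so combined with the vanishing of step one,
$$|\cQ_{\Lambda \cap \cR_\bsnu}(H_\bsnu)| \leq \sum_{\bsmu \preceq \bsnu} \prod_{j \in \bbJ_\bsnu} |\triangle_{\mu_j}(H_{\nu_j})|.$$
Assumption A.2 (supplemented by A.1 in the range $n \leq l$) yields $|\cQ_l(H_n)| < 2$ for every $l, n$, hence $|\triangle_{\mu_j}(H_{\nu_j})| < 4$. Counting the $|\cR_\bsnu| = \prod_{j \in \bbJ_\bsnu}(1+\nu_j)$ summands gives the provisional bound $4^{|\bbJ_\bsnu|} \prod_{j \in \bbJ_\bsnu}(1 + \nu_j)$. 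Since $\nu_j \geq 1$ for $j \in \bbJ_\bsnu$ we have $4 \leq (1+\nu_j)^2$, and absorbing this extra factor into the product yields $\prod_{j \in \bbJ_\bsnu}(1+\nu_j)^3$ as claimed. The main subtlety is the careful bookkeeping for the initial factorization and the application of A.1 to identify the zero terms; everything else is elementary algebra and counting.
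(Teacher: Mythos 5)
Your proof is correct, and for the exactness statements \eqref{eq:ExactLambda}--\eqref{eq:QuadLambda} it takes a genuinely different route from the paper. The paper proves \eqref{eq:ExactLambda} by induction over admissible sets: it starts from $\Lambda=\{\bszero\}$ and shows that exactness is preserved when one admissible index $\bsnu_{+k}=\bsnu^*+\bse_k$ is appended, splitting $f$ into $f_\Lambda+f_{+k}$ and using A.1 to show the extra difference terms vanish. You instead prove \eqref{eq:QuadLambda} directly: the coordinatewise factorization $\triangle_\bsmu(H_\bsnu)=\prod_j \triangle_{\mu_j}(H_{\nu_j})$ plus A.1 kills every term with $\bsmu\not\preceq\bsnu$, downward closedness ensures all of $\cR_\bsnu$ lies in $\Lambda$, and the surviving sum telescopes to $\prod_{j\in\bbJ_\bsnu}\cQ_{\nu_j}(H_{\nu_j})=I(H_\bsnu)$; then \eqref{eq:ExactLambda} follows by expanding each monomial $\bsy^\bsnu$, $\bsnu\in\Lambda$, in Hermite polynomials $H_\bsmu$ with $\bsmu\preceq\bsnu$ (degree-triangularity of the basis change) and invoking downward closedness and linearity. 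This avoids the induction altogether, works verbatim for infinite admissible $\Lambda$ (only finitely many differences survive), and makes the mechanism behind exactness more transparent; the paper's induction, on the other hand, works directly with monomials and does not need the change of basis. Your treatment of the case $\mu_j=0$, $j\in\bbJ_\bsnu\setminus\bbJ_\bsmu$ (bounding $|\cQ_0(H_{\nu_j})|$ by $2$ via A.2, and by A.1 when $\nu_j=0$) is a correct way to handle the factors the paper absorbs when it enlarges $4^{|\bbJ_\bsmu|}$ to $4^{|\bbJ_\bsnu|}$. For the bound \eqref{eq:QuadBound} your argument coincides with the paper's: termwise bound $4^{|\bbJ_\bsnu|}$ from A.1/A.2, the count $|\cR_\bsnu|=\prod_{j\in\bbJ_\bsnu}(1+\nu_j)$, and $4(1+\nu_j)\le(1+\nu_j)^3$ for $\nu_j\ge 1$.
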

\begin{proof}
The result \eqref{eq:ExactLambda} can be obtained by induction based on the assumption A.1, e.g., as in \cite[Theorem 4.2]{Schillings2013} for the uniform measure.  
Here, we provide a different proof for the Gaussian measure. First, for $\Lambda = \{\bszero\}$, i.e., $f(\bsy) = u_\bszero$ with some function $u_\bszero \in \cS$ for all $\bsy \in Y$, we have $I(f) = u_\bszero$ and $\cQ_\bszero(f) = f(\bszero) = u_\bszero$, which verifies \eqref{eq:ExactLambda}. 
Suppose \eqref{eq:ExactLambda} holds for an admissible set $\Lambda$, then we only need to verify that \eqref{eq:ExactLambda} also holds for the admissible set $\Lambda_{+k} = \Lambda \cup \{\bsnu_{+k}\}$ for all possible $k \in \bbN$, where $\bsnu_{+k} = \bsnu^*+\bse_k$ for some $\bsnu^* \in \Lambda$ such that $(\bsnu^*)_k \geq \nu_k$ for all $\bsnu \in \Lambda$. Here $\bse_k\in \cF$ whose $k$-th elements is one and all other elements are zero. In fact, the function $f \in \bbP_{\Lambda_{+k}} \otimes \cS$ can be decomposed as 
\beq
f(\bsy) = \sum_{\bsnu \in \Lambda_{+k} } \bsy^{\bsnu} u_\bsnu = \sum_{\bsnu \in \Lambda } \bsy^{\bsnu} u_\bsnu +\bsy^{\bsnu_{+k}} u_{\bsnu_{+k}} \equiv f_{\Lambda}(\bsy) + f_{+k}(\bsy), 
\eeq
where we have denoted $f_{\Lambda}(\bsy) = \sum_{\bsnu \in \Lambda } \bsy^{\bsnu} u_\bsnu $ and $f_{+k}(\bsy) = \bsy^{\bsnu_{+k}} u_{\bsnu_{+k}} $.
Then by the definition \eqref{eq:QuadLambdaIntegral} of the sparse quadrature operator, we have 
\beq
\cQ_{\Lambda_{+k}}(f_{\Lambda}) = \cQ_{\Lambda}(f_{\Lambda}) + \triangle_{\bsnu_{+k}} (f_{\Lambda})\;,
\eeq
where the first term $\cQ_{\Lambda}(f_{\Lambda}) = I(f_{\Lambda})$ by the induction's assumption, and the second term, by the definition \eqref{eq:TensorDiff}, can be explicitly written as 
\beq
\triangle_{\bsnu_{+k}} (f_{\Lambda}) = \sum_{\bsnu \in \Lambda} u_{\bsnu} \bigotimes_{j \in \bbJ_{\bsnu_{+k}}} (\cQ_{(\bsnu_{+k})_j} - \cQ_{(\bsnu_{+k})_j - 1}) (y_j^{\nu_j})\;.
\eeq
By A.1 and the fact $\nu_k \leq (\bsnu^*)_k$ for all $\bsnu \in \Lambda$ and $\bsnu_{+k} = \bsnu^* + \bse_k$, we have
\beq\label{eq:cQnuk}
(\cQ_{(\bsnu_{+k})_k} - \cQ_{(\bsnu_{+k})_k - 1}) (y_k^{\nu_k}) = (\cQ_{(\bsnu^*)_k+1} - \cQ_{(\bsnu^*)_k }) (y_k^{\nu_k})= I(y_k^{\nu_k}) - I(y_k^{\nu_k}) = 0\;,
\eeq 
which implies that $\triangle_{\bsnu_{+k}}(f_{\Lambda}) = 0$, thus $\cQ_{\Lambda_{+k}}(f_{\Lambda}) = I(f_{\Lambda})$. As for $f_{+k}$, we have 
\beq
\cQ_{\Lambda_{+k}}(f_{+k}) = \sum_{\bsnu \in \Lambda_{+k}} \triangle_\bsnu (f_{+k}) = \sum_{\bsnu \in \cR_{\bsnu_{+k}}} \triangle_\bsnu (f_{+k}) + \sum_{\bsnu \in \Lambda_{+k} \setminus \cR_{\bsnu_{+k}}} \triangle_\bsnu (f_{+k})\;, 
\eeq
where we recall that $\cR_{\bsnu_{+k}} = \{\bsmu \in \cF: \bsmu \preceq \bsnu_{+k} \}$. Then by A.1 the first term yields 
\beq
\sum_{\bsnu \in \cR_{\bsnu_{+k}}} \triangle_\bsnu (f_{+k}) =  \cQ_{\cR_{\bsnu_{+k}}} (f_{+k}) = u_{\bsnu_{+k}} \bigotimes_{j\geq 1} \cQ_{(\bsnu_{+k})_j}\left(y_j^{(\bsnu_{+k})_j}\right) = I(f_{+k})\;,
\eeq
and $\triangle_\bsnu(f_{+k})$ vanishes for each $\bsnu \in \Lambda_{+k} \setminus \cR_{\bsnu_{+k}}$ by the same reasoning as in \eqref{eq:cQnuk}, i.e., there exists $j \in \bbJ_{\bsnu}$ such that $\nu_j > (\bsnu_{+k})_j$, so that $(\cQ_{\nu_j} - \cQ_{\nu_j - 1})\left(y_j^{(\bsnu_{+k})_j}\right)= 0$. Therefore, we also have $\cQ_{\Lambda_{+k}}(f_{+k}) = I(f_{+k})$, so that $\cQ_{\Lambda_{+k}}(f) = I(f)$ for any $f \in \bbP_{\Lambda_{+k}} \otimes \cS$. This completes the induction and concludes the equality \eqref{eq:ExactLambda}.
%The equality \eqref{eq:QuadLambda} is a result of \eqref{eq:ExactLambda} and the fact that $H_\bsnu \in \bbP_{\Lambda}, \; \forall \bsnu \in \Lambda$.

To check \eqref{eq:QuadBound}, by the definition of the sparse quadrature in \eqref{eq:QuadLambdaIntegral} we have
\beq
|\cQ_{\Lambda \cap \cR_\bsnu} (H_{\bsnu})| = \Big | \sum_{\bsmu \in \Lambda \cap \cR_\bsnu} \triangle_\bsmu(H_\bsnu) \Big | \leq \sum_{\bsmu \in \Lambda \cap \cR_\bsnu} |\triangle_\bsmu(H_\bsnu)| \leq \sum_{\bsmu \in \cR_\bsnu} |\triangle_\bsmu(H_\bsnu)|\;.
\eeq
By the definition of $\triangle_\bsmu$ in \eqref{eq:TensorDiff}, we have 
\beq
|\triangle_\bsmu(H_\bsnu)| \leq \prod_{j \in \bbJ_{\bsmu}} |\cQ_{\mu_j}(H_{\nu_j}) - \cQ_{\mu_j - 1}(H_{\nu_j})| \leq \prod_{j \in \bbJ_{\bsmu}}4 = 4^{|\bbJ_\bsmu|}\;,
\eeq
where the second bound is due to the assumption A.2. Therefore, we have 
\beq
 \sum_{\bsmu \in \cR_\bsnu} |\triangle_\bsmu(H_\bsnu)| \leq  \sum_{\bsmu \in \cR_\bsnu} 4^{|\bbJ_\bsmu|} \leq \sum_{\bsmu \in \cR_\bsnu} 4^{|\bbJ_\bsnu|} = \prod_{j \in \bbJ_\bsnu} 4(1+\nu_j) \leq \prod_{j \in \bbJ_\bsnu} (1+\nu_j)^3\;,
\eeq
where for the equality we have used $\sum_{\bsmu \in \cR_\bsnu} 1 = \prod_{j \in \bbJ_\bsnu} (1+\nu_j)$ and for
last inequality we have used $4(1+n) \leq (1+n)^3$ for $n \geq 1$, which completes the proof.
\end{proof}

The following lemma bounds the quadrature error $||I(f) - \cQ_{\Lambda_N}(f)||_\cS$ in terms of the weighted $\ell^1$-norm of the Hermite coefficient $\{||f_{\bsnu}||_\cS\}_{\bsnu \in \cF\setminus \Lambda_N}$. Similar results using Legendre polynomial expansion and triangular inequality can be found in \cite[Lemma 4.2]{chkifa2014high} and \cite[Lemma 4.5]{Schillings2013} for interpolation and integration with uniform measure. Instead of relying on the Lebesgue constant in these papers, we use the orthogonality of the Hermite polynomials and the bound in assumption A.2. 
\begin{lemma}\label{lemma:WeightedSum}
Under Assumption \ref{ass:Quadrature}, for any $f\in L^2_{\bsgamma}(Y,\cS)$, we have that for any $N \in \bbN$, there exists an admissible index set $\Lambda_N \subset \cF$ with $|\Lambda_N| = N$, such that
\beq\label{eq:weightedSum}
||I(f) - \cQ_{\Lambda_N}(f)||_\cS \leq \sum_{\bsnu \in \cF\setminus \Lambda_N}c_\bsnu ||f_\bsnu||_\cS \;,
\eeq
where $c_\bsnu := \prod_{j\in \bbJ_\bsnu}(1+\nu_j)^3$, the upper bound obtained in \eqref{eq:QuadBound}.
\end{lemma}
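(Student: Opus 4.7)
\medskip

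\noindent\textbf{Proof plan.} The idea is to expand $f$ in the Hermite basis \eqref{eq:HermiteExpansion}, apply the linear operators $I$ and $\cQ_{\Lambda_N}$ term-by-term, and split the resulting sum according to whether $\bsnu\in\Lambda_N$ or not. For $\bsnu\in\Lambda_N$ the exactness identity \eqref{eq:QuadLambda} gives $(I-\cQ_{\Lambda_N})(H_\bsnu)=0$. For $\bsnu\in\cF\setminus\Lambda_N$ with $\bsnu\ne\bszero$, orthogonality to $H_\bszero\equiv 1$ forces $I(H_\bsnu)=0$, leaving only $-\cQ_{\Lambda_N}(H_\bsnu)\,f_\bsnu$; the case $\bsnu=\bszero$ does not arise because $\bszero$ lies in every nonempty admissible $\Lambda_N$. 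Thus the lemma reduces to bounding $|\cQ_{\Lambda_N}(H_\bsnu)|$ by $c_\bsnu$ for each $\bsnu\notin\Lambda_N$.

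The crux is the restriction identity $\cQ_{\Lambda_N}(H_\bsnu)=\cQ_{\Lambda_N\cap\cR_\bsnu}(H_\bsnu)$, after which a direct appeal to \eqref{eq:QuadBound} finishes the job. I would establish it via the stronger pointwise statement $\triangle_\bsmu(H_\bsnu)=0$ whenever $\bsmu\not\preceq\bsnu$. The tensor-product structure of $\triangle_\bsmu$ and $H_\bsnu$ factorizes $\triangle_\bsmu(H_\bsnu)$ into univariate differences $\cQ_{\mu_j}(H_{\nu_j})-\cQ_{\mu_j-1}(H_{\nu_j})$ for $j\in\bbJ_\bsmu$; if $\bsmu\not\preceq\bsnu$ there is some such $j$ with $\mu_j>\nu_j$, and A.1 makes both quadratures exact on $H_{\nu_j}$, so the factor equals $I(H_{\nu_j})-I(H_{\nu_j})=0$. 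The coordinates $j\in\bbJ_\bsmu\setminus\bbJ_\bsnu$ are handled identically since there $H_{\nu_j}=H_0\equiv 1$ and constants are integrated exactly.

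Assembling the pieces and taking $\cS$-norms, the triangle inequality applied to the Banach-valued series yields
\[
\|I(f)-\cQ_{\Lambda_N}(f)\|_\cS\leq\sum_{\bsnu\in\cF\setminus\Lambda_N}|\cQ_{\Lambda_N\cap\cR_\bsnu}(H_\bsnu)|\,\|f_\bsnu\|_\cS\leq\sum_{\bsnu\in\cF\setminus\Lambda_N}c_\bsnu\,\|f_\bsnu\|_\cS,
\]
and any admissible set of cardinality $N$ (e.g., $\{j\bse_1:0\le j\le N-1\}$) serves as $\Lambda_N$. The only delicate step I anticipate is technical rather than conceptual: justifying the interchange of $\cQ_{\Lambda_N}$ with an $L^2_\bsgamma$-convergent series, since $\cQ_{\Lambda_N}$ is built from point evaluations. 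Because $\cQ_{\Lambda_N}=\sum_{\bsmu\in\Lambda_N}\triangle_\bsmu$ is a finite combination of tensorized univariate quadratures acting on finitely many coordinates, the interchange is routine on Hermite partial sums and passes to the limit either by the absolute summability encoded on the right-hand side of the claim or by approximating $f$ first by a smooth, finitely supported truncation.
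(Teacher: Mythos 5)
Your proposal is correct and follows essentially the same route as the paper's own proof: Hermite expansion, exactness \eqref{eq:QuadLambda} on $\Lambda_N$, $I(H_\bsnu)=0$ for $\bsnu\neq\bszero$, the restriction identity $\cQ_{\Lambda_N}(H_\bsnu)=\cQ_{\Lambda_N\cap\cR_\bsnu}(H_\bsnu)$ via A.1 and the tensor-product factorization, and finally the bound \eqref{eq:QuadBound}. Your extra remark on justifying the term-by-term application of $\cQ_{\Lambda_N}$ to the $L^2_\bsgamma$-convergent series is a point the paper leaves implicit, and your handling of it is adequate.
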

\begin{proof} 
As $f\in L^2_{\bsgamma}(Y,\cS)$, we have the polynomial expansion of $f$ on the Hermite series as in \eqref{eq:HermiteExpansion}, so that 
\beq
\cQ_{\Lambda_N} (f) = \cQ_{\Lambda_N} \left( \sum_{\bsnu \in \cF} f_{\bsnu} H_{\bsnu} \right) =  \sum_{\bsnu \in \Lambda_N} f_{\bsnu} \cQ_{\Lambda_N} (H_{\bsnu}) + \sum_{\bsnu \in \cF \setminus \Lambda_N} f_{\bsnu} \cQ_{\Lambda_N} (H_{\bsnu})\;.
\eeq
Therefore, by the identity \eqref{eq:QuadLambda} we obtain
\beq\label{eq:IQError}
||I(f) - \cQ_{\Lambda_N} (f)||_\cS \leq \sum_{\bsnu \in \cF \setminus \Lambda_N} ||f_{\bsnu}||_\cS |(I-\cQ_{\Lambda_N}) (H_{\bsnu})|
\eeq
For any $\bsnu \in \cF\setminus \bsnull$, there exists $j \in \bbN$ such that $\nu_j \neq 0$, for which we have $I_j(H_{\nu_j}) = 0$ due to the orthogonality of $H_{\nu_j}$, hence
\beq
I(H_\bsnu) = \prod_{j\geq 1} I_j(H_{\nu_j}) = 0\;.
\eeq
Moreover, for any $\bsnu \in \cF$, we have 
\beq
\begin{split}
\cQ_{\Lambda_N}(H_\bsnu)& = \sum_{\mu \in \Lambda_N} \triangle_\bsmu(H_\bsnu) \\
&= \sum_{\mu \in \Lambda_N} \prod_{j \geq 1} (\cQ_{\mu_j}(H_{\nu_j}) - \cQ_{\mu_j-1}(H_{\nu_j}))\\
& = \sum_{\mu \in \Lambda_N \cap \cR_\bsnu} \prod_{j \geq 1} (\cQ_{\mu_j}(H_{\nu_j}) - \cQ_{\mu_j-1}(H_{\nu_j}))\\
&= \sum_{\mu \in \Lambda_N \cap \cR_\bsnu} \triangle_\bsmu(H_\bsnu) = \cQ_{\Lambda_N \cap \cR_\bsnu}(H_\bsnu)
\;,
\end{split}
\eeq
where the third equality is due to the assumption A.1. As a result, \eqref{eq:IQError} becomes 
\beq
||I(f) - \cQ_{\Lambda_N} (f)||_\cS \leq \sum_{\bsnu \in \cF \setminus \Lambda_N} ||f_{\bsnu}||_\cS |\cQ_{\Lambda_N \cap \cR_\bsnu}(H_\bsnu)| \leq \sum_{\bsnu \in \cF \setminus \Lambda_N} c_\bsnu  ||f_{\bsnu}||_\cS\;,
\eeq
which completes the proof by using the bound \eqref{eq:QuadBound}.
\end{proof}

In order to control the quadrature error, which is bounded by a weighted sum of the Hermite coefficients as above, we make the following assumptions from \cite[Theorem 3.3]{bachmayr2017sparse} on the derivatives of the function $f$ with respect to the parameter $\bsy$.

\begin{assumption}\label{ass:DeriBound}
\begin{enumerate}
\item[B.1]
Let 
%$0< p < 2$, $q = \frac{2p}{2-p}$
$0 < q < 2$
, and $(\tau_j)_{j\geq 1}$ be a positive sequence such that 
\beq\label{eq:lqtauj}
(\tau_j^{-1})_{j\geq 1} \in \ell^q(\bbN)\;.
\eeq 
\item[B.2]
Let $r$ be the smallest integer such that $r > 14/q$, we assume $\partial^\bsmu_\bsy f \in L^2_{\bsgamma}(Y,\cS)$ and there holds 
\beq\label{eq:boundedinteg}
\sum_{|\bsmu|_\infty \leq r} \frac{\bstau^{2\bsmu}}{\bsmu!} \int_{Y} ||\partial^\bsmu_\bsy f(\bsy)||^2_\cS d\bsgamma(\bsy) < \infty\;,
\eeq 
where $\bstau^{2\bsmu} = \prod_{j\geq 1} \tau_j^{2\mu_j}$, $\bsmu ! = \prod_{j \geq 1} \mu_j!$, and $\partial_\bsy^\bsmu f(\bsy) = \left(\prod_{j \geq 1} \partial_{y_j}^{\mu_j} \right) f(\bsy)$.
\end{enumerate}
\end{assumption}

\begin{remark}
Assumption \ref{ass:DeriBound} characterizes the relation between the regularity of the function $f$ with respect to the parameter $\bsy$ and sparsity of the parametrization, i.e., the anisotropic property of the function with respect to different dimensions. The smaller $q$ is, the faster $\tau_j$ grows, so the faster $\partial_\bsy^{\bsmu}f(\bsy)$ decays with respect to $j$, and as $r > 14/q$ becomes larger, the higher orders of derivative are needed. We will present two examples in the next section to verify Assumption \ref{ass:DeriBound} and illustrate this discussion.
\end{remark} 

The following result establishes the equivalence between the weighted summability of the integral of the mixed derivatives and the weighted summability of the Hermite coefficients, which is the key to bring the sparsity of the parametrization to the dimension-independent convergence rate. 

\begin{proposition}\label{prop:EquivFourierDeri}
\cite[Theorem 3.3, Lemma 5.1]{bachmayr2017sparse}
Under Assumption \ref{ass:DeriBound}, we have 
\beq
\sum_{|\bsmu|_\infty \leq r} \frac{\bstau^{2\bsmu}}{\bsmu!} \int_{Y} ||\partial^\bsmu_\bsy f(\bsy)||^2_\cS d\bsgamma(\bsy) = \sum_{\bsnu \in \cF} b_{\bsnu} ||f_\bsnu||_\cS^2\;,
\eeq
where the weights $b_\bsnu$ given by 
\beq\label{eq:bnu}
b_\bsnu = \sum_{|\bsmu|_\infty\leq r} 
\left(
\begin{array}{cc}
\bsnu \\
\bsmu
\end{array}
\right) \bstau^{2\bsmu}, \quad \text{ with } 
\left(
\begin{array}{cc}
\bsnu \\
\bsmu
\end{array}
\right) = 
\prod_{j \geq 1}
\left(
\begin{array}{cc}
\nu_j \\
\mu_j
\end{array}
\right)\;,
\eeq
satisfies the summability condition 
\beq\label{eq:bsnuq2} 
\sum_{\bsnu \in \cF} b_\bsnu^{-q/2} < \infty 
\eeq
for any integer $r$ such that $r > 2/q$.
\end{proposition}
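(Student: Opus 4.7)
The plan is to decompose the statement into two halves: first derive the identity relating weighted $L^2$-sums of mixed derivatives to the weighted Hermite coefficient sum, and then separately verify the summability of $b_\bsnu^{-q/2}$.

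For the identity, I would start from the derivative formula for the univariate orthonormal Hermite polynomials, $H_n'(y) = \sqrt{n}\,H_{n-1}(y)$ (a direct consequence of the Rodrigues-type formula \eqref{eq:Hermite}), iterated to $\partial_y^{\mu}H_n = \sqrt{n!/(n-\mu)!}\,H_{n-\mu}$ for $\mu \le n$ and zero otherwise. Tensorising across coordinates yields $\partial_\bsy^\bsmu H_\bsnu = \sqrt{\bsnu!/(\bsnu-\bsmu)!}\,H_{\bsnu-\bsmu}$ when $\bsmu\preceq\bsnu$, and zero otherwise. Applying this term by term to the Hermite expansion \eqref{eq:HermiteExpansion} of $f$ and invoking Parseval's identity in $L^2_\bsgamma(Y,\cS)$ delivers
$$\int_Y \|\partial_\bsy^\bsmu f(\bsy)\|_\cS^2\,d\bsgamma(\bsy) \;=\; \sum_{\bsnu\succeq\bsmu}\frac{\bsnu!}{(\bsnu-\bsmu)!}\,\|f_\bsnu\|_\cS^2.$$
Multiplying by $\bstau^{2\bsmu}/\bsmu!$, summing over $|\bsmu|_\infty\le r$, and swapping the order of summation (legitimate because all terms are non-negative and the total is finite by B.2), the factor $\bsnu!/(\bsmu!(\bsnu-\bsmu)!)$ collapses to $\binom{\bsnu}{\bsmu}$ and the claimed identity with $b_\bsnu$ as in \eqref{eq:bnu} falls out.

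For the summability, the structural observation is that the constraint $|\bsmu|_\infty \le r$ decouples into the per-coordinate constraints $\mu_j \le \min(r,\nu_j)$, so $b_\bsnu$ factorises as
$$b_\bsnu \;=\; \prod_{j\ge 1} a_{\nu_j}(\tau_j), \qquad a_n(\tau) := \sum_{\mu=0}^{\min(r,n)} \binom{n}{\mu}\,\tau^{2\mu},$$
and hence, by Tonelli-type expansion over finite supports,
$$\sum_{\bsnu\in\cF} b_\bsnu^{-q/2} \;=\; \prod_{j\ge 1} S_j, \qquad S_j := \sum_{n\ge 0} a_n(\tau_j)^{-q/2}.$$
It therefore suffices to control each $S_j - 1$ so that $\sum_j (S_j - 1) < \infty$; by $\log(1+x)\le x$ the infinite product then converges.

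To bound $S_j$ I would split the inner sum at the threshold $n = r$. For $1 \le n \le r$, $a_n(\tau_j) = (1+\tau_j^2)^n \ge \tau_j^{2n}$, so those terms contribute at most $C\,\tau_j^{-q}$ once $\tau_j$ is large enough. For $n > r$, discarding all but the $\mu = r$ term gives the crucial bound $a_n(\tau_j) \ge \binom{n}{r}\tau_j^{2r}$, leading to a contribution of at most $C\,\tau_j^{-rq}\sum_{n>r}\binom{n}{r}^{-q/2}$. Since $\binom{n}{r}\sim n^r/r!$, the tail sum converges \emph{precisely} when $rq/2 > 1$, i.e., $r > 2/q$; this is the sole place the threshold condition is used, and it is the main obstacle to handle cleanly. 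Combining the two regimes gives $S_j \le 1 + C\,\tau_j^{-q} + C'\,\tau_j^{-rq}$ for $\tau_j$ large, and since $(\tau_j^{-1})\in \ell^q$ by B.1 (with $(\tau_j^{-1})\in\ell^{rq}$ following for free from $\tau_j\to\infty$ when $r\ge 1$), $\sum_j(S_j - 1) < \infty$, hence $\prod_j S_j < \infty$, completing the argument.
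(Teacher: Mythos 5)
The paper never proves this proposition itself --- both the identity and the summability \eqref{eq:bsnuq2} are imported verbatim from \cite[Theorem 3.3, Lemma 5.1]{bachmayr2017sparse} --- so there is no in-paper proof to compare against; judged on its own, your reconstruction is correct and is essentially the argument of the cited reference. The identity half follows the standard route ($H_n'=\sqrt{n}\,H_{n-1}$, tensorization, Parseval, Tonelli to exchange the $\bsmu$- and $\bsnu$-sums, and the automatic vanishing of $\binom{\bsnu}{\bsmu}$ for $\bsmu\not\preceq\bsnu$), and your summability half is exactly the technique the paper itself deploys in the proof of Lemma \ref{lemm:cnubnu}: factorize $b_\bsnu=\prod_{j\ge 1}\sum_{l=0}^{\min(r,\nu_j)}\binom{\nu_j}{l}\tau_j^{2l}$, reduce $\sum_{\bsnu\in\cF}b_\bsnu^{-q/2}$ to a product of univariate sums $S_j$, keep only the $l=\min(n,r)$ term, and use $r>2/q$ to make the tail $\sum_{n>r}\binom{n}{r}^{-q/2}$ finite together with $(\tau_j^{-1})_{j\ge1}\in\ell^q(\bbN)$ to control the product --- you also correctly isolate $r>2/q$ as the threshold for \eqref{eq:bsnuq2}, the stronger $r>14/q$ in Assumption \ref{ass:DeriBound} being needed only later for Lemma \ref{lemm:cnubnu}. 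Two small points deserve a line each if this is written out in full: (i) termwise differentiation of the Hermite expansion is used formally; since B.2 assumes $\partial^\bsmu_\bsy f\in L^2_\bsgamma(Y,\cS)$, you should identify the Hermite coefficients of $\partial^\bsmu_\bsy f$ rigorously, e.g.\ by integration by parts against $H_{\bsnu-\bsmu}$ (the Gaussian adjoint of $\partial_y$ sends $H_n$ to $\sqrt{n+1}\,H_{n+1}$), which yields the claimed coefficient $\sqrt{\bsnu!/(\bsnu-\bsmu)!}\,f_\bsnu$; (ii) your bound $S_j\le 1+C\tau_j^{-q}+C'\tau_j^{-rq}$ is stated only for $\tau_j$ large, so, exactly as the paper does with its threshold $J_\tau$ in Lemma \ref{lemm:cnubnu}, add the observation that each of the finitely many remaining factors is itself finite (the estimate $a_n(\tau_j)\ge\binom{n}{r}\tau_j^{2r}$ already gives $S_j<\infty$ for every $\tau_j>0$ once $r>2/q$), so that $\prod_j S_j$ converges. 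With these two easy patches your argument is a complete and faithful stand-in for the cited result.
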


%Note that a further so-called property $M_k$ is assumed in \cite{{bachmayr2017sparse}} to ensure that each of the integrals  in \eqref{eq:boundedinteg} is finite, which we do not need here as it is guaranteed by Assumption B.2. 

Based on the summability \eqref{eq:bsnuq2} and its proof, we obtain the following result. 
\begin{lemma}\label{lemm:cnubnu}
Under Assumption \ref{ass:DeriBound}, 
for any $\eta \geq q/4$, we have 
\beq\label{eq:bnucnu}
\sum_{\bsnu \in \cF} \left(\frac{b_\bsnu}{c_\bsnu^{1/\eta}}\right)^{-2\eta} < \infty\;.
\eeq
\end{lemma}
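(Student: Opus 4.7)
The plan is to reduce the series to an infinite product over coordinates $j \geq 1$, and then close the argument using the summability of $(\tau_j^{-1})_{j \geq 1}$ in $\ell^q(\bbN)$ from B.1. Writing $(b_\bsnu/c_\bsnu^{1/\eta})^{-2\eta} = c_\bsnu^{2} b_\bsnu^{-2\eta}$, the task is to show $\sum_{\bsnu \in \cF} c_\bsnu^2 b_\bsnu^{-2\eta} < \infty$; the difficulty lies in the growing weight $c_\bsnu^2 = \prod_{j \in \bbJ_\bsnu}(1+\nu_j)^6$, which must be compensated by the decay of $b_\bsnu^{-2\eta}$.

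The first step is to extract from the sum defining $b_\bsnu$ in \eqref{eq:bnu} the single term corresponding to $\bsmu^\ast$ with $\mu_j^\ast := \min(\nu_j, r)$; this is admissible since $|\bsmu^\ast|_\infty \leq r$, and yields
\[
b_\bsnu \;\geq\; \binom{\bsnu}{\bsmu^\ast}\,\bstau^{2\bsmu^\ast} \;=\; \prod_{j\geq 1}\binom{\nu_j}{\min(\nu_j,r)}\,\tau_j^{2\min(\nu_j,r)}.
\]
Since both this lower bound and the weight $c_\bsnu^2$ factor over coordinates, the summand factors as a product of per-coordinate quantities that equal $1$ when $\nu_j = 0$, so the sum over finitely supported $\bsnu \in \cF$ rearranges into
\[
\sum_{\bsnu \in \cF} c_\bsnu^2 b_\bsnu^{-2\eta} \;\leq\; \prod_{j \geq 1}(1+S_j), \qquad S_j \;:=\; \sum_{\nu \geq 1} (1+\nu)^6 \binom{\nu}{\min(\nu,r)}^{-2\eta}\tau_j^{-4\eta\min(\nu,r)}.
\]

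I would then estimate $S_j$ by splitting at $\nu = r$. The part $\nu < r$ has binomial coefficient $1$ and is a finite sum whose leading (at $\nu = 1$) term is $2^6\tau_j^{-4\eta}$. For $\nu \geq r$, the elementary estimate $\binom{\nu}{r} \geq C_r(1+\nu)^r$ with $C_r > 0$ reduces the tail to a constant times $\tau_j^{-4\eta r}\sum_{\nu \geq r}(1+\nu)^{6-2\eta r}$, which converges iff $2\eta r > 7$. The worst case $\eta = q/4$ gives the sharp threshold $r > 14/q$, which is exactly the condition in B.2. Since $\tau_j \to \infty$ (as $\tau_j^{-1} \in \ell^q$), for all sufficiently large $j$ we have $\tau_j \geq 1$, and then $\tau_j^{-4\eta\min(\nu,r)} \leq \tau_j^{-4\eta}$, so $S_j \leq C\tau_j^{-4\eta}$ with $C$ independent of $j$.

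Finally, using $\log(1+x) \leq x$ and $4\eta \geq q$, I obtain $\log\prod_j(1+S_j) \leq \sum_j S_j \leq C \sum_j \tau_j^{-4\eta} \leq C \sum_j \tau_j^{-q} < \infty$ by B.1, which completes the proof. The main obstacle I expect is the tail estimate: the polynomial weight $(1+\nu)^6$ in $c_\bsnu^2$ originates from the cube in the sparse-quadrature bound \eqref{eq:QuadBound}, and it is precisely this sixth power that raises the required regularity threshold from $r > 2/q$ in Proposition~\ref{prop:EquivFourierDeri} to $r > 14/q$ in B.2; carefully tracking this constant chain, together with justifying the factorization for finitely supported $\bsnu$, is the crux of the argument.
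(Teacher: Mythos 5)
Your proof is correct and follows essentially the same route as the paper's: factorize the sum over coordinates, lower-bound $b_\bsnu$ by retaining only the term $\bsmu$ with $\mu_j=\min(\nu_j,r)$, control the tail $\sum_{\nu\geq r}(1+\nu)^{6-2\eta r}$ via $2\eta r>7$ (which at $\eta=q/4$ is exactly $r>14/q$), and conclude with $\log(1+x)\leq x$ and the $\ell^q$-summability of $(\tau_j^{-1})$. The only cosmetic difference is that the final chain $\sum_j S_j\leq C\sum_j\tau_j^{-q}$ needs the product split at the finitely many indices with $\tau_j<1$ (each such $S_j$ is still finite), precisely as the paper does with its $J_\tau$.
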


\begin{proof}
By the definition of $b_\bsnu $ in \eqref{eq:bnu}, we can rewrite it as
\beq
b_\bsnu = \prod_{j\geq 1} \left(
\sum_{l = 0}^r
\left(
\begin{array}{cc}
\nu_j \\
l
\end{array}
\right)
\tau_j^{2l}
\right)\;.
\eeq
Then the left hand side of \eqref{eq:bnucnu} can be written in the factorized form as 
\beq
\begin{split}
\sum_{\bsnu \in \cF} \left(\frac{b_\bsnu}{c_\bsnu^{1/\eta}}\right)^{-2\eta}& = \sum_{\bsnu \in \cF} \prod_{j\geq 1}
\left(
\sum_{l=0}^r
\left(
\begin{array}{cc}
\nu_j \\
l
\end{array}
\right)
\tau_j^{2l}
\frac{1}{(1+\nu_j)^{3/\eta}}
\right)^{-2\eta}\\
& = \prod_{j\geq 1} 
\sum_{n\geq 0} 
\left(
\sum_{l=0}^r
\left(
\begin{array}{cc}
n \\
l
\end{array}
\right)
\tau_j^{2l}
\frac{1}{(1+n)^{3/\eta}}
\right)^{-2\eta}\;,
\end{split}  
\eeq 
as long as we can show that the product on the right hand side is finite. Now we have 
\beq\label{eq:subWeightSumD}
\begin{split}
&\sum_{n\geq 0} 
\left(
\sum_{l=0}^r
\left(
\begin{array}{cc}
n \\
l
\end{array}
\right)
\tau_j^{2l}
\frac{1}{(1+n)^{3/\eta}}
\right)^{-2\eta}\\
&\leq \sum_{n \geq 0} 
\left(
\left(
\begin{array}{cc}
n \\
n \wedge r
\end{array}
\right)
\tau_j^{2(n\wedge r)}
\frac{1}{(1+n)^{3/\eta}}
\right)^{-2\eta}\\
& \leq 1 + 2^{6} \tau_j^{-4\eta} + \cdots +r^{6} \tau_j^{-4\eta (r-1)} + C_{r,\eta} \tau_j^{- 4\eta r} =: d_j(r,\eta,\tau_j)\;,
\end{split}
\eeq
where in the first inequality we have only kept the term $l = n \wedge r =\min\{n, r\}$, and the constant $C_{r,\eta}$ is defined as
\beq
C_{r,\eta} = \sum_{n\geq r} 
\left(
\left(
\begin{array}{cc}
n\\
r
\end{array}
\right)
\frac{1}{(1+n)^{3/\eta}}
\right)^{-2\eta}= 
(r!)^{2\eta} \sum_{n\geq 0}
\left(
\frac{(n+1)\cdots(n+r)}{(1+n+r)^{3/\eta}}
\right)^{-2\eta}\;.
\eeq
As the term in the big parentheses grows as $n^{r-3/\eta}$ when $n \to \infty$, 
and $2\eta (r-3/\eta) > 1$ for any $\eta \geq q/4$ when $r > 14/q$, so that $C_{r,\eta} < \infty$. Since $(\tau_j^{-1})_{j \geq 1} \in \ell^q(\bbN)$ by Assumption \ref{ass:DeriBound}, we have $\tau_j \to \infty$ as $j \to \infty$, so that there exists $J_\tau < \infty$ such that $\tau_j > 1$ for all $j > J_\tau$. For $j > J_\tau$, we can bound the right hand side of \eqref{eq:subWeightSumD} by 
\beq
d_j(r,\eta,\tau_j) \leq 1+ (2^{6}  + \cdots + r^{6} +  C_{r,\eta}) \tau_j^{-4\eta}\;. 
\eeq
Consequently,  by setting $D_{r,\eta} = 2^{6}  + \cdots + r^{6} +  C_{r,\eta}$, we have
\beq\label{eq:bncneta2}
\sum_{\bsnu \in \cF} \left(\frac{b_\bsnu}{c_\bsnu^{1/\eta}}\right)^{-2\eta} \leq \prod_{j\geq 1} d_j(r,\eta,\tau_j) \leq \prod_{1\leq j \leq J_\tau} d_j(r,\eta,\tau_j) \prod_{j > J_\tau} (1+ D_{r,\eta} \tau_j^{-4\eta})\;,
\eeq
where the first term is bounded as $J_\tau < \infty$. The second term can be written as
\beq
\prod_{j > J_\tau} (1+ D_{r,\eta} \tau_j^{-4\eta}) = \exp\left(\sum_{j > J_\tau}\log\left(1+D_{r,\eta}\tau_j^{-4\eta}\right) \right) \;,
\eeq
which, by using  $\log(1+x) \leq x$ for all $x> -1$, can be bounded by 
\beq\label{eq:expDrq}
\exp\left(\sum_{j > J_\tau}\log\left(1+D_{r,\eta}\tau_j^{-4\eta}\right) \right) \leq \exp\left(D_{r,\eta} \sum_{j > J_\tau}  \tau_j^{-4\eta}\right),
\eeq
which is finite when $\eta \geq q/4$ since $(\tau_j^{-1})_{j\geq 1} \in \ell^q(\bbN)$ in Assumption \ref{ass:DeriBound}. Hence, \eqref{eq:bnucnu} is concluded by \eqref{eq:bncneta2} and \eqref{eq:expDrq}.
\end{proof}

We are at the point to state and prove the main theorem. The idea behind the proof is from the short discussion in \cite[Remark 5.1]{bachmayr2017sparse} and the result \cite[Lemma 2.9]{ZS17_723}.
\begin{theorem}\label{thm:N-termConv}
Under Assumption \ref{ass:Quadrature} and \ref{ass:DeriBound}, there exists an admissible index set $\Lambda_N \subset \cF$, a set of indices corresponding to the $N$ smallest value of $b_\bsnu$ defined in \eqref{eq:bnu}, such that the sparse quadrature error is bounded by
\beq\label{eq:quaderror}
||I(f)- \cQ_{\Lambda_N}(f)||_{\cS} \leq C (N+1)^{-s}, \quad s = \frac{1}{q} - \frac{1}{2}\;,
\eeq 
where the constant $C$ is independent of $N$.
\end{theorem}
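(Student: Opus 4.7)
The plan is to start from the error bound of Lemma \ref{lemma:WeightedSum}, apply a Cauchy--Schwarz factorization that matches the weighted $L^2$ identity of Proposition \ref{prop:EquivFourierDeri}, and then convert the (mere) summability supplied by Lemma \ref{lemm:cnubnu} into an algebraic rate via a Stechkin-type tail estimate on the weights $b_\bsnu$.

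First I would specify $\Lambda_N$ as the set of $N$ indices $\bsnu \in \cF$ corresponding to the smallest values of $b_\bsnu$ (with an arbitrary tie-breaking rule that respects $\preceq$). For admissibility, note that $\binom{\bsmu'}{\bsmu} \leq \binom{\bsnu}{\bsmu}$ whenever $\bsmu' \preceq \bsnu$, so the expression \eqref{eq:bnu} shows that $b_{\bsmu'} \leq b_\bsnu$ whenever $\bsmu' \preceq \bsnu$; with the right tie-breaking this forces $\Lambda_N$ to be downward closed. Let $b^\star := \min_{\bsnu \in \cF \setminus \Lambda_N} b_\bsnu$, so that $b_\bsnu \geq b^\star$ on the tail and $b_\bsnu \leq b^\star$ on $\Lambda_N$.

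Starting from Lemma \ref{lemma:WeightedSum} and splitting $c_\bsnu \|f_\bsnu\|_\cS = (c_\bsnu b_\bsnu^{-1/2})(b_\bsnu^{1/2}\|f_\bsnu\|_\cS)$, Cauchy--Schwarz gives
\begin{equation*}
\|I(f)-\cQ_{\Lambda_N}(f)\|_\cS \leq \Bigl(\sum_{\bsnu \in \cF \setminus \Lambda_N} \frac{c_\bsnu^2}{b_\bsnu}\Bigr)^{1/2}\Bigl(\sum_{\bsnu \in \cF} b_\bsnu \|f_\bsnu\|_\cS^2\Bigr)^{1/2}.
\end{equation*}
The second factor is a finite constant by Proposition \ref{prop:EquivFourierDeri}. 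For the first factor I would insert the algebraic identity $b_\bsnu^{-1} = b_\bsnu^{-q/2} b_\bsnu^{q/2-1}$; since $q<2$ the exponent $q/2-1$ is negative, so $b_\bsnu^{q/2-1} \leq (b^\star)^{q/2-1}$ on $\cF \setminus \Lambda_N$, and Lemma \ref{lemm:cnubnu} at $\eta = q/4$ yields $M := \sum_{\bsnu \in \cF} c_\bsnu^2 b_\bsnu^{-q/2} < \infty$, whence
\begin{equation*}
\sum_{\bsnu \in \cF \setminus \Lambda_N} \frac{c_\bsnu^2}{b_\bsnu} \leq M\,(b^\star)^{q/2-1}.
\end{equation*}

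It remains to bound $b^\star$ from below in terms of $N$. Because $c_\bsnu \geq 1$, the same lemma gives $\sum_\bsnu b_\bsnu^{-q/2} \leq M$, and since every $\bsnu \in \Lambda_N$ has $b_\bsnu^{-q/2} \geq (b^\star)^{-q/2}$, summing over $\Lambda_N$ produces $N (b^\star)^{-q/2} \leq M$, i.e.\ $b^\star \geq (N/M)^{2/q}$. Plugging this back in and taking the square root produces the stated rate $\|I(f)-\cQ_{\Lambda_N}(f)\|_\cS \leq C\,N^{-(1/q-1/2)}$, which is equivalent to the desired $(N+1)^{-s}$ up to the constant. The main obstacle is identifying the correct exponents so that the two finiteness results (the $L^2$-bound on $\{b_\bsnu^{1/2}\|f_\bsnu\|_\cS\}$ and the summability of $\{c_\bsnu^2 b_\bsnu^{-q/2}\}$) match up simultaneously through Cauchy--Schwarz; once the splitting with $\alpha=1$ and $\eta=q/4$ is chosen, everything else is bookkeeping. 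The admissibility of the $N$-best set relative to $b_\bsnu$ is the only other point that needs checking, and it is handled by monotonicity of $b_\bsnu$ along $\preceq$.
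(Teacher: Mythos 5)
Your proposal is correct, and it follows the same overall architecture as the paper's proof: the error bound of Lemma \ref{lemma:WeightedSum}, a Cauchy--Schwarz step against the weighted Parseval quantity $\sum_{\bsnu} b_\bsnu \|f_\bsnu\|_\cS^2$ from Proposition \ref{prop:EquivFourierDeri}, the summability $\sum_{\bsnu} c_\bsnu^2 b_\bsnu^{-q/2} < \infty$ (Lemma \ref{lemm:cnubnu} at $\eta = q/4$), and admissibility of the $N$-smallest-$b_\bsnu$ set via monotonicity of $b_\bsnu$ along $\preceq$, exactly as in \eqref{eq:bnuekgeqbnu}. Where you differ is in how the $N$-dependence is extracted. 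The paper first pulls out $\sup_{\bsnu \in \cF\setminus\Lambda_N} b_\bsnu^{-1/2+\eta}$, keeps $c_\bsnu b_\bsnu^{-\eta}$ inside the Cauchy--Schwarz factor, identifies the sup with the $(N{+}1)$st term $d_{N+1}$ of a decreasing rearrangement, and then proves $d_{N+1} \lesssim (N+1)^{-s}$ by a H\"older argument with two cases ($\tilde q<1$ and $\tilde q\ge 1$). You instead apply Cauchy--Schwarz directly with the split $c_\bsnu b_\bsnu^{-1/2}\cdot b_\bsnu^{1/2}\|f_\bsnu\|_\cS$ and obtain the rate from a Chebyshev-type counting bound: $N(b^\star)^{-q/2} \le \sum_{\bsnu} b_\bsnu^{-q/2} \le M$ forces the threshold $b^\star$ up, and writing $b_\bsnu^{-1}=b_\bsnu^{-q/2}b_\bsnu^{q/2-1}$ on the tail converts that into $\sum_{\bsnu\notin\Lambda_N} c_\bsnu^2 b_\bsnu^{-1} \le M (b^\star)^{q/2-1} \lesssim N^{1-2/q}$, giving $CN^{-s}$, which matches $(N+1)^{-s}$ up to a constant. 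Your exponent bookkeeping checks out (note $(b_\bsnu/c_\bsnu^{1/\eta})^{-2\eta}=c_\bsnu^2 b_\bsnu^{-2\eta}$, so Lemma \ref{lemm:cnubnu} at $\eta=q/4$ is precisely your $M<\infty$, and $c_\bsnu\ge 1$ justifies dropping it in the counting step). What your variant buys is a somewhat leaner final step: no explicit rearrangement and no case distinction on $\tilde q$; what it gives up is only cosmetic (the constant and the $N$ versus $N{+}1$ normalization). The only points worth making explicit, which you gloss over, are that the "$N$ smallest values of $b_\bsnu$" is well defined because $\sum_\bsnu b_\bsnu^{-q/2}<\infty$ implies only finitely many indices lie below any threshold (so the minimum $b^\star$ over the tail is attained), and that ties must be broken respecting $\preceq$ to keep $\Lambda_N$ downward closed — both minor and easily repaired.
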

\begin{proof}
We consider the right hand side of \eqref{eq:weightedSum} in Lemma \ref{lemma:WeightedSum}, which we can bound by multiplying and dividing $b_\bsnu^{-1/2+\eta}$ with $\eta \geq q/4$ as 
\beq\label{eq:cnufnubound}
\sum_{\bsnu \in \cF\setminus \Lambda_N}c_\bsnu ||f_\bsnu||_\cS \leq \sup_{\bsnu \in \cF\setminus \Lambda_N} b_\bsnu^{-1/2+\eta} \sum_{\bsnu \in \cF\setminus \Lambda_N} \frac{c_\bsnu}{b_\bsnu^{\eta}} b_\bsnu^{1/2} ||f_\bsnu||_\cS\;,
\eeq
where the second term can be bounded by using Cauchy--Schwarz inequality as 
\beq
\begin{split}
\sum_{\bsnu \in \cF\setminus \Lambda_N} \frac{c_\bsnu}{b_\bsnu^{\eta}} b_\bsnu^{1/2} ||f_\bsnu||_\cS 
&\leq 
\left(
\sum_{\bsnu \in \cF\setminus \Lambda_N} \left(\frac{c_\bsnu}{b_\bsnu^{\eta}}\right)^2
\right)^{1/2}
\left(
\sum_{\bsnu \in \cF\setminus \Lambda_N} b_\bsnu  ||f_\bsnu||_\cS^2
\right)^{1/2}\\
& \leq 
\left(
\sum_{\bsnu \in \cF} \left(\frac{b_\bsnu}{c_\bsnu^{1/\eta}}\right)^{-2\eta}
\right)^{1/2}
\left(
\sum_{\bsnu \in \cF} b_\bsnu  ||f_\bsnu||_\cS^2
\right)^{1/2}\;,
\end{split}
\eeq
which is finite as a result of Lemma \ref{lemm:cnubnu} for the first term and Assumption \ref{ass:DeriBound} and Proposition \ref{prop:EquivFourierDeri} for the second. By an increasing rearrangement of the sequence $(b_\bsnu)_{\bsnu \in \cF}$, which is equivalent to a decreasing rearrangement of $(b_\bsnu^{-1/2+\eta})_{\bsnu \in \cF}$ for $\eta < 1/2$, which we denote as $(d_n)_{n \geq 1}$, the first term on the right hand side of \eqref{eq:cnufnubound} becomes
\beq
\sup_{\bsnu \in \cF\setminus \Lambda_N} b_\bsnu^{-(1-2\eta)/2} = d_{N+1}^{}\;.
\eeq
Since $(b_\bsnu^{-1/2})_{\bsnu \in \cF} \in \ell^{q}(\cF)$ as given in Proposition \ref{prop:EquivFourierDeri}, so that $(d_n)_{n \geq 1} \in \ell^{q/(1-2\eta)}(\bbN)$. As a result, by taking $\eta = q/4$, the smallest value for $\eta$, we have $(d_n)_{n \geq 1} \in \ell^{\tilde{q}}(\bbN)$ where $\tilde{q} = 2q/(2-q) \in (0, \infty)$ for $q \in (0, 2)$. 
As $(d_n)_{n \geq 1}$ is monotonically decreasing, when $\tilde{q} \in (0, 1)$, by H\"older's inequality for $s = \frac{1}{\tilde{q}}$ and its conjugate $t = \frac{1}{1-\tilde{q}}$ we obtain
\beq
d_{n}^{\tilde{q}^2} \leq \frac{1}{n} \sum_{i = 1}^n d_i^{\tilde{q}^2} \leq \frac{1}{n} \left(\left(d_i^{\tilde{q}^2}\right)^{s}\right)^{\frac{1}{s}} \left(\sum_{i = 1}^n 1^{t}\right)^{\frac{1}{t}} = \left(\sum_{i = 1}^n d_i^{\tilde{q}} \right)^{\tilde{q}} n^{-\tilde{q}}, 
\eeq
so that 
\beq
d_{n} \leq  \left(\sum_{i = 1}^n d_i^{\tilde{q}}\right)^{\frac{1}{\tilde{q}}} n^{-s} \leq \left(\sum_{i = 1}^\infty d_i^{\tilde{q}}\right)^{\frac{1}{\tilde{q}}}  n^{-s},  
\eeq
For $\tilde{q} \geq 1$, again by H\"older's inequality for $\tilde{q}$ and its conjugate $t = \frac{1}{1-s}$ where $s = \frac{1}{\tilde{q}}$ we have 
\beq
d_n  \leq n^{-1} \sum_{i = 1}^n d_i \leq n^{-1} \left(\sum_{i=1}^n d_i^{\tilde{q}} \right)^{\frac{1}{\tilde{q}}} \left(\sum_{i=1}^n 1^{t} \right)^{1- s} \leq  \left(\sum_{i=1}^\infty d_i^{\tilde{q}} \right)^{\frac{1}{\tilde{q}}} n^{-s}. 
\eeq
Consequently, the main result \eqref{eq:quaderror} holds with the constant 
\beq
C = \left(\sum_{i=1}^\infty d_i^{\tilde{q}} \right)^{\frac{1}{\tilde{q}}} \left(
\sum_{\bsnu \in \cF} \left(\frac{b_\bsnu}{c_\bsnu^{1/\eta}}\right)^{-2\eta}
\right)^{1/2}
\left(
\sum_{\bsnu \in \cF} b_\bsnu  ||f_\bsnu||_\cS^2
\right)^{1/2} < \infty,
\eeq 
which is independent of $N$. 
%by taking $\eta = q/4$, the smallest value for $\eta$, we have the asymptotic bound \cite[Remark 5.1]{bachmayr2017sparse}
%\beq
%d_{N+1} \lesssim (N+1)^{-s}, \quad s = \frac{1-2\eta}{q} = \frac{1}{q} - \frac{1}{2}\;.
%\eeq
%for a constant $C$ which is independent of $N$ \cite{devore1998nonlinear}. 
To conclude the proof, we need to show that the index set $\Lambda_N$ can be taken such that it is admissible, for which we only need to verify that for any $k \in \bbN$ and $\bsnu \in \cF$, we have 
\beq\label{eq:bnuekgeqbnu}
b_{\bsnu+\bse_k} \geq b_{\bsnu}\;.
\eeq
This is true by the definition of $b_{\bsnu}$ in \eqref{eq:bnu}, i.e., for Kronecker delta $\delta_{jk}$,
\beq
b_{\bsnu+\bse_k} 
= \sum_{|\bsmu|_\infty\leq r}  \prod_{j \geq 1}
\left(
\begin{array}{cc}
\nu_j +\delta_{jk} \\
\mu_j
\end{array}
\right) \tau_j^{2\mu_j}
\geq
\sum_{|\bsmu|_\infty\leq r}  \prod_{j \geq 1}
\left(
\begin{array}{cc}
\nu_j \\
\mu_j
\end{array}
\right) \tau_j^{2\mu_j} = b_\bsnu\;.
\eeq
\end{proof}

\begin{remark}
The convergence of the quadrature error with respect to the number of indices does not depend on the number of the parameter dimensions, thus breaking the curse of dimensionality. It only depends on the summability parameter $q$, which measures the sparsity of the parametric function with respect to the parameters: the smaller $q$ is, the sparser $f$ is, the faster the convergence becomes. 
\end{remark}

\begin{remark}\label{rmk:ConvRate}
For any parametric function satisfying the Assumption \ref{ass:DeriBound}, our theorem implies that we can construct the admissible index set completely based on the definition of $b_\bsnu$ in \eqref{eq:bnu} in order to achieve the convergence rate $N^{-s}$ with $s = 1/q - 1/2$. This convergence rate is obtained as an upper bound, which is not necessarily optimal. 
In fact, our numerical tests indicate that it could be improved.
%to $s = 1/q+1/2$ for the specific testing examples.  
\end{remark}

The convergence rate is obtained with respect to the number of indices $N$ in the index set $\Lambda_N$, which is not necessarily the same as the number of quadrature points. The following corollary provides a convergence rate with respect to the number of quadrature points in the case of Gauss--Hermite quadrature with $m_l = l + 1$.

\begin{corollary}\label{cor:index2point}
As a result of Theorem \ref{thm:N-termConv}, for the case of Gauss--Hermite quadrature with $m_l = l + 1$, the sparse quadrature error is bounded by 
\beq
||I(f)- \cQ_{\Lambda_N}(f)||_{\cS} \leq C N_p^{-s/2}, \quad s = \frac{1}{q} - \frac{1}{2}\;,
\eeq
where $C$ is independent of the number of quadrature points $N_p$ corresponding to $\Lambda_N$.
\end{corollary}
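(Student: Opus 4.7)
\textbf{Proof plan for Corollary \ref{cor:index2point}.}
The plan is to reduce the statement to Theorem \ref{thm:N-termConv} by establishing the inequality $N_p \le N^2$ between the number of quadrature points $N_p = |G_{\Lambda_N}|$ and the number of indices $N = |\Lambda_N|$, and then to read off the convergence rate. Since $(N+1)^{-s} \le N^{-s}$ and $N \ge \sqrt{N_p}$ would give $N^{-s} \le N_p^{-s/2}$, the whole corollary comes down to proving this quadratic comparison.

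The key step is to count the points in $G_{\Lambda_N}$ using admissibility. First I observe that
\[
N_p \;=\; \Bigl| \bigcup_{\bsnu \in \Lambda_N} G_\bsnu \Bigr| \;\le\; \sum_{\bsnu \in \Lambda_N} |G_\bsnu| \;=\; \sum_{\bsnu \in \Lambda_N} \prod_{j \in \bbJ_\bsnu} m_{\nu_j} \;=\; \sum_{\bsnu \in \Lambda_N} \prod_{j \in \bbJ_\bsnu} (1+\nu_j),
\]
using $m_l = l+1$ in the last equality. Next, for each fixed $\bsnu \in \Lambda_N$, the rectangle $\cR_\bsnu = \{\bsmu \in \cF : \bsmu \preceq \bsnu\}$ has cardinality exactly $\prod_{j\in\bbJ_\bsnu}(1+\nu_j)$, and admissibility of $\Lambda_N$ forces $\cR_\bsnu \subseteq \Lambda_N$. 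Hence $\prod_{j\in\bbJ_\bsnu}(1+\nu_j) \le |\Lambda_N| = N$ for every $\bsnu \in \Lambda_N$. Plugging this back gives $N_p \le N \cdot N = N^2$.

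Finally, I combine this with Theorem \ref{thm:N-termConv}: there exists an admissible $\Lambda_N$ with
\[
\|I(f) - \cQ_{\Lambda_N}(f)\|_\cS \;\le\; C (N+1)^{-s} \;\le\; C N^{-s} \;\le\; C \bigl(N_p^{1/2}\bigr)^{-s} \;=\; C N_p^{-s/2},
\]
which is the claimed bound, with the same constant $C$ from Theorem \ref{thm:N-termConv}, independent of $N_p$. I do not expect a real obstacle here; the one thing worth being careful about is that the admissibility-based counting of $\cR_\bsnu$ inside $\Lambda_N$ applies uniformly to every $\bsnu$ (including $\bsnu = \bszero$, where the empty product is $1$), and that the non-nested nature of Gauss--Hermite points does not inflate $N_p$ beyond $\sum_\bsnu |G_\bsnu|$, since all the auxiliary grids $G_{\bsnu-\bse_j}$ appearing in the difference operators $\triangle_\bsnu$ are themselves indexed by elements of $\Lambda_N$ and hence already counted in the union.
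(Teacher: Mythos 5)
Your argument is correct, and it takes a different route from the paper: where you prove the quadratic comparison $N_p \le N^2$ from scratch, the paper simply cites \cite[Proposition 18]{ernst2016convergence} for the bound $N_p \le C N^2$ and concludes. Your counting argument is sound: $|G_\bsnu| = \prod_{j\in\bbJ_\bsnu} m_{\nu_j} = \prod_{j\in\bbJ_\bsnu}(1+\nu_j)$ for $m_l = l+1$, this product equals $|\cR_\bsnu|$, and downward closedness of $\Lambda_N$ gives $\cR_\bsnu \subseteq \Lambda_N$, hence each summand in the union bound is at most $N$ and $N_p \le N^2$; combining with Theorem \ref{thm:N-termConv} and $s>0$ yields the claim. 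Your closing remarks are also the right ones to make: the empty product handles $\bsnu = \bszero$, and the non-nestedness of Gauss--Hermite points does not enlarge the grid beyond $\bigcup_{\bsmu\in\Lambda_N} G_\bsmu$, since every level combination appearing in $\triangle_\bsnu$ corresponds to some $\bsmu \preceq \bsnu$, which lies in $\Lambda_N$ by admissibility. What your approach buys is a self-contained proof with the explicit constant (in fact $N_p \le N^2$ with constant one, so the corollary holds with exactly the constant $C$ of Theorem \ref{thm:N-termConv}), at the cost of a short counting argument that the paper delegates to the literature; the cited proposition covers essentially the same combinatorial fact, so there is no difference in generality, only in self-containedness.
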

\begin{proof}
The bound is a result of \cite[Proposition 18]{ernst2016convergence}, which states that there exists a constant $C$ such that $N_p \leq  C N^2$. 
\end{proof}

\begin{remark}
Similar convergence rates are observed in practice for both GH1, GH2, and GK with respect to the number of quadrature points as that of indices, as shown in our numerical tests. The reason might be that $\cQ_l$, which uses $m_l$ quadrature points, is exact at least for $P_{n}$ with $n \geq m_l-1$ (in fact it is exact for $P_{2m_l-1}$ by GH quadrature), which is much richer than $P_{l}$.
\end{remark}

%\begin{remark}
%The convergence rate is obtained with respect to the number of indices $N$ in the index set $\Lambda_N$, which is not necessarily the same as the number of quadrature points in $\cQ_{\Lambda_N}$.  In the case of GH with $m_l = l + 1$, it can be shown that the number of quadrature points grows as $O(N^2)$, see \cite[Proposition 18]{ernst2016convergence}, so that the convergence with respect to the number of quadrature points is deteriorated to $(N+1)^{-s/2}$ with $s = 1/q-1/2$. 
%However, similar convergence rate is observed in practice for both GH1, GH2, and GK with respect to the number of quadrature points as that of indices, as shown in our numerical tests. The reason might be that $\cQ_l$, which uses $m_l$ quadrature points, is exact at least for $P_{n}$ with $n \geq m_l-1$ (in fact it is exact for $P_{2m_l-1}$ by GH quadrature), which is much richer than $P_{l}$.
%\end{remark}

\subsection{Examples}
\label{subsec:Examples}
The dimension-independent convergence rate relies on the assumption on the derivatives of the function $f(\bsy)$ with respect to the parameter $\bsy$ as stated in Assumption \ref{ass:DeriBound}. Here we provide two examples which satisfy such assumption. For both examples, we assume a common structure that the function $f$ depends on $\bsy$ through $\kappa(\bsy)$ as $f(\kappa(\bsy))$, where $\kappa$ is given by
\beq\label{eq:kappa}
\kappa(\bsy) = \sum_{j \geq 1} y_j \psi_j\;,
\eeq
where we assume $\max_{j\geq 1}||\psi_j|| < \infty$, e.g., $||\psi_j|| = |\psi_j|$ if $\psi \in \bbR$ and $||\psi_j||  = ||\psi_j||_{L^\infty(D)}$ if $\psi_j$ is a function in a physical domain $D$. 
\subsubsection{Example 1 -- A nonlinear parametric function}
\label{sec:ex1}
We first consider a function that does not depend on the physical coordinate $x$, where we set $\psi_j = j^{-\alpha}$ in $\kappa$,  in particular, 
\beq\label{eq:fbsy}
f(\bsy) = f(\kappa(\bsy)) = \exp\left(\sum_{j \geq 1} y_j j^{-\alpha} \right), \quad \alpha > 1\;.
\eeq 
To satisfy Assumption \ref{ass:DeriBound}, we compute 
\beq
\sum_{|\bsmu|_\infty \leq r} \frac{\bstau^{2\bsmu}}{\bsmu!} \int_{Y} |\partial^\bsmu_\bsy f(\bsy)|^2 d\bsgamma(\bsy) = \int_Y f^2(\bsy) d\bsgamma(\bsy) \sum_{|\bsmu|_\infty \leq r} \frac{\bstau^{2\bsmu}}{\bsmu!} \prod_{j \geq 1}j^{-2\alpha \mu_j} \;,
\eeq
where, for $\alpha > 1$, we have 
\beq
\int_Y f^2(\bsy) d\bsgamma(\bsy) = \prod_{j\geq 1} \int_{\bbR} e^{2j^{-\alpha}y_j} \rho(y_j)dy_j = \exp\left(2\sum_{j\geq 1} j^{-2\alpha}\right) < \infty\;.
\eeq
Moreover, we have the bound (by using $1+x+\cdots + x^r/r!  < e^x$ for any $x > 0$)
\beq
\sum_{|\bsmu|_\infty \leq r} \frac{\bstau^{2\bsmu}}{\bsmu!} \prod_{j \geq 1}j^{-2\alpha \mu_j}  = \prod_{j\geq 1} \left(\sum_{l=0}^r \frac{(\tau_j^{2} j^{-2\alpha})^l}{l!}\right) \leq \exp\left(\sum_{j\geq 1} \tau_j^{2} j^{-2\alpha}\right)\;,
\eeq
which is finite if and only if $\tau_j \lesssim j^{\alpha - 1/2 -\varepsilon}$ for arbitrary $\varepsilon > 0$, so that $(\tau_j^{-1})_{j \geq 1} \in \ell^q(\bbN)$ for $q \geq 1/(\alpha - 1/2-\varepsilon)$. By Theorem \ref{thm:N-termConv}, we obtain the convergence rate $N^{-s}$ for $s = 1/q - 1/2 \leq \alpha - 1 -\varepsilon$.
Note that the case $\alpha \leq 1$ is not covered by the theorem.

\subsubsection{Example 2 -- PDE solution as a nonlinear map}
\label{sec:ex3}
We consider the solution (nonlinear with respect to $\kappa$) of the diffusion equation: find $u(\bsy)\in H^1_0(D)$ such that 
\beq
- \text{div}(e^{\kappa(\bsy)} \nabla u(\bsy)) = g,  \text{ in } D\;,
\eeq
with homogeneous Dirichlet boundary condition, and $g \in H^{-1}(D)$. 
This example is studied in detail in \cite{bachmayr2017sparse}. Under the parametrization \eqref{eq:kappa}, for $(\tau_j)_{j\geq 1}$ such that
\beq\label{eq:taupsi}
\sup_{x\in D} \sum_{j \geq 1} \tau_j |\psi_j(x)| < \frac{\text{ln} 2}{\sqrt{r}}\;,
\eeq
and $(\tau_j^{-1})_{j\geq 1} \in \ell^q(\bbN)$ for any $0 < q < \infty$, they proved the bound \cite[Theorem 4.2]{bachmayr2017sparse}
\beq\label{eq:DiffusionBound}
\sum_{|\bsmu|_\infty \leq r} \frac{\bstau^{2\bsmu}}{\bsmu!} \int_{Y} ||\partial^\bsmu_\bsy u(\bsy)||^2_\cS d\bsgamma(\bsy) \leq C\int_{Y} \exp(4||\kappa(\bsy)||_{L^{\infty}(D)}) d\bsgamma(\bsy) < \infty\;,
\eeq
where $\cS =H^1_0(D)$, $C$ is a constant independent of $\bsy$.  The first inequality is ensured by \eqref{eq:taupsi} from a careful estimate of the partial derivatives of $u$ with respect to $\bsy$ and the sum of their integrals, while the second inequality is ensured by $(\tau_j^{-1})_{j\geq 1} \in \ell^q(\bbN)$. Then the convergence rate $N^{-s}$ with $s = 1/q - 1/2$ in Theorem \ref{thm:N-termConv} is established for $f(\bsy) = u(\bsy)$. Note that in \cite{bachmayr2017sparse} only $r > 2/q$ is needed for the convergence result of a Hermite polynomial approximation error, while we need $r > 14/q$ for the convergence of the sparse quadrature error due to the proof in Lemma \ref{lemm:cnubnu}.

Here, the solution $u(\bsy)$ can be replaced by a bounded linear functional of $f(\bsy) = f(u(\bsy))$, and the inequality \eqref{eq:DiffusionBound} can be verified for $f$ due to 
\beq\label{eq:linearf}
|\partial^\bsmu_\bsy f(\bsy)|^2 \leq ||f||^2_{\cS'} ||\partial^\bsmu_\bsy u(\bsy)||^2_\cS.
\eeq 

\section{Construction of the sparse quadrature}
\label{sec:construction}

We present two algorithms for the construction of the sparse quadrature -- one is a-priori construction that guarantees the dimension-independent convergence rate in Theorem \ref{thm:N-termConv}; the other is a goal-oriented a-posteriori construction based on a-posteriori error indicator -- the difference quadrature $\triangle_\bsnu(f)$ in \eqref{eq:TensorDiff} that depends on each specific function $f$, which however can not guarantee the dimension-independent convergence rate in theory but achieve so in our numerical experiments in Sec \ref{sec:numerics}. 
 
\subsection{A-priori construction}
\label{sec:aprioricons}
A-priori construction of sparse grids has been considered in the literature, e.g., in \cite{ma2009adaptive, beck2014quasi}. In our setting,
from Theorem \ref{thm:N-termConv} we observe that the dimension-independent convergence rate of the sparse quadrature can be achieved by choosing the admissible index set $\Lambda_N$ with indices $\bsnu \in \cF$ corresponding to the largest value of $b_{\bsnu}$. While we can compute $b_\bsnu$ for all the indices $\bsnu \in \cF_{r,J}$ where 
\beq
 \cF_{r,J}= \{\bsnu \in \cF: |\bsnu|_{\infty} \leq r, \text{ and } \nu_j = 0 \text{ for } j > J \}\;,
\eeq
it is expensive/unfeasible if $r$ and $J$ are very large or infinite. For a feasible construction, we first arrange $(\tau_j)_{j\geq 1}$ to be in increasing order. Then, thanks to the monotonic increasing property of $b_\bsnu$ in \eqref{eq:bnuekgeqbnu}, we can adaptively construct the admissible index set $\Lambda_N$ by Algorithm \ref{alg:SparseQuad} (with candidate indices from a forward neighbor index set, see \eqref{eq:ForwNeib} ahead). 
Note that even for indices that cannot be sorted
in lexicographic order, e.g., $\bsnu = (2, 1)$ and $\bsmu = (1, 2)$,  $b_\bsnu > b_\bsmu$ due to the reordering just introduced. This implies that the a-priori construction, that iteratively explores variables one after the other (see again \eqref{eq:ForwNeib}
ahead), will never miss the largest index still not included in the set, which guarantees that the convergence rate predicted by theory will be attained. We explain this algorithm in detail in the next section.

We remark that this a-priori construction depends only on the parameters $q$, $\bstau$ and $r$ in Assumption \ref{ass:DeriBound} for any function satisfying such assumption.  However, it is not always straightforward or possible to verify this assumption especially for nonlinear function with respect to the parameter as in Example 2. In this situation, and in the common parametrization as in \eqref{eq:kappa}, we use $\tau_j = j^{\alpha - 1}$ when $||\psi_j||$ decays as $j^{-\alpha}$ as demonstrated in Section 5.2 (see Fig. \ref{fig:CompareTau}), and choose $r = \text{floor}(14(\alpha-1))+1$, the closest integer larger than $14(\alpha-1)$ according to Assumption \ref{ass:DeriBound}. Alternatively, we turn to a goal-oriented a-posteriori construction that does not need $q$, $\bstau$ and $r$. 

\subsection{Goal-oriented a-posteriori construction}
\label{sec:aposteriori}
We present a goal-oriented a-posteriori construction of the sparse quadrature based on a dimension-adaptive tensor-product quadrature initially developed in \cite{gerstner2003dimension} 
%by taking advantage of the different importance of different dimensions, or different regularity of $f$ with respect to different $y_j$, $j \in \bbJ$, 
which we 
call \emph{adaptive sparse quadrature}, whose associated grids $G_{\Lambda}$ is called \emph{adaptive sparse grids}. The basic idea is based on the following adaptive process: given an admissible index set $\Lambda$, we search an index $\bsnu \in \cF$ among the forward neighbors of $\Lambda$ ($\bsnu \in \cF $ is called a forward neighbor of $\Lambda$ if $\Lambda \cup \bsnu$ is still admissible), at which $||\triangle_\bsnu||_\cS$ is maximized, and add this index to the index set $\Lambda = \Lambda \cup \{\bsnu\}$. As the number of forward neighbors depends on the dimension $J$ (in fact, the forward neighbors of $\bsnull$ are $\bse_j$ for all $j$), it is not feasible to search over all the forward neighbors in high or infinite dimensions. In such cases, it is usually reasonable to assume that the dimensions with small indices are more important than those with large indcies, as determined, e.g., by the decaying eigenvalues in Karhunen--Lo\`eve representation of a random field. Therefore, we can explore the forward neighbors dimension by dimension in the set (see, e.g., \cite{Schillings2013, chkifa2014high})
\beq\label{eq:ForwNeib}
\cN(\Lambda) := \{\bsnu \not \in \Lambda: \bsnu - \bse_j \in \Lambda, \forall j \in \bbJ_\bsnu \text{ and } \nu_j = 0\;, \forall j > j(\Lambda)+1\},
\eeq
where $\bbJ_\bsnu = \{j: \nu_j \neq 0\}$; $j(\Lambda)$ is the smallest $j$ such that $\nu_{j+1} = 0$ for all $\bsnu \in \Lambda$. More generally, $j(\Lambda) + K$ for a certain $K \geq 1$ can be used, see \cite{nobile2016adaptive}.

The adaptive sparse quadrature can be constructed following a basic greedy algorithm proposed in \cite{gerstner2003dimension}, which was improved on the data structure in \cite{klimke2006uncertainty} to cope with very high dimensions (e.g., upto $10^4$ dimensions in a personal laptop with $16$GB memory). We present the goal-oriented a-posteriori construction also in Algorithm \ref{alg:SparseQuad}. 

\begin{algorithm}
\caption{Adaptive sparse quadrature}
\label{alg:SparseQuad}
\begin{algorithmic}[1]
\STATE{\textbf{Input: } 
%tolerance $\epsilon$, 
maximum number of indices $N_{\text{max}}$, function $f$.}
\STATE{\textbf{Output: } the admissible index set $\Lambda_N$, quadrature $\cQ_{\Lambda_N}(f)$.}
\STATE{Set $N= 1$, $\Lambda_N = \{\bsnull\}$, evaluate $f(\bsnull)$ and set $
\cQ_{\Lambda_N}(f)=f(\bsnull)$.}
%\IF {a-priori construction}
%\STATE{Compute $b_{\bsnu}$ for all $\bsnu \in \cN(\Lambda_N)$ by \eqref{eq:bnu}. \\
% Set $\cE_N = \min_{\bsmu \in \cN(\Lambda_N)} b_{\bsnu}^{-(2-q)/4q}$.}
%\ELSE 
%\STATE{Compute $\triangle_\bsnu(f)$ for all $\bsnu \in \cN(\Lambda_N)$ by \eqref{eq:TensorQuad}.\\
%Set $
%\cE_N = \max_{\bsmu \in \cN(\Lambda_N)}||\triangle_{\bsmu}(f)||_\cS.
%$
%}
%\ENDIF
\WHILE{$N < N_{\text{max}}$}
%and $\mathcal{E}_N > \epsilon$
\STATE{Construct the forward neighbor set $\cN(\Lambda_N)$ by \eqref{eq:ForwNeib}.}
\IF {a-priori construction}
\STATE{Compute $b_{\bsnu}$ for all $\bsnu \in \cN(\Lambda_N)$ by \eqref{eq:bnu}.}
\STATE{Take $\bsnu = \argmin_{\bsmu \in \cN(\Lambda_N)} b_\bsnu$.}
\ELSE 
\STATE{Compute $\triangle_\bsnu(f)$ for all $\bsnu \in \cN(\Lambda_N)$ by \eqref{eq:TensorQuad}.}
\STATE{Take $\bsnu = \argmax_{\bsmu \in \cN(\Lambda_N)} ||\triangle_{\bsmu}(f)||_\cS$.}
\ENDIF
\STATE{Enrich the index set $\Lambda_{N+1} = \Lambda_N \cup \{\bsnu\}$}.
\STATE{Set $\cQ_{\Lambda_{N+1}}(f) = \cQ_{\Lambda_N}(f) + \triangle_{\bsnu}(f)$.}
%\STATE{Construct the forward neighbor set $\cN(\Lambda_{N+1})$ by \eqref{eq:ForwNeib}}.
%%\STATE{Compute $\triangle_\bsnu(f)$ for all $\bsnu \in \cN(\Lambda_{N+1})$ by \eqref{eq:TensorQuad}.}
\STATE{Set $ N \leftarrow N + 1$.}
%\STATE{Set $ N \leftarrow N + 1$}.
\ENDWHILE
\end{algorithmic}
\end{algorithm}

\begin{remark}
Instead of using the maximum number of indices as the stopping criterion, 
we can use some others, such as the maximum number of points, or an heuristic error indicator $||\sum_{\bsmu \in \cN(\Lambda_N)}\triangle_{\bsmu}(f)||_\cS$, or $b_{\bsnu}^{-(2-q)/4q}$ for the a-priori construction. Moreover, for the a-posteriori construction, it is also a common practice to chose $\bsnu$ as $\bsnu = \argmax_{\bsmu \in \cN(\Lambda_N)} ||\triangle_{\bsmu}(f)||_\cS /|G_\bsmu|$ to balance the error and the work, e.g., \cite{gerstner2003dimension, nobile2016adaptive}. We caution that these heuristic error indicators are not rigorous and may lead to early stop of the algorithm in the case that $||\triangle_{\bsmu}(f)||_\cS$ is critically small for all $\bsmu$ in $\cN(\Lambda_N)$, which can be possibly addressed by a verification process \cite{chen2015new}. 
\end{remark}

\begin{remark}
Note that to construct $\Lambda_N$, we need to evaluate the function $f$ at all quadrature points corresponding to $\cN(\Lambda_N)$ by the a-posteriori construction, so that the total number of function evaluations is larger than that in $\Lambda_N$ as presented in Corollary \ref{cor:index2point}. We will also investigate the convergence rate with respect to the total number of quadrature points in the numerical experiments.
\end{remark}

\section{Numerical experiments}
\label{sec:numerics}
In this section, we present two numerical experiments for a parametric function and a parametric PDE to demonstrate the convergence property of the sparse quadrature using different univariate quadrature rules and different construction schemes in comparison with the Monte Carlo quadrature.

\label{sec:Numerics}
\subsection{A parametric function}
\label{subsec:function}
We first consider the nonlinear parametric function presented in Example 1, Sec \ref{sec:ex1}. The expectation of the function is given analytically, which is 
\beq
I(f) = \exp\left( \frac{1}{2} \zeta(2\alpha) \right)\;,
\eeq 
where $ \zeta(2\alpha) = \sum_{j\geq 1} j^{-2\alpha}$ is the Riemann zeta function. We compute it by truncation of $j$ at $10^4$ dimensions and use it as the reference value.
We run Algorithm \ref{alg:SparseQuad} for the construction of the sparse quadrature with both the a-priori construction in Sec. \ref{sec:aprioricons}, and the goal-oriented a-posteriori construction in Sec. \ref{sec:aposteriori}. For the former, we use $\tau_j = j^{\alpha - 1/2}$, as obtained in Example 1, for the computation of $b_\bsnu$ in \eqref{eq:bnu}. 
We set 
%the tolerance at $\epsilon = 10^{-12}$ and 
the maximum number of sparse grid points at $10^5$. The forward neighbor index set \eqref{eq:ForwNeib} is used since $\tau_j$ is monotonically increasing. We test the four quadrature rules: 1) Gauss--Hermite rule with $m_l = l+1$ (GH1 for short); 2) Gauss--Hermite rule with $m_l = 2^{l+1}-1$ (GH2); 3) transformed Gauss--Kronrod--Patterson rule (tGKP) with maximum level $l = 6$; 4) Genz--Keister rule (GK) with maximum level $l = 4$.

\begin{figure}[!htb]
\begin{center}
\includegraphics[scale=0.34]{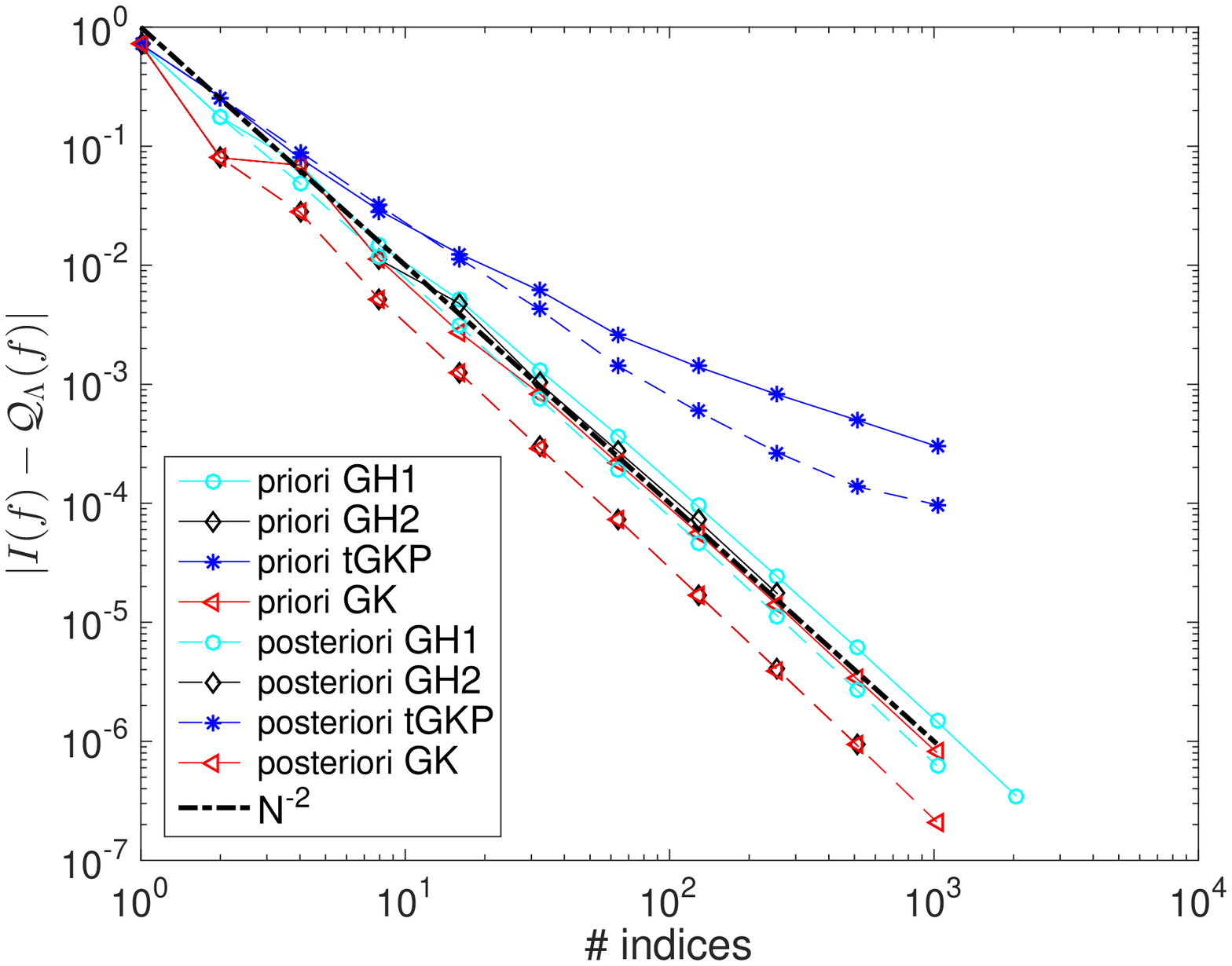}
\hspace*{0.2cm}
\includegraphics[scale=0.34]{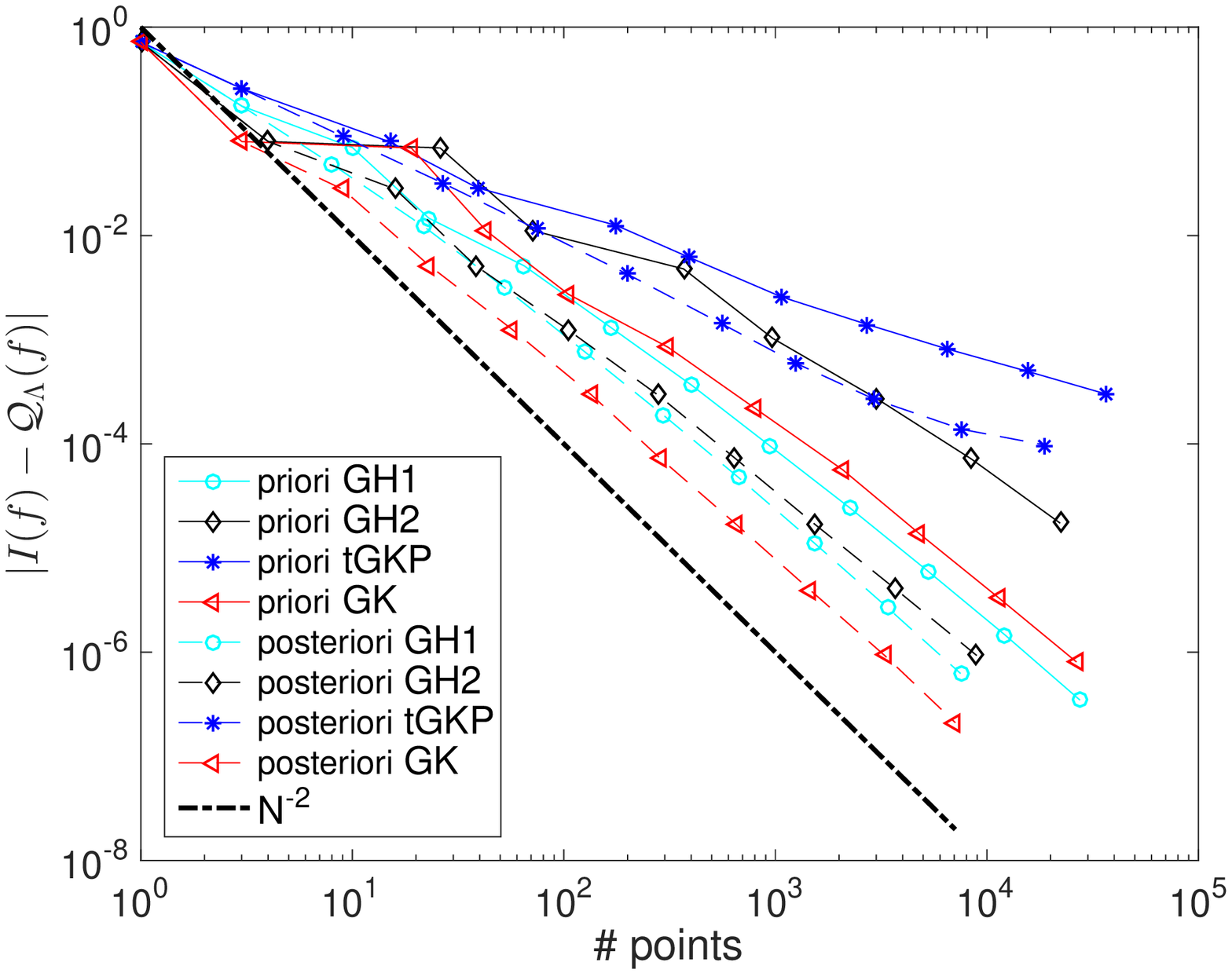}

%\vspace*{0.2cm}
%
%\includegraphics[scale=0.34]{}
%\hspace*{0.2cm}
%\includegraphics[scale=0.34]{}
\end{center}
\caption{Decay of quadrature errors $|I(f) - \cQ_{\Lambda}(f)|$ with respect to the number of indices (left) and the number of points (right) in $\Lambda$. Reported are for the different quadrature rules constructed by both the a-priori and the a-posteriori schemes with Algorithm \ref{alg:SparseQuad}. Here, $\alpha = 2$.}\label{fig:OldPrioriPosteriori}
\end{figure}

Figure \ref{fig:OldPrioriPosteriori} displays the decay of the quadrature errors with respect to the number of indices $|\Lambda|$ and the number of sparse grid points (function evaluations) $|G_\Lambda|$ in $\Lambda$. We can observe a dimension-independent convergence rate of the quadrature error, not only with respect to the number of indices as predicted by Theorem \ref{thm:N-termConv}, but also with respect to the number of points. 
Note that the convergence rate obtained is indeed dimension-independent, since only part of the dimensions at
disposal have been activated as observed in Fig. \ref{fig:level}: in other words, had we considered even more than the
current $10^4$ random variables, possibly countably many, we would have observed the same convergence
curve.
%Note that the convergence rate is dimension-independent since only part of the dimensions have been activated as observed in Fig. \ref{fig:level}. 
It is evident from the comparison that both the a-priori and the a-posteriori construction schemes lead to very close convergence rates for the quadrature rules GH1, GH2 and GK, while the a-posteriori construction gives smaller quadrature errors at the same number of indices/points for all four quadrature rules. 

The numerical convergence rate with respect to the number of indices is about $N^{-s}$ for GH1, GH2, and GK, with $s = 2$ for $\alpha = 2$, which is faster than that predicted by Theorem \ref{thm:N-termConv} at $s = \alpha -1$. This indicates that the convergence rate obtained in Theorem \ref{thm:N-termConv} is possibly not optimal. 
%and the optimal rate could be $s = 1/q + 1/2$ instead of $s= 1/q-1/2$. 
Note that the convergence is sightly slower than $N^{-2}$ with respect to the number of points, which is due to the larger number of points than the number of indices. The performance of GH1, GH2, and GK are very close: the errors of GH2 and GK overlap with respect to the number of indices while the latter is smaller than the former with respect to the number of points, because GK points are nested while GH2 (also GH1) points are not. On the other hand, it is shown that tGKP does not converge as fast as the other three rules and gets stagnated for a large number of indices and points. This is due to the fact that the degree of exactness of tGKP is much smaller than the others; in particular, it does not satisfy A.1 of Assumption \ref{ass:Quadrature} as shown in Fig \ref{fig:QuadAccu}.

\begin{figure}[!htb]
\begin{center}
\includegraphics[scale=0.34]{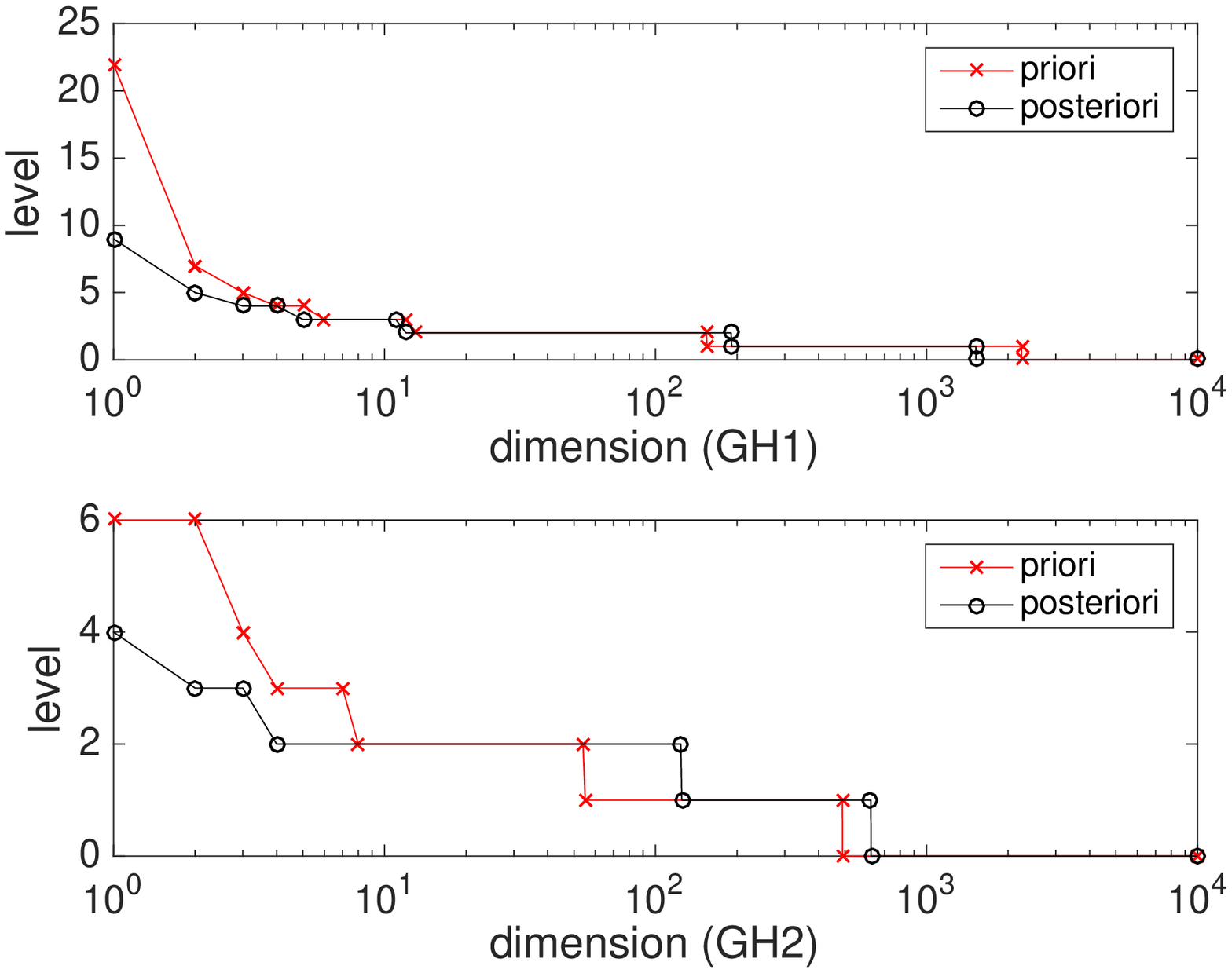}
\hspace*{0.5cm}
\includegraphics[scale=0.34]{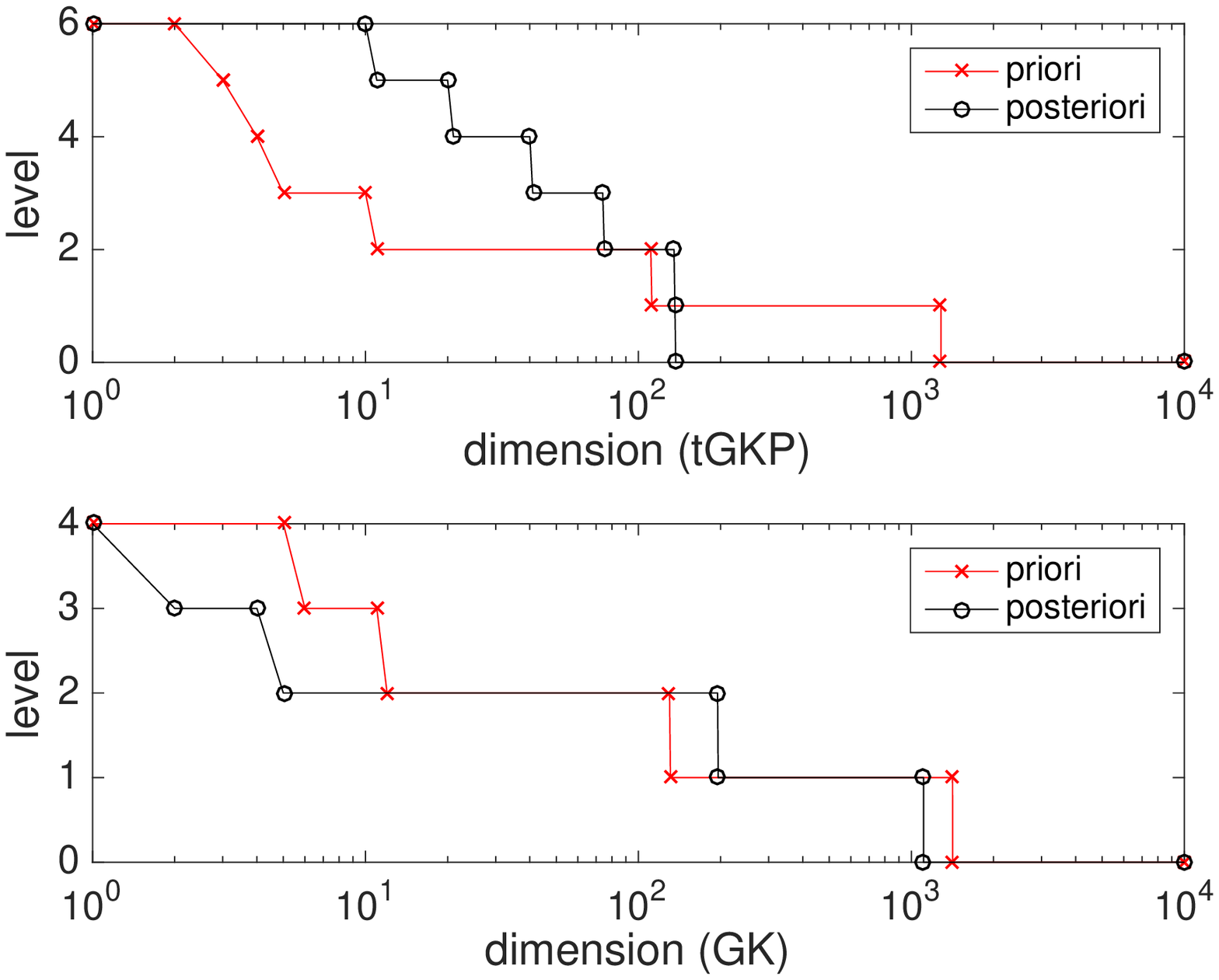}

%\vspace*{0.2cm}
%
%\includegraphics[scale=0.34]{}
%\hspace*{0.5cm}
%\includegraphics[scale=0.34]{}
\end{center}
\caption{Maximum level ($\max_{\bsnu \in \Lambda\cup \cN(\Lambda)} \nu_j$, $j = 1, \dots, 10^4$) in each of the $10^4$ dimensions constructed by the a-priori and the a-posteriori schemes for the four quadrature rules. $\alpha = 2$.}\label{fig:level}
\end{figure}

The sparse grid level $l$ for the two construction schemes with the four quadrature rules is displayed in Fig. \ref{fig:level}. Note that we have set the maximum level for GH2 and tGKP as $6$, and for GK as $4$ due to the availability of the quadrature points (for tGKP and GK). The a-priori construction tends to use higher levels for the first few dimensions than the a-posteriori construction for GH1, GH2, and GK, and gives rise to the larger number of points that become useless because of the high exactness of the GH and GK quadrature rules (see the early divergence of the errors in the right part of Fig. \ref{fig:OldPrioriPosteriori}). This high exactness is explored and benefited by the a-posteriori construction. On the other hand, the low exactness of the tGKP is not seen by the a-priori construction but by the a-posteriori, see the different levels for tGKP in Fig. \ref{fig:level}. Moreover, the a-priori construction leads to less accurate quadrature results compared to the a-posteriori construction, especially for GH2, GK, and tGKP as the number of these quadrature points double from one level to the next. As for GH1, the a-priori construction is very close to the a-posteriori construction in terms of accuracy. This is because only one quadrature point is added from one level to the next, so that the number of indices and the number of quadrature points are closer than those for the other three quadrature rules. Note that the a-priori construction is performed completely based on the quantity $b_\bsnu$ in \eqref{eq:bnu}, which only depends on the index for fixed $(\tau_j)_{j\geq 1}$, regardless of how many quadrature points are used in the same index set.

The convergence rates have been investigated with respect to the number of indices and points in $\Lambda$ to demonstrate the results in Theorem \ref{thm:N-termConv}. However, in order to construct $\Lambda$, the indices in its forward neighbor set $\cN(\Lambda)$ (see the definition \eqref{eq:ForwNeib}) have to be searched over. Hence, we need to evaluate the function at each quadrature point in $\cN(\Lambda)$ by the a-posteriori construction, or evaluate $b_{\bsnu}$ (defined in \eqref{eq:bnu}) by the a-priori construction. Here we emphasize that the computational cost for the evaluation of $b_{\bsnu}$ could be negligible compared to that of the function evaluation which requires, e.g., PDE solve, so that the a-priori construction is potentially more efficient than the a-posteriori. 
%Note that step 5 and 18 in Algorithm \ref{alg:SparseQuad} are not executed for the a-priori construction if we set the maximum number of indices/points as the only stopping criterion. 
For instance, here 30601 function evaluations are performed out of 100500 points (the remaining points are in the forward neighbor set $\cN(\Lambda)$) by GH1 quadrature rule.

To investigate the convergence rate with respect to the total number of indices and points in $\bar{\Lambda} = \Lambda \cup \cN(\Lambda)$, which represents the total computational cost, we compute the quadrature error $|I(f) - \cQ_{\bar{\Lambda}}(f)|$ for the GK rule with $\alpha = 1, 2, 3$. We also compute the Monte Carlo quadrature error by an average of $100$ trials for all $\alpha$ in $10^3$ dimensions. The quadrature errors are reported in Fig. \ref{fig:AllPrioriPosteriori}. 

\begin{figure}[!htb]
\begin{center}
\includegraphics[scale=0.34]{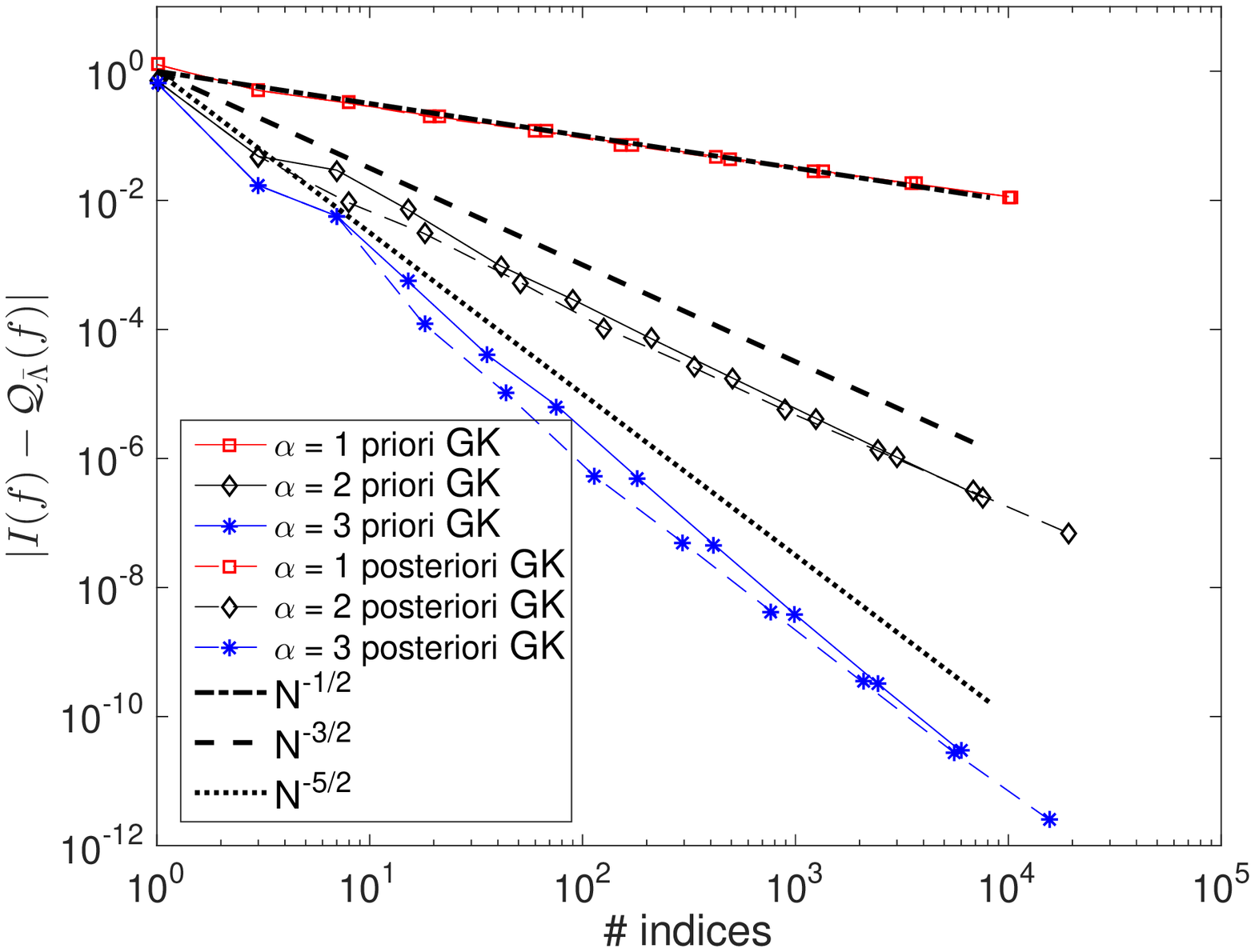}
\hspace*{0.2cm}
\includegraphics[scale=0.34]{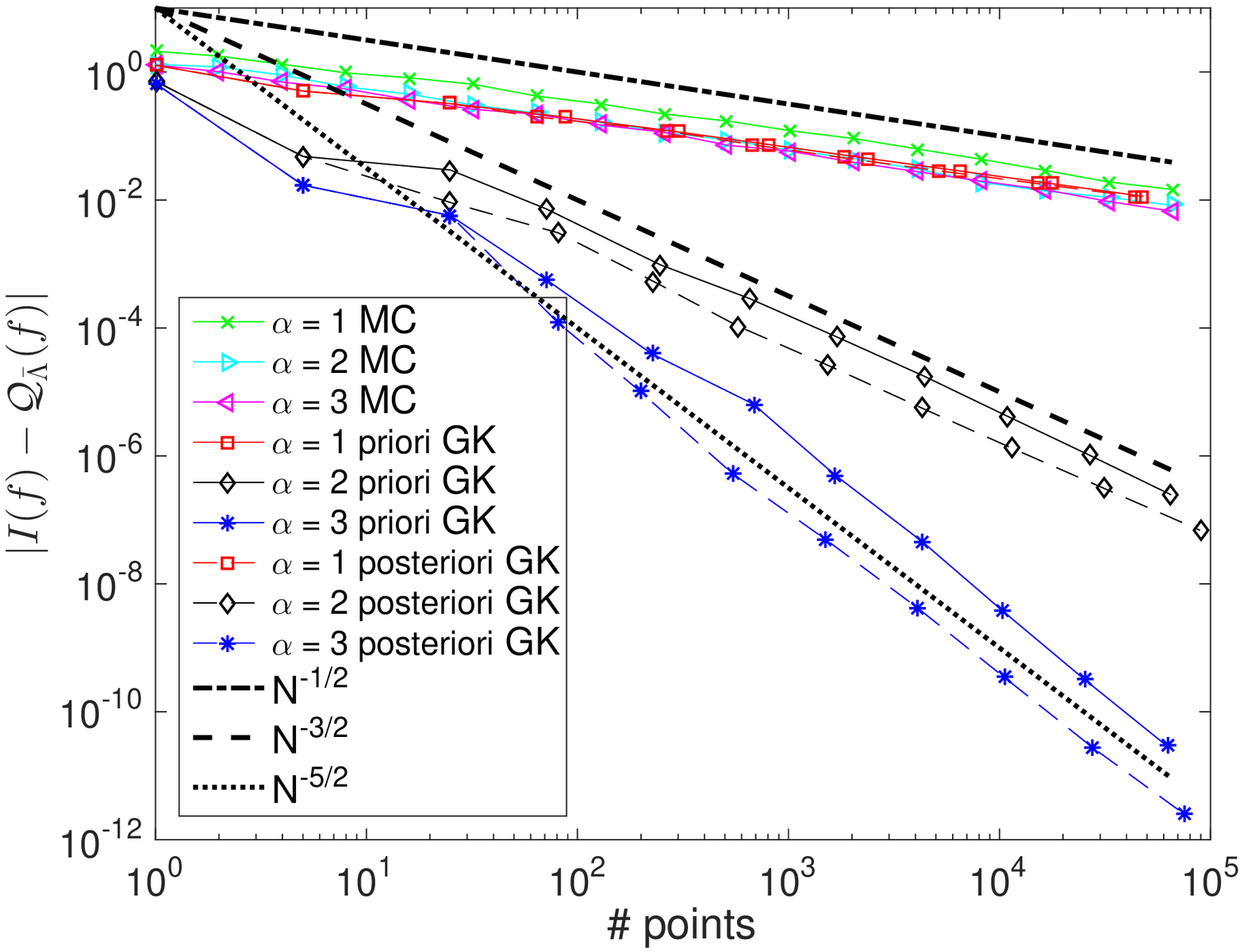}
\end{center}
\caption{Decay of quadrature errors $|I(f) - \cQ_{\bar{\Lambda}}(f)|$ with respect to the number of indices (left) and the number of points (right) in $\bar{\Lambda} = \Lambda \cup \cN(\Lambda)$. Reported are for the Monte Carlo (MC) and the GK quadrature rules constructed by both the a-priori and the a-posteriori construction schemes.}\label{fig:AllPrioriPosteriori}
\end{figure}

We can observe that the convergence rates of the quadrature errors with respect to both the total number of indices and the total number of points corresponding to the union set $\bar{\Lambda}$ are about $N^{-s}$, where $s = \alpha - 1/2$ for all $\alpha = 1, 2, 3$, by both the a-priori and the a-posteriori construction schemes. Meanwhile, the average of Monte Carlo (MC) quadrature errors decays as $N^{-1/2}$ for all $\alpha$, which is much slower than that of the sparse quadrature errors for $\alpha = 2, 3$. In the case $\alpha = 1$, the sparse quadrature still achieves very close convergence rate as $N^{-1/2}$ for MC and with smaller errors in this test example, see in the right part of Fig. \ref{fig:AllPrioriPosteriori}. Note that the MC quadrature error is measured in average/expectation, which could be much less accurate depending on the trial, while the sparse quadrature error is deterministically bounded.

\subsection{A parametric PDE}
\label{subsec:PDE}
In this section, we consider the parametric PDE of Example 2 in Sec. \ref{sec:ex3}, where the coefficient $\kappa$ is a Gaussian random field allowing the Karhunen--Lo\`eve expansion
\beq\label{eq:KLExpansion}
\kappa = \kappa_0 + \sum_{j \geq 1} \sqrt{\lambda_j} \phi_j y_j\;,
\eeq
where $(\lambda_j, \phi_j)_{j\geq 1}$ are the eigenpairs of $(-\delta \triangle)^{-\alpha}$, $\delta, \alpha > 0$, with homogeneous Dirichelet boundary condition on the boundary $\partial D$ of the domain $D \in \bbR^d$, and $(y_j)_{j \geq 1}$ are i.i.d. standard Gaussian random variables. For the simple case $D = (0,1)$, we have for $\delta = 1/\pi^2$,
\beq 
\lambda_j = j^{-2\alpha}, \text{ and } \phi_j = \sin(\pi j x)\;.
\eeq
This monodimensional PDE problem under the above parametrization is well-posed under the condition $\alpha > 1/2$, see \cite[Assumption 3.1]{charrier2012strong}.
In the numerical test, we set $\kappa_0 = 0$, the forcing term $g = 1$, and prescribe zero Dirichlet boundary condition at $x=0, 1$. A uniform mesh with mesh size $h = 1/2^{10}$ is used for the discretization of the domain $D$, therefore we truncate $j$ with $J = 1023$ dimensions in the parametrization \eqref{eq:KLExpansion}. We use a finite element method with piecewise linear element to solve the elliptic PDE.
Under the parametrization \eqref{eq:KLExpansion}, our quantity of interest is the average value of $u$ in $D$ and we compute its first two moments, i.e., we compute $\bbE[f_1]$ and $\bbE[f_2]$, where 
\beq
f_1(\bsy) = Q(u(\bsy)) \text{ and } f_2(\bsy) = Q^2(u(\bsy)), \text{ where } Q(u(\bsy)) = \int_D u(\bsy) dx\;.
\eeq 

We construct the sparse quadrature by both the a-priori and the a-posteriori construction schemes presented in Algorithm \ref{alg:SparseQuad}. For the a-priori construction, to satisfy the condition \eqref{eq:taupsi} with $\psi_j = \sqrt{\lambda_j} \phi_j = j^{-\alpha} \sin(\pi j x)$, a choice of $\tau_j \propto j^{\alpha - 1 - \varepsilon}$ for arbitrary small $\varepsilon > 0$ is sufficient since 
\beq
\sup_{x \in D} \sum_{j \geq 1} \tau_j |\psi_j(x)| \leq \sum_{j\geq 1} \tau_j ||\psi_j||_{L^\infty(D)} =  \sum_{j\geq 1} \tau_j j^{-\alpha}\;.
\eeq 
Here, we set $\tau_j = j^{\alpha - 1}$ with $\alpha = 2$. To run Algorithm \ref{alg:SparseQuad}, we set the maximum number of sparse grid points set as $10^5$.
%and the tolerance as $10^{-12}$. 

\begin{figure}[!htb]
\begin{center}
\includegraphics[scale=0.34]{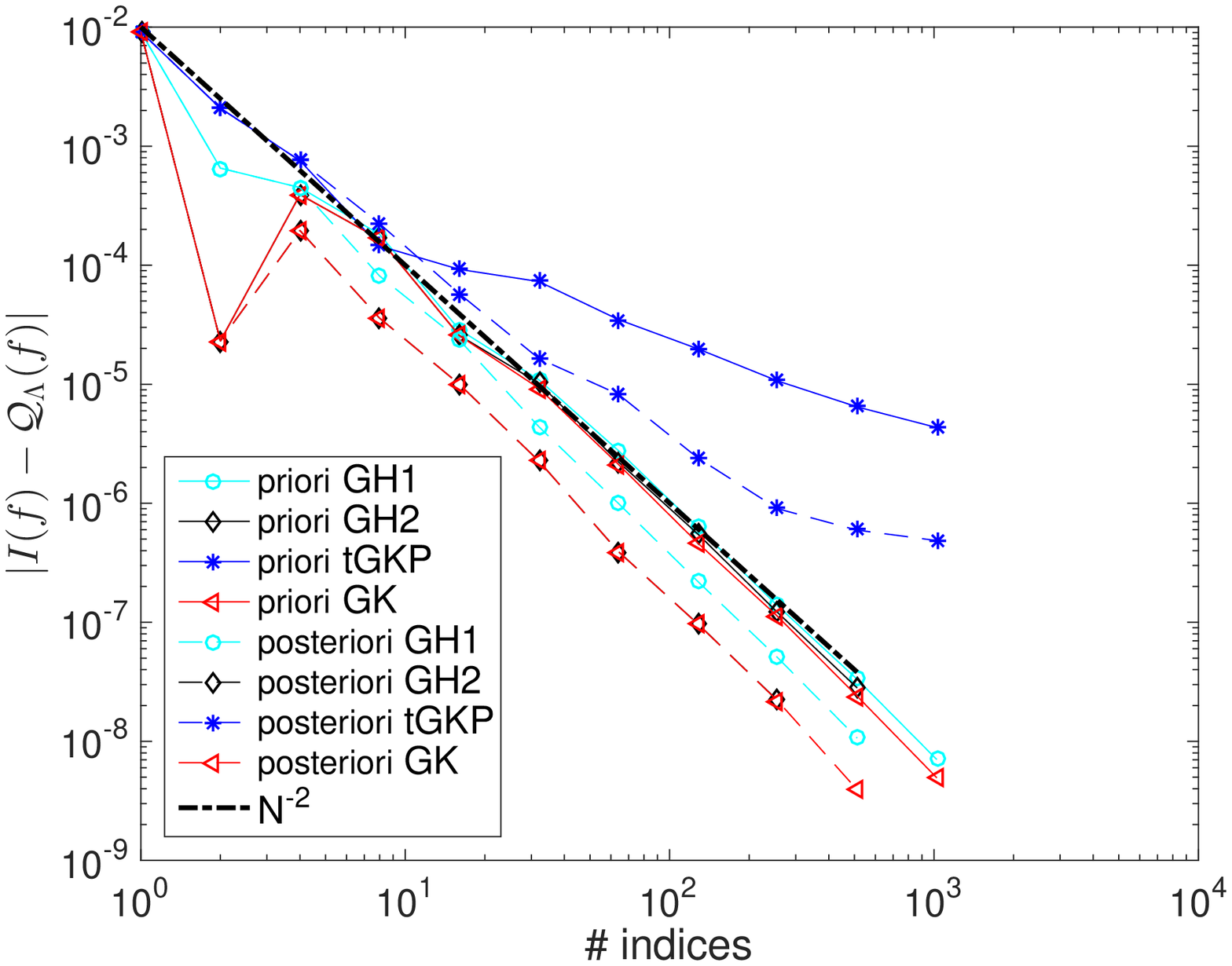}
\hspace*{0.2cm}
\includegraphics[scale=0.34]{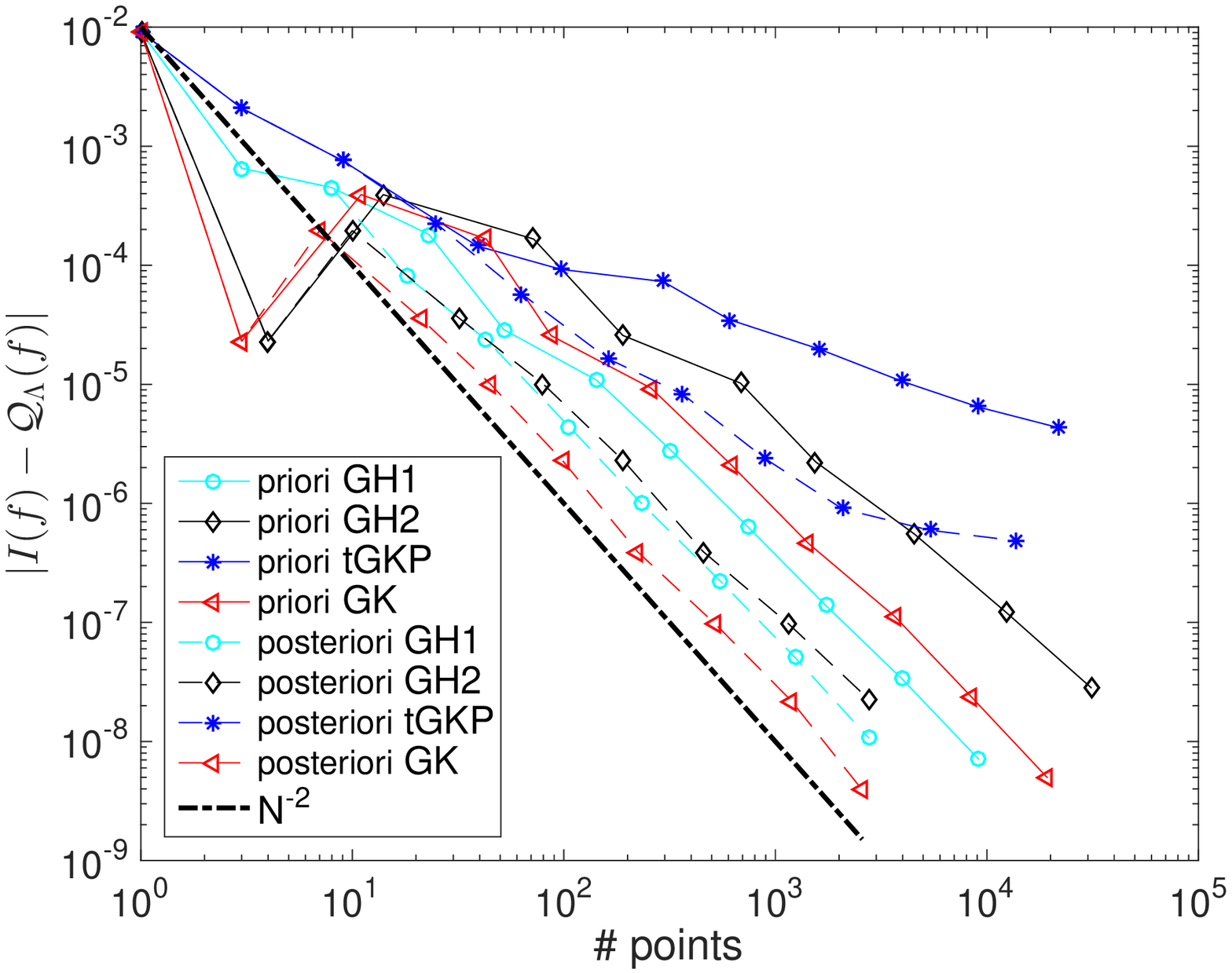}

\vspace*{0.2cm}

\includegraphics[scale=0.34]{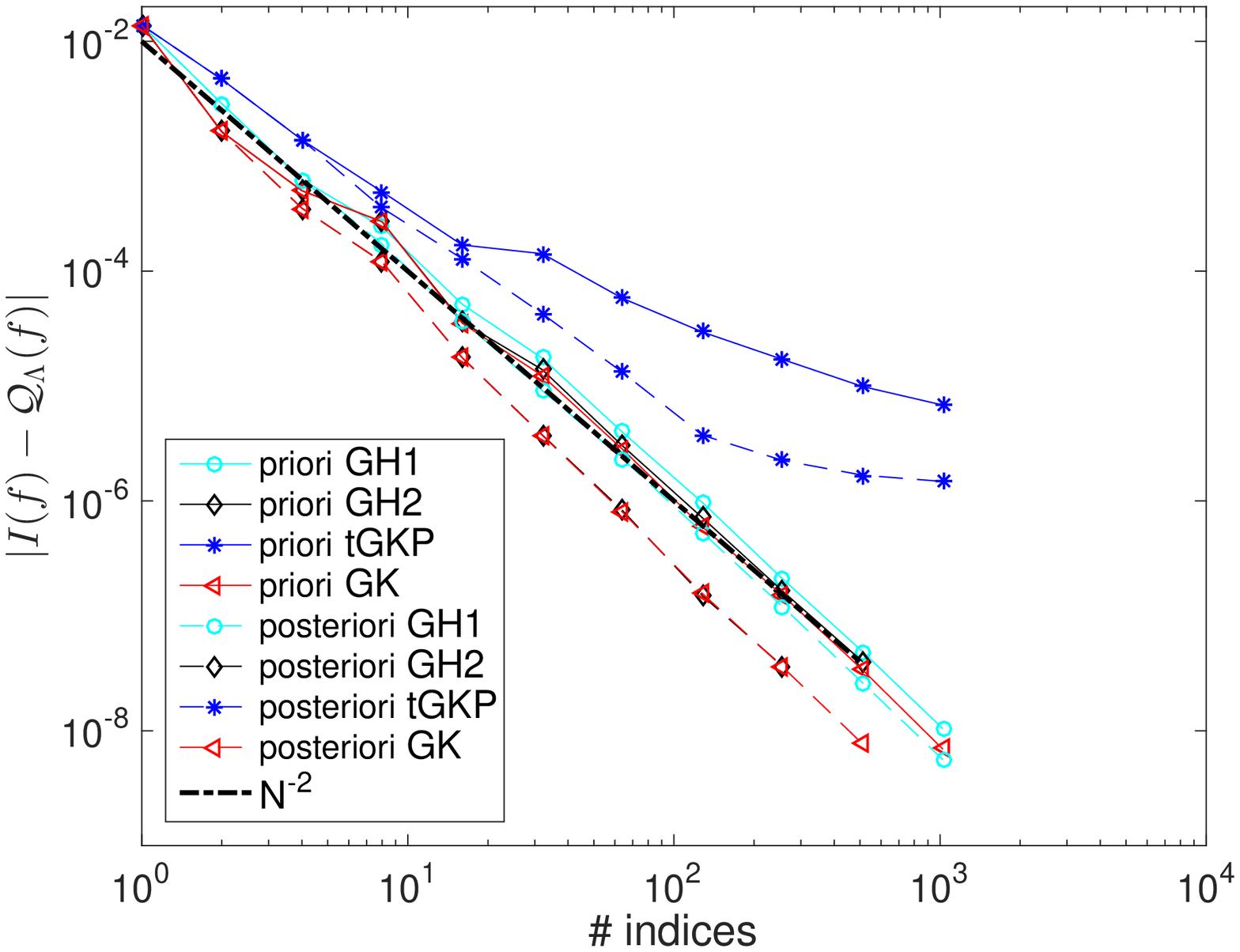}
\hspace*{0.2cm}
\includegraphics[scale=0.34]{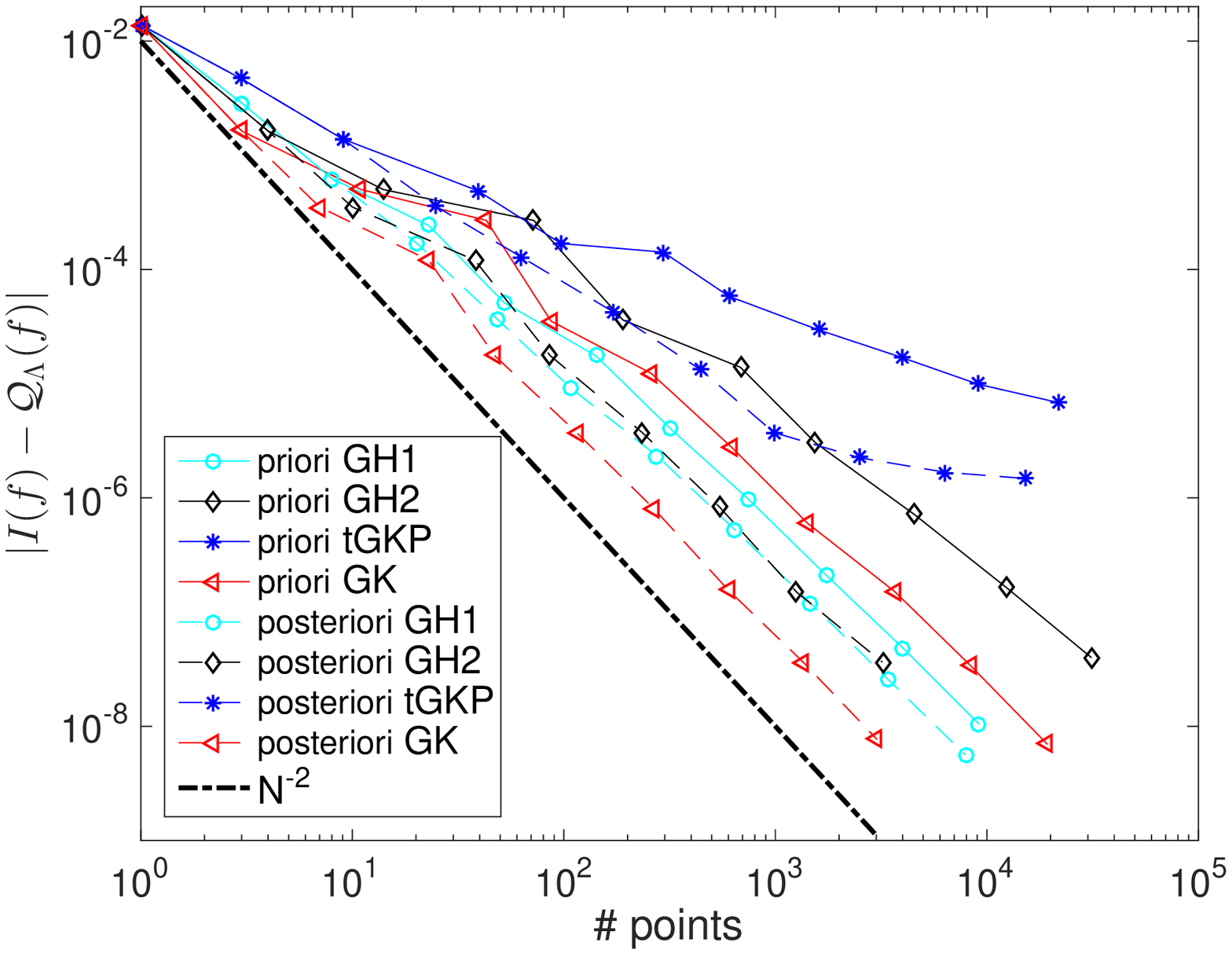}
\end{center}
\caption{Decay of quadrature errors $|I(f) - \cQ_{\Lambda}(f)|$ with respect to the number of indices (left) and the number of points (right) in $\Lambda$. Reported are for the different quadrature rules constructed by both the a-priori and the a-posteriori construction Algorithm \ref{alg:SparseQuad}. $\alpha = 2$. Top: $f_1$; bottom: $f_2$.}\label{fig:OldPrioriPosterioriPDE}
\end{figure}

Fig. \ref{fig:OldPrioriPosterioriPDE} displays the convergence of the quadrature errors of the two moments $\bbE[f_1]$ and $\bbE[f_2]$ with respect to the number of indices and points in the index set $\Lambda$, where we compute the error by 
\beq 
|I(f) - \cQ_{\Lambda}(f)| \approx |\cQ_{\bar{\Lambda}_{\text{max}}}^{\text{GK}}(f) - \cQ_{\Lambda}(f)|\;.
\eeq
Here $\cQ_{\bar{\Lambda}_{\text{max}}}^{\text{GK}}(f)$ is the approximation of $I(f)$ by the a-posteriori GK quadrature at the largest index set $\bar{\Lambda}_{\text{max}} = \Lambda_{\text{max}} \cup \cN(\Lambda_{\text{max}})$ with about $10^5$ quadrature points. GK quadrature is used since it is more accurate for this test example as shown in Fig. \ref{fig:OldPrioriPosterioriPDE}. 
Moreover, the number of activated dimensions in $\Lambda$, for which the maximum grid level is larger than $1$ in $\Lambda \cup \cN(\Lambda)$, is smaller than the number of the full dimensions for all quadrature rules, in particular smaller than the number of dimensions activated by the a-posteriori GK in $\bar{\Lambda}_{\text{max}}$,
%Moreover, 
%the number of activated dimensions in $\Lambda$ where the maximum grid level reaching 1 or over is smaller than the number of the full dimensions $J = 1023$, 
see Fig. \ref{fig:levelPDE}, which indicates that the quadrature errors computed for the indices and the points in $\Lambda$ are unbiased and the convergence rate is dimension-independent. From the decaying of the quadrature errors, we can observe the dimension-independent convergence rate about $N^{-s}$ with $s = 2$ with respect to the number of both indices and points in $\Lambda$, for both quantities of interest $f_1$ and $f_2$. 
Again, GK quadrature turns out to be the most accurate and tGKP is the least with the same number of quadrature points.
The a-priori construction gives less accurate quadrature results compared to the a-posteriori construction, in particular for GH2, tGKP, and GK as explained in the last section. We remark that the same index set has been constructed for both $f_1$ and $f_2$ by the a-priori construction, while by the a-posteriori construction, the index sets for the two quantities are different. 
This can be illustrated by Fig. \ref{fig:levelPDE}, where the maximum level in each dimension is the same for $f_1$ and $f_2$ by the a-priori construction and different by the posteriori construction, see the comparison of GH1 and GK for the two quantities. 
Therefore, the same index set can be used for different quantities of interest (with the same $(\tau_j)_{j\geq 1}$) once constructed by the a-priori scheme. On the other hand, the posteriori scheme requires a complete reconstruction of the index set for each new quantity of interest.

\begin{figure}[!htb]
\begin{center}
\includegraphics[scale=0.34]{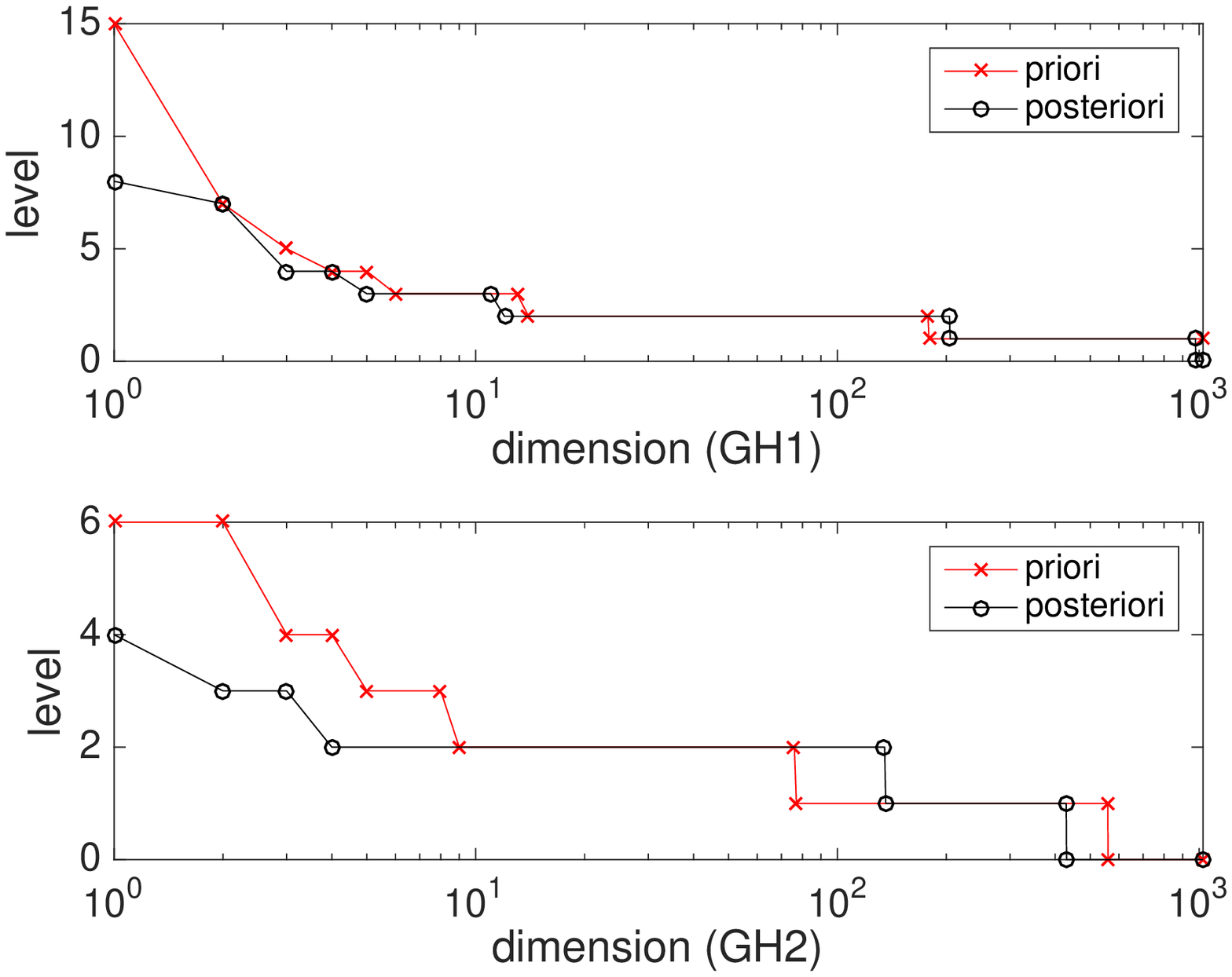}
\hspace*{0.5cm}
\includegraphics[scale=0.34]{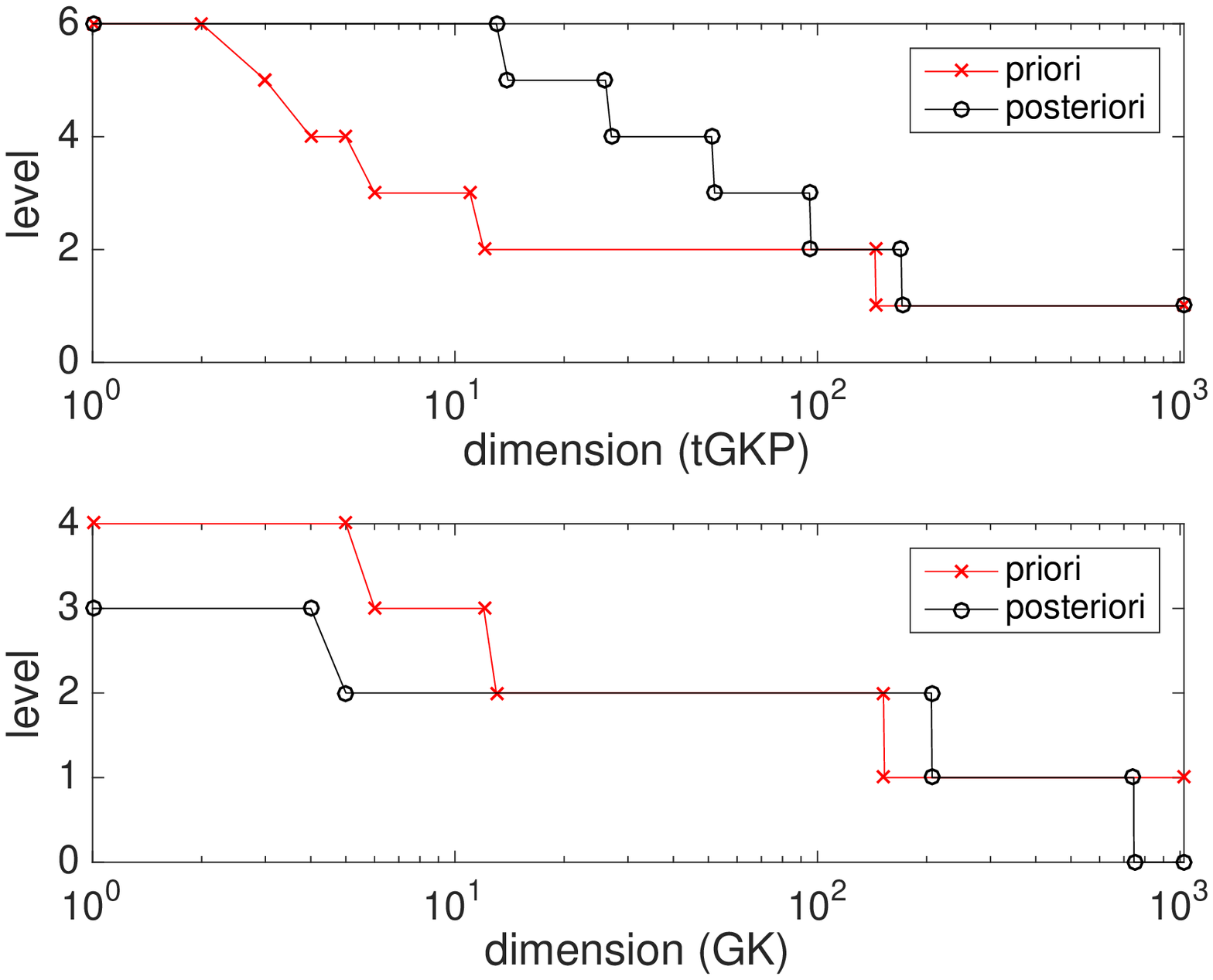}

\vspace*{0.2cm}

\includegraphics[scale=0.34]{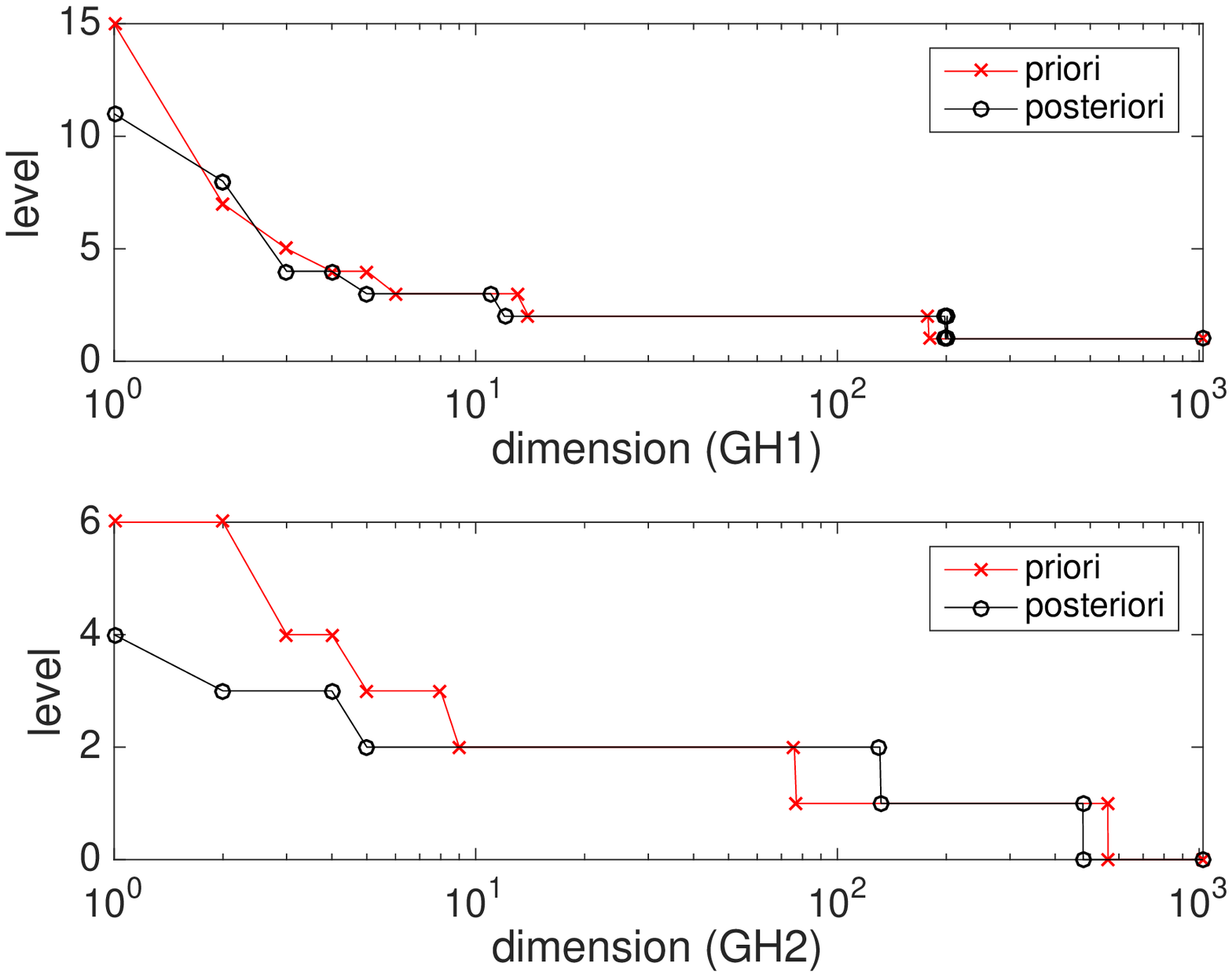}
\hspace*{0.5cm}
\includegraphics[scale=0.34]{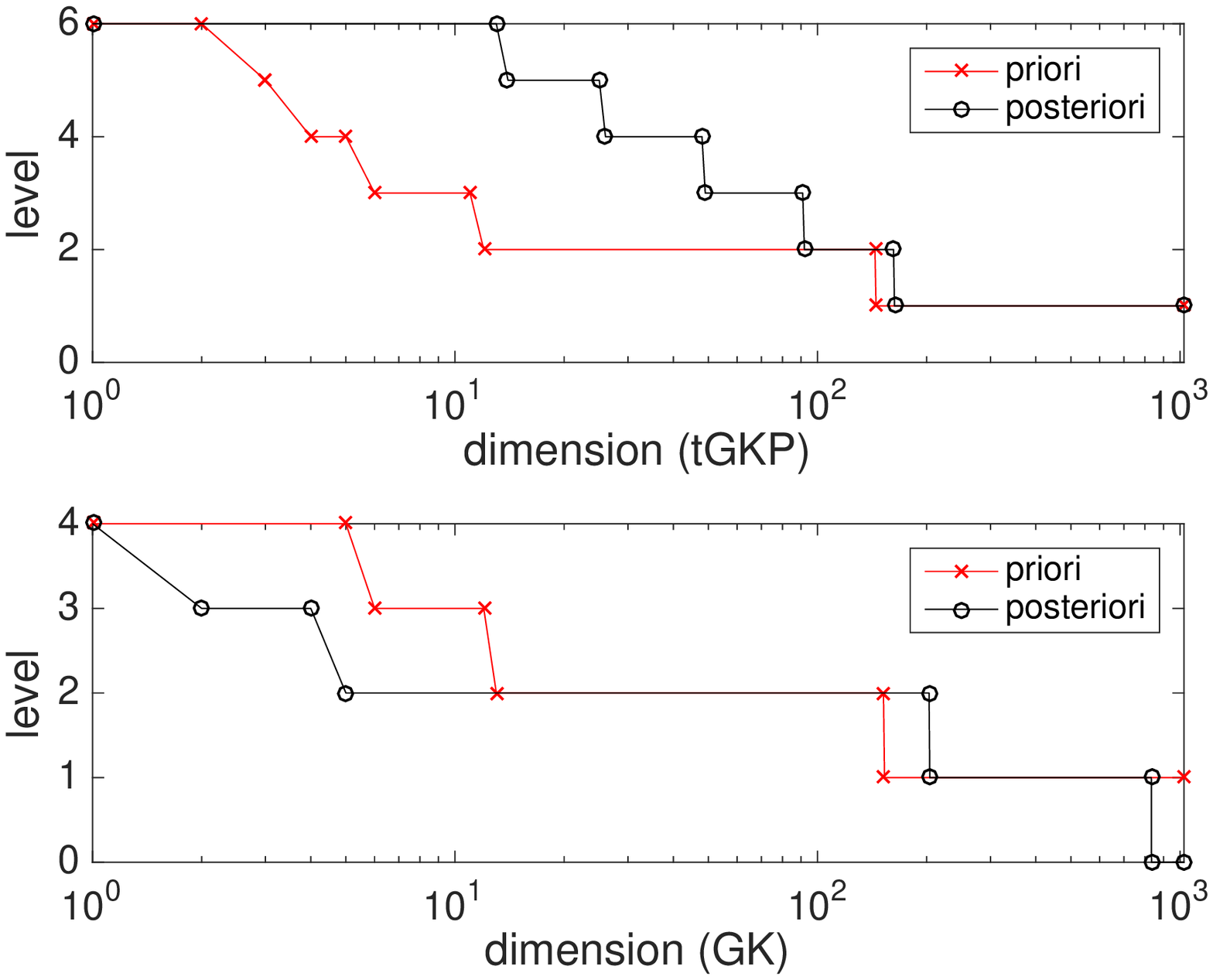}
\end{center}
\caption{Maximum level ($\max_{\bsnu \in \Lambda\cup \cN(\Lambda)} \nu_j$, $j = 1, \dots, 1023$) in each dimension constructed by the a-priori and the a-posteriori schemes for the four quadrature rules. $\alpha = 2$. Top: $f_1$; bottom: $f_2$.}\label{fig:levelPDE}
\end{figure}

\begin{figure}[!htb]
\begin{center}
\includegraphics[scale=0.34]{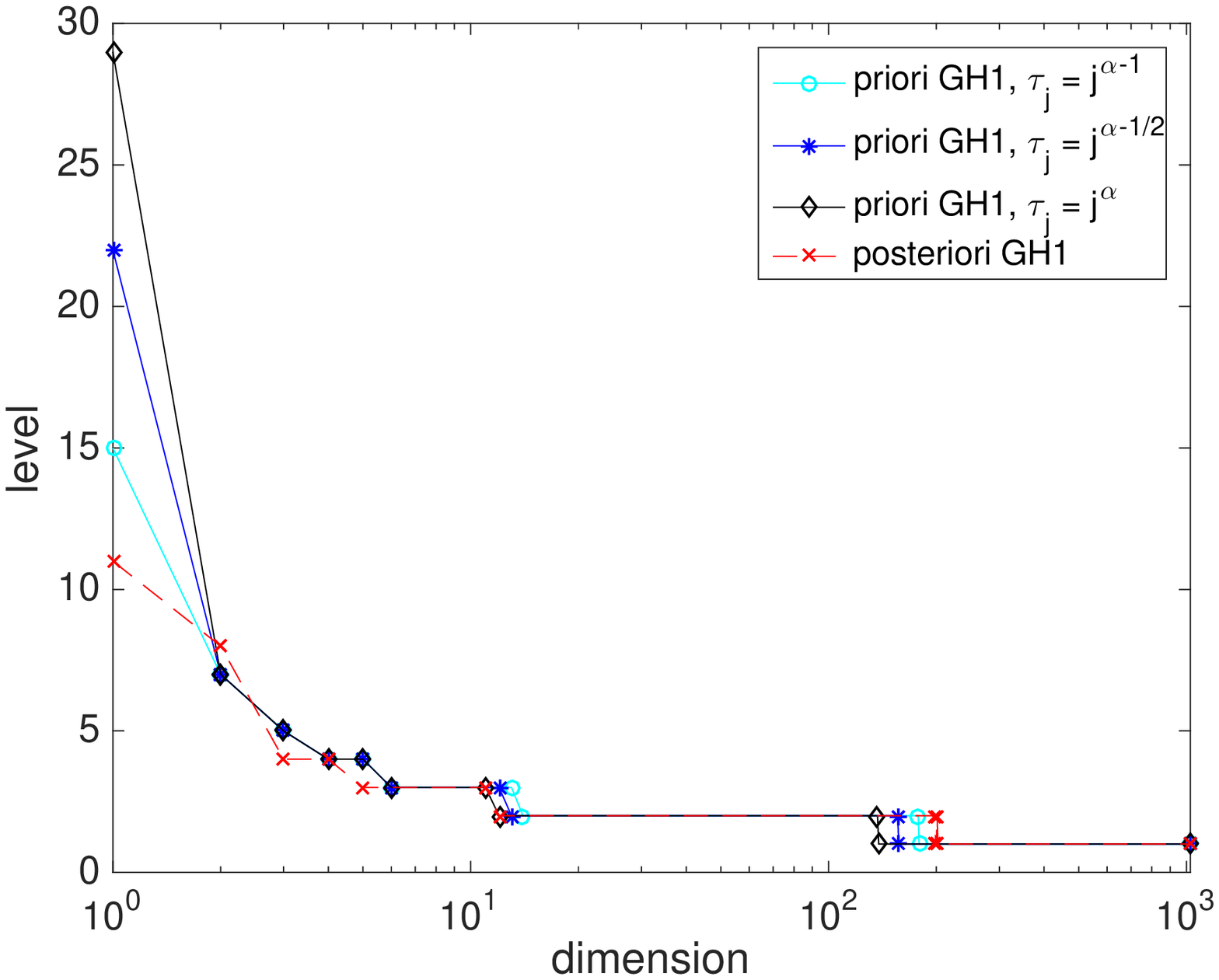}
\hspace*{0.2cm}
\includegraphics[scale=0.34]{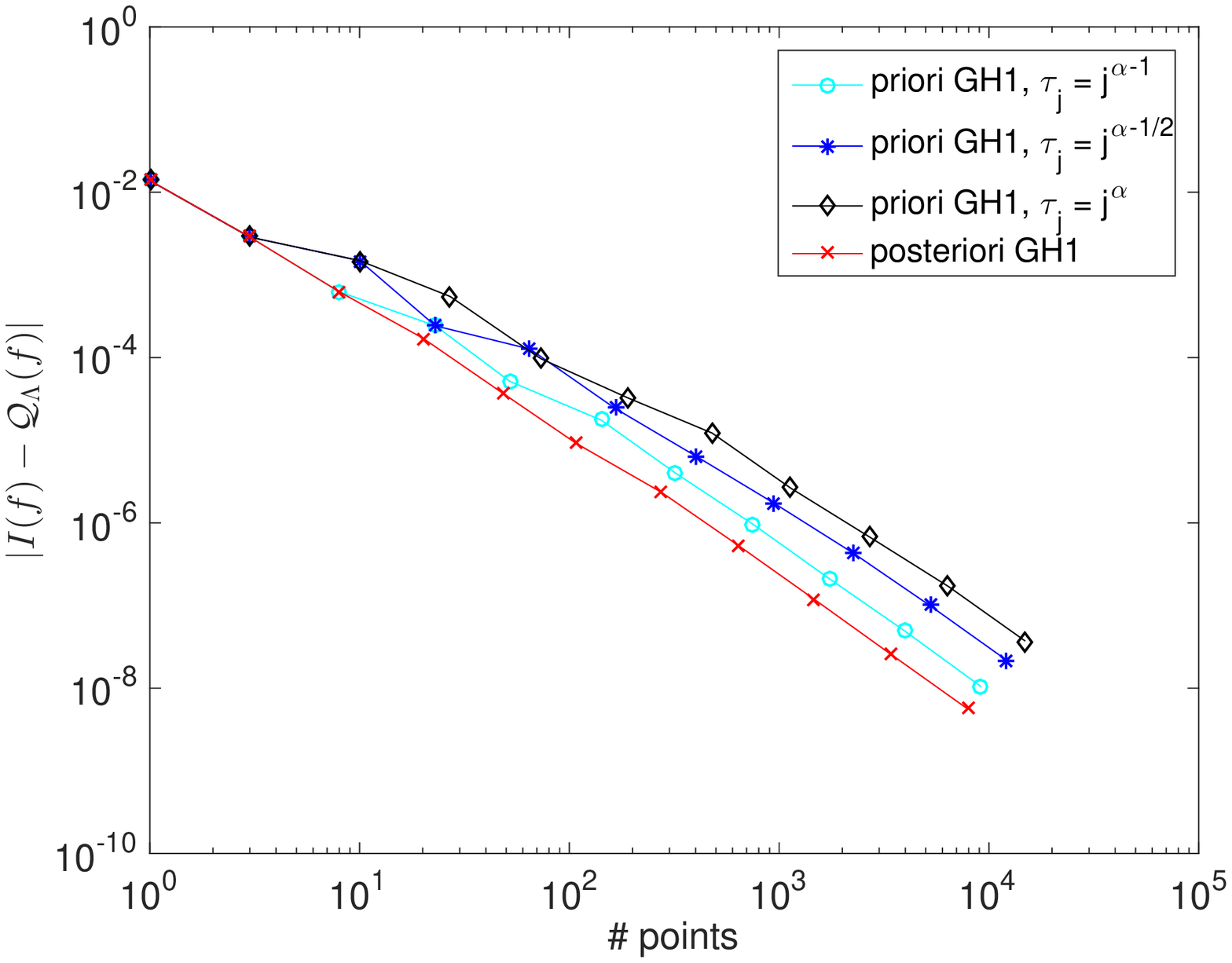}
\end{center}
\caption{Left: maximum level ($\max_{\bsnu \in \Lambda\cup \cN(\Lambda)} \nu_j$, $j = 1, \dots, 1023$) in each dimension constructed by the a-priori scheme with different $(\tau_j)_{j\geq 1}$ and the a-posteriori scheme, all using GH1. Right: the corresponding sparse quadrature errors. $\alpha = 2$.}\label{fig:CompareTau}
\end{figure}

Note that with $\tau_j = j^{\alpha - 1}$, i.e., $(\tau^{-1}_j)_{j\geq 1} \in \ell^q(\bbN)$ for $q > 1/(\alpha - 1)$, the numerical convergence about $N^{-s}$ with $s = 2$ is faster than the convergence of $N^{-s}$ with $s = 1/q - 1/2 < \alpha - 3/2 = 1/2$ according to Theorem \ref{thm:N-termConv}. However, as the choice $\tau_j = j^{\alpha - 1}$ might be only a sufficient condition for the Assumption \ref{ass:DeriBound}, so we may numerically relax it. Here we also test $\tau_j = j^{\alpha - 1/2}$ and $\tau_j = j^{\alpha}$. The maximum level in each dimension and the convergence of the quadrature errors are shown in Fig. \ref{fig:CompareTau} for the a-priori construction with GH1. We can see that the three choices of $\tau_j$ produce a very close convergence rate $N^{-s}$ with $s = 2$, though $\tau_j = j^{\alpha - 1}$ leads to more accurate quadrature than $\tau_j = j^{\alpha - 1/2}$ and $\tau_j = j^{\alpha}$. The maximum levels from the three choices are also the same except in a small number of dimensions. 

\begin{figure}[!htb]
\begin{center}
\includegraphics[scale=0.34]{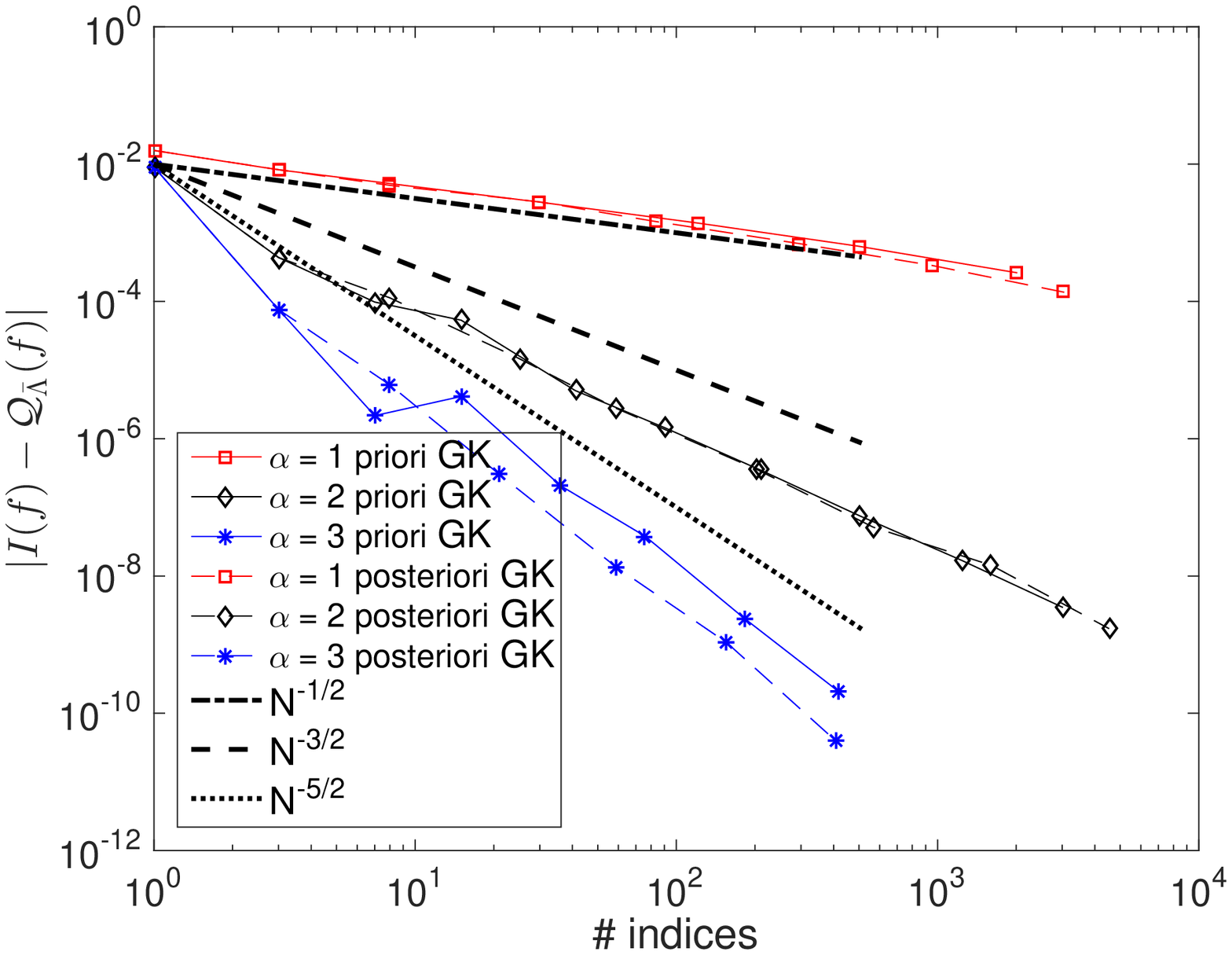}
\hspace*{0.2cm}
\includegraphics[scale=0.34]{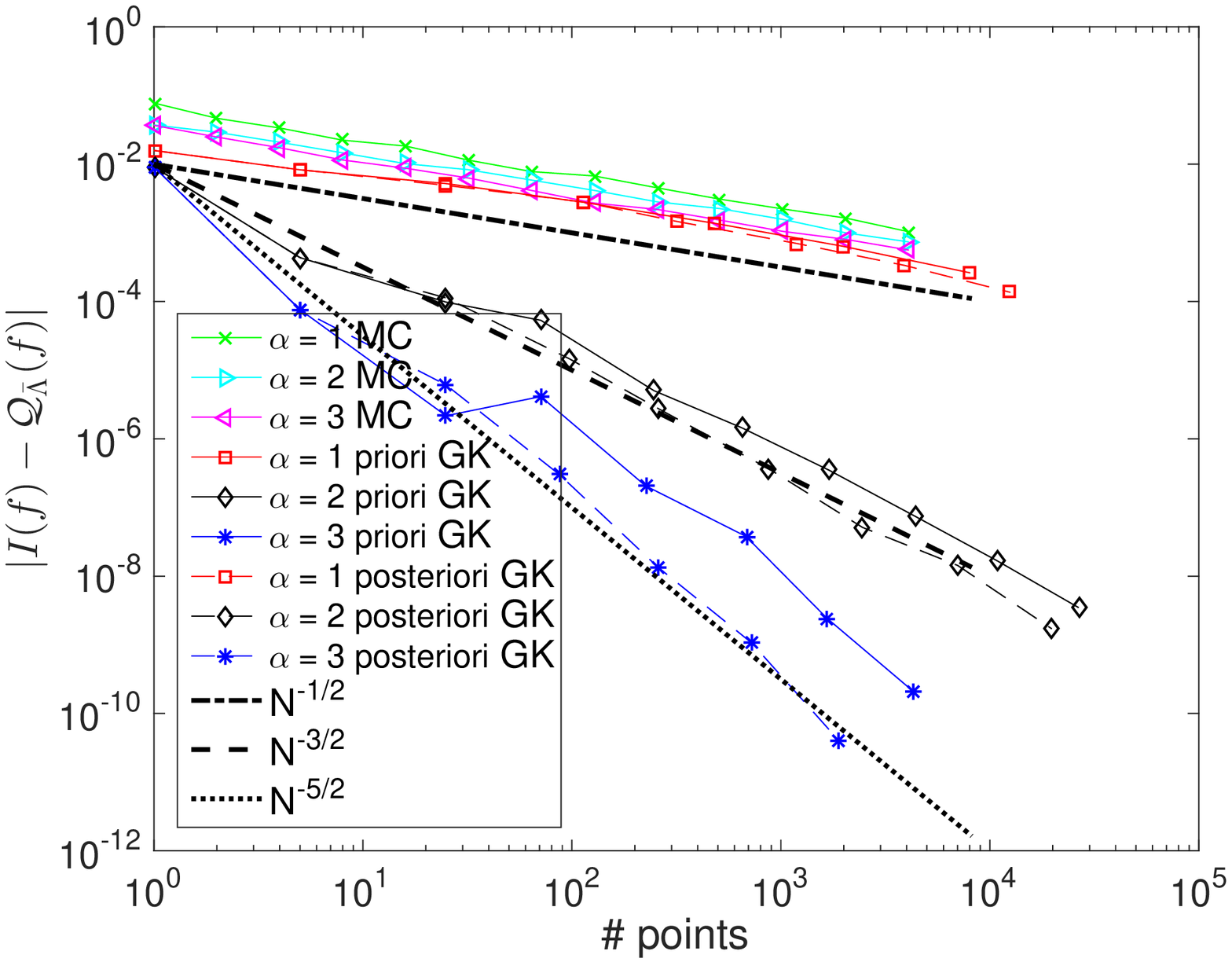}

\vspace*{0.2cm}

\includegraphics[scale=0.34]{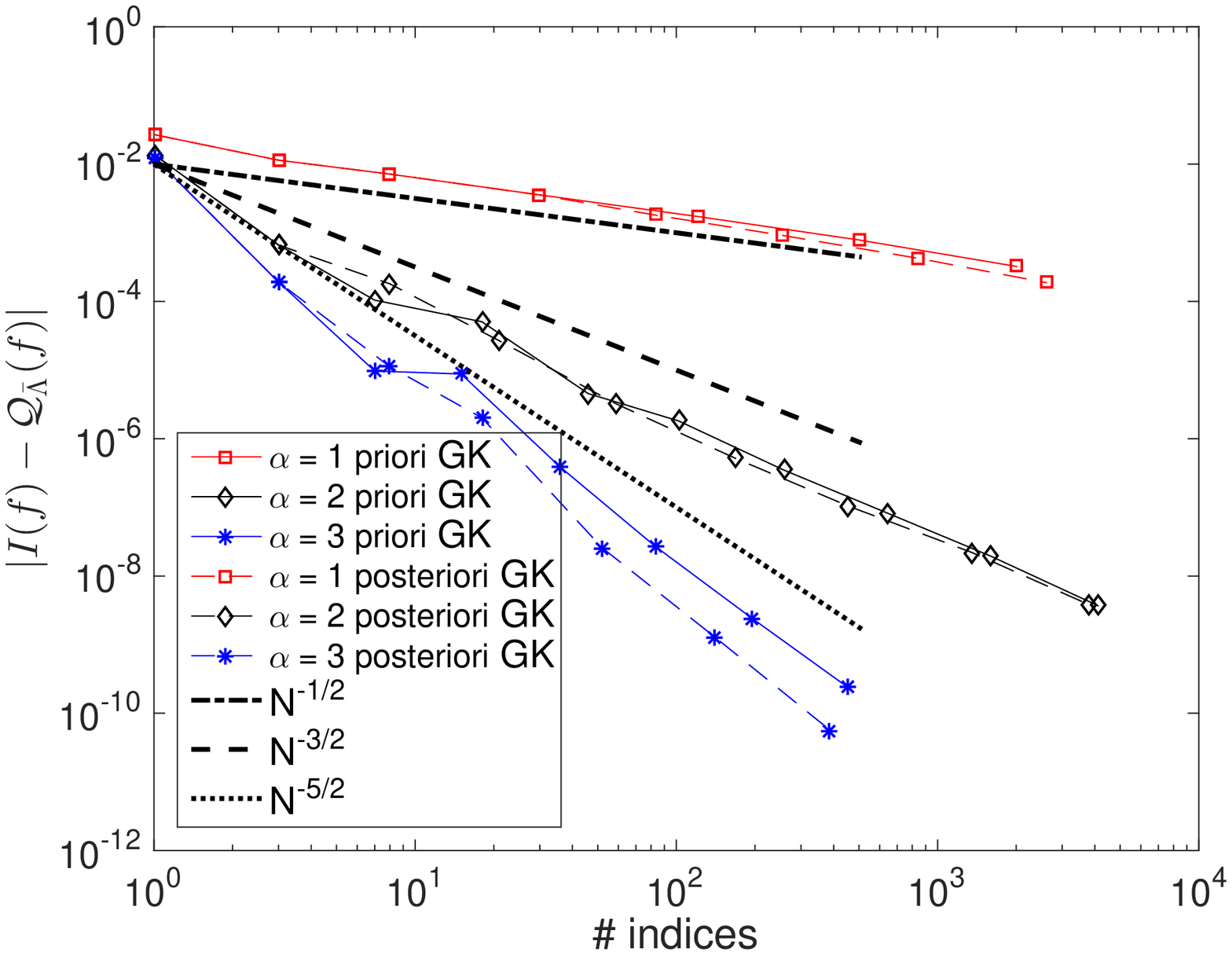}
\hspace*{0.2cm}
\includegraphics[scale=0.34]{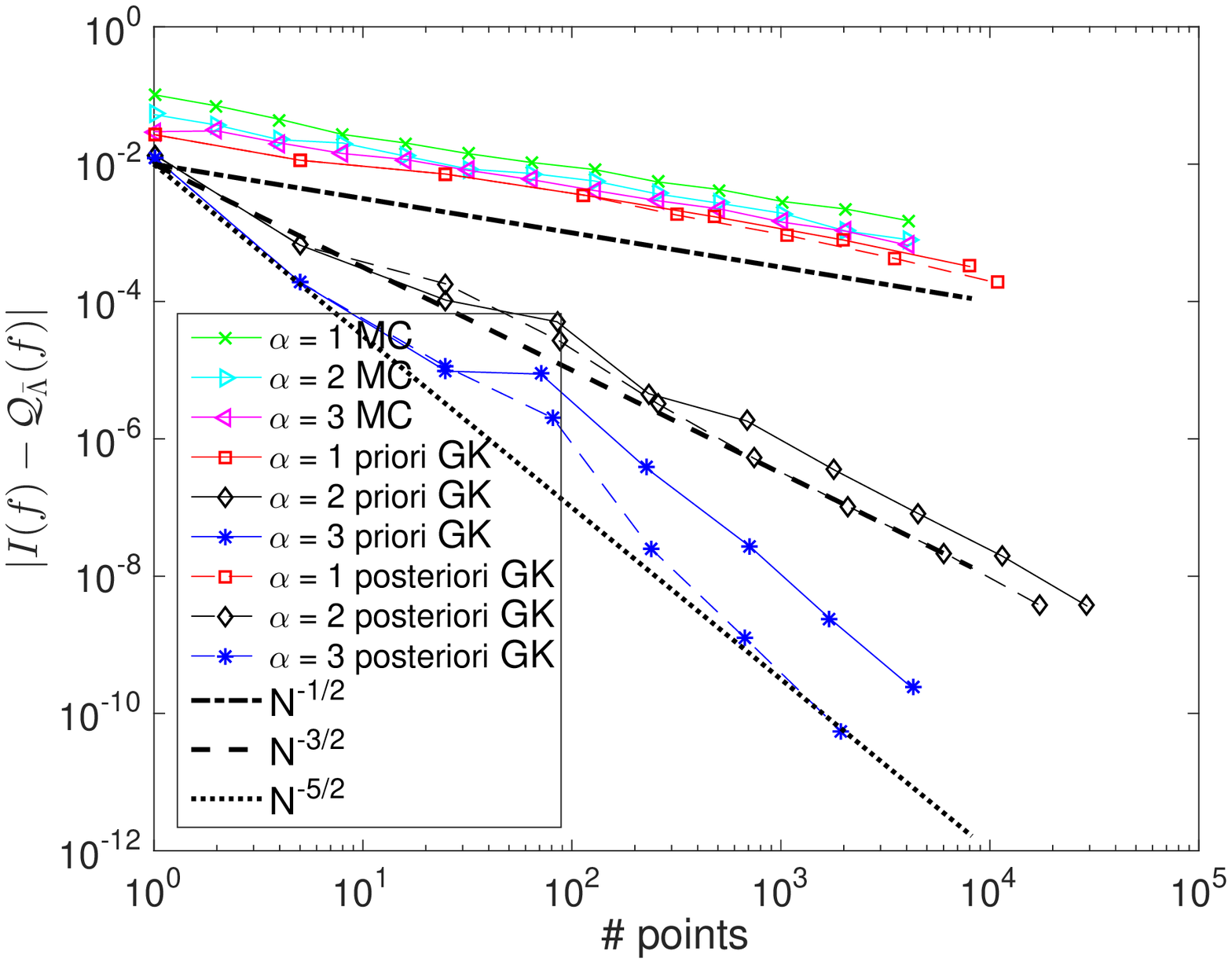}
\end{center}
\caption{Decay of quadrature errors $|I(f) - \cQ_{\bar{\Lambda}}(f)|$ with respect to the number of indices (left) and points (right) in $\bar{\Lambda} = \Lambda \cup \cN(\Lambda)$. Results are shown for both the a-priori and the a-posteriori construction schemes with the MC and the GK quadrature rules. Top: $f_1$; bottom: $f_2$.}\label{fig:AllPrioriPosterioriPDE}
\end{figure}

Finally, in Fig. \ref{fig:AllPrioriPosterioriPDE} we report the decaying of the sparse quadrature errors for both $\bbE[f_1]$ and $\bbE[f_2]$ with respect to both the number of indices and the number of points in the union set $\bar{\Lambda} = \Lambda \cup \cN(\Lambda)$, which correspond to the total computational cost. We use the most accurate GK quadrature rule and test $\alpha = 1, 2, 3$. The convergence rate about $N^{-s}$ with $s = \alpha - 1/2$ can be observed for all $\alpha$ and for both the a-priori construction and the a-posteriori construction, which indicates that the convergence rate only depends on the sparsity parameter $\alpha$, and is much higher than the Monte Carlo convergence rate $N^{-1/2}$ for $\alpha = 2, 3$. In the case $\alpha = 1$, the sparse quadrature errors converge with rate about $N^{-1/2}$ and is smaller than that of Monte Carlo quadrature errors, which are computed as the average of 100 trials.

\section{Conclusion}
\label{sec:Conclusion}
In this work, we analyzed the dimension-independent convergence property of an abstract sparse quadrature for high-dimensional integration with Gaussian measure under certain assumptions on the univariate quadrature rules and the regularity of the parametric function with respect to the parameters, which established the foundation of efficient algorithms to break the curse of dimensionality commonly faced by a class of high and infinite-dimensional integration problems. We presented both a-priori and a-posteriori construction schemes for numerical integration. Moreover, we investigated the a-priori and the a-posteriori construction schemes with four kinds of different univariate quadrature rules and studied their convergence properties through numerical experiments on a nonlinear parametric function and a nonlinear parametric PDE. The numerical results demonstrate that the convergence rates of the quadrature errors do not depend on the number of dimensions but only on some parameter related to the regularity of the parametric function. This conclusion holds not only for the convergence of the quadrature errors with respect to the number of the indices in the admissible index set as stated in the main theorem, but also for that with respect to the total number of quadrature points corresponding to the union of the admissible index set and its forward neighbor set, i.e., with respect to the total number of function evaluations or PDE solutions. The convergence of the sparse quadrature errors (with rate $N^{-s}$) is faster than the Monte Carlo quadrature errors (i.e., $s > 1/2$) in all the numerical examples with sufficiently large $\alpha$ (or small $q$) which indicates the regularity of the parametric function. 
%In the worst case where $\alpha$ is such that the function is merely well-defined, the sparse quadrature errors still decay as fast as, and are smaller than, the Monte Carlo quadrature errors for our test example. 
The numerical convergence rates in the examples are larger than those of the theoretical prediction in the main theorem, which indicate that the latter may not be optimal. How to improve the theoretical convergence rate is worthy to investigate. Further work on the development and the application of the sparse quadrature in solving high-dimensional integration problems in different areas, such as Bayesian inverse problems \cite{chen2016hessian} and optimization under uncertainty, are interesting and promising. Moreover, comparison of the sparse quadrature with a type of quasi-Monte Carlo quadrature \cite{graham2015quasi, kuo2015multilevel} is interesting for high-dimensional integration with Gaussian measure.

\bibliographystyle{plain}
\bibliography{bibliography.bib}

\end{document}